\PassOptionsToPackage{dvipsnames}{xcolor}
\documentclass[a4paper, 11pt]{article}

\usepackage{times, a4wide}
\usepackage{amsmath, amssymb, amsthm, mathtools, thmtools}
\usepackage{tikz}
\usepackage{enumerate}
\usepackage{dsfont}
\usepackage[OT1]{fontenc}
\usepackage{bm}
\usepackage{centernot}
\usepackage{hyperref}
\hypersetup{colorlinks=true,citecolor=blue,linkcolor=blue,urlcolor=blue}

\usepackage[dvipsnames]{xcolor}

\newtheorem{theorem}{Theorem}
\newtheorem{definition}[theorem]{Definition}

\newtheorem{corollary}[theorem]{Corollary}
\newtheorem{observation}[theorem]{Observation}
\newtheorem{lemma}[theorem]{Lemma}

\newtheorem*{cordecomp}{Lemma~\ref{cor:decomp}}
\newtheorem*{lemnonrecoalt}{Lemma~\ref{lem:nonrecoalt}}
\newtheorem*{lemdegreesum}{Lemma~\ref{lem:degree-sum}}
\newtheorem*{lemmarginal}{Lemma~\ref{lem:marginal}}
\newtheorem*{lemPoissonneig}{Lemma~\ref{lem:Poissonneig}}
\newtheorem*{lemspectologSob}{Lemma~\ref{lem:spectologSob}}

\theoremstyle{definition}
\newtheorem{remarkone}[theorem]{Remark}

\newcommand{\Poisson}{\mathsf{Poi}}

\newcommand{\pl}{\textup{\texttt{+}}}
\newcommand{\mi}{\textup{\texttt{-}}}
\newcommand{\ord}{\mathrm{ord}}
\newcommand{\dis}{\mathrm{dis}}

\newcommand{\good}{\mathrm{good}}
\newcommand{\bad}{\mathrm{bad}}

\renewcommand{\P}{\mathbb{P}}
\newcommand{\emm}{\mathrm{e}}

\DeclareMathOperator{\Var}{Var}
\DeclareMathOperator{\Ent}{Ent}
\DeclareMathOperator{\E}{\mathbb{E}} 

\def\Vin{V_{\mathrm{in}}}

\newcommand{\cE}{\mathcal{E}}
\newcommand{\cT}{\mathcal{T}}
\newcommand{\cS}{\mathcal{S}}
\newcommand{\trel}{t_{\mathrm{rel}}}
\newcommand{\trelct}{t_{\mathrm{rel-ct}}}
\def\p{\mathbf{p}}

\let\epsilon = \varepsilon
\newcommand{\poly}{\mathrm{poly}}
\renewcommand{\Pr}{\P}
\def\good{\mathrm{good}}

\def\tree{\mathbb{T}}
\def\down{\mathrm{down}}

\def\Downg{\mathcal{D}_\good}
\def\Downgbig{\mathcal{D}_{\good,\geq D}}
\def\Downb{\mathcal{D}_\bad}
\def\Downgsmall{\mathcal{D}_{\good,< D}}
\def\dom{\mathrm{dom}}

\title{Fast Mixing for Low-Temperature Potts Models via Poisson Trees}

\author{Zongchen Chen\thanks{School of Computer Science, Georgia Institute of Technology, Atlanta, GA, USA.} \and Andreas Galanis\thanks{Department of Computer Science, University of Oxford, Oxford, UK.} \and Leslie Ann Goldberg\footnotemark[2] \and Xandru Mifsud\textsuperscript{\S}\footnotemark[2] \and Paulina Smolarova\footnotemark[2] \and Xusheng Zhang\textsuperscript{\S}\thanks{Department of Computer Science, Durham University, Durham, UK.}}
\date{July 22, 2026}

\begin{document}

\maketitle

\begingroup
\renewcommand{\thefootnote}{\S}
\footnotetext{This work was supported by the Clarendon Fund Scholarship awarded to Xandru Mifsud, and the EPSRC New Investigator Award UKRI155 awarded to Xusheng Zhang.}
\endgroup

\begin{abstract}
The $q$-state ferromagnetic Potts model on a graph $G$ is a probability distribution on the set of all $q$-colourings of $G$ that favours configurations with many monochromatic edges. Approximate sampling from the Potts model is a central algorithmic problem in the study of spin systems on sparse graphs. Of particular interest is the so called low-temperature regime, where the model strongly favours ordered configurations (in which one of the colours dominates), often creating bottlenecks that make Markov-chain sampling inefficient or difficult to analyse.

Key to understanding Markov chains on large sparse graphs is how they behave on the local neighbourhood of a vertex, under the influence of the surrounding boundary condition. A well-studied  example is the random regular graph, where  local neighbourhoods are captured by regular trees; the effect of boundary conditions on such trees and their connection to mixing time is by now well~understood.

Here, we focus instead on the sparse random graph $G(n,d/n)$. The local neighbourhoods of $G(n,d/n)$ are also  tree-like, but the relevant underlying graph is a Poisson Galton--Watson tree. This motivates the study of Glauber dynamics for the low-temperature Potts model on such trees with monochromatic boundary conditions. The Poisson setting introduces difficulties that are absent from the regular case: degrees fluctuate, long induced paths may appear, and branches can terminate before reaching the boundary. As a result, the effect of the monochromatic boundary at the leaves is much less uniform.

Our main  result shows near-linear mixing for the Glauber dynamics on Poisson trees with monochromatic boundary conditions. This extends the corresponding regular-tree results of Martinelli, Sinclair, and Weitz (SODA 2004) and of Blanca, Chen, Stefankovič, and Vigoda (RANDOM 2021) to the irregular trees arising from sparse random graphs. Our proof introduces an adaptive block decomposition of the tree, built around regions containing large regular subtrees, and combines it with correlation-decay estimates  and functional-inequality arguments.

As an application, we obtain a near-linear-time approximate sampling algorithm for the Potts model on $G(n,d/n)$ at all temperatures, speeding up  significantly the best previous algorithm of Galanis, Goldberg, and Smolarova (ICALP 2025). The main new ingredient is a refined analysis of the low-temperature regime, building on the Poisson tree result. On the way, we prove a simple lifting lemma, based on the Edwards--Sokal  coupling,  that transfers entropy-factorisation bounds from the Potts model to the random-cluster model on arbitrary graphs of max degree $\Delta$.
\end{abstract}

\section{Introduction}
The  Potts model is one of the central spin systems in probability, statistical physics, and theoretical computer science, and it may be viewed as a soft-constraint generalization of graph colouring.
We study sampling from the Potts model on sparse graphs. 
Given a graph $G=(V,E)$, an integer $q\ge 2$, and a parameter $\beta>0$, the Potts distribution $\mu=\mu_{G;q,\beta}$ assigns to each colouring $\sigma\in [q]^V$ (not necessarily proper) weight
\[
\mu(\sigma) = \frac{1}{Z}\exp\Bigl(\beta \sum_{\{u,v\}\in E}\mathbf{1}\{\sigma_u=\sigma_v\}\Bigr)
,
\]
where $Z$ is the normalizing constant that makes $\mu$ a probability distribution.
The parameter $\beta$ is known as the inverse temperature; note that, in the so-called low-temperature regime, that corresponds to having $\beta$ large, the distribution favours colourings $\sigma$ with many monochromatic~edges.

For $\varepsilon>0$, we say an algorithm outputs an $\varepsilon$-sample from $\mu$ if its output distribution is within $\varepsilon$ of $\mu$ in total variation distance.  Markov chains are natural algorithms for approximate sampling from Gibbs distributions, and the Potts model in particular. 
The most natural chain, and the one studied in this paper, 
is the Glauber dynamics which starts at an arbitrary $\sigma^0\in [q]^V$ and at times $t=1,2,\hdots$ obtains $\sigma^{t+1}$ from $\sigma^t$ by updating the colour of a single vertex (keeping the others the same). It does this by choosing a vertex $v\in V$ uniformly at random and sampling $\sigma^{t+1}_v$ according to the conditional distribution induced by its neighbours. We are interested in the mixing time $T_{\mathrm{mix}}(\epsilon)$ of the chain
which, roughly, captures the time $t$ needed to have an $\epsilon$-sample $\sigma_t$  from $\mu$.

For $q>2$, the sampling problem is already subject to hardness barriers at low temperatures due to certain phase transition phenomena that in the worst case lead to intractability \cite{GalanisStefankovicVigodaYang2016,GoldbergJerrum2012}. So, to obtain sampling results at low temperatures it is natural to focus on more structured graphs.  Here we focus on sparse random graphs that exhibit many of the relevant phase transitions that make the problem computationally hard, but nevertheless progress can be made via more delicate approaches.

As a running example, we will use the   $G(n,d/n)$ random graph for constant real $d>1$ but analogous results also hold for example for the random $d$-regular graph (when $d\geq 3$ is an integer). While at high-temperatures (low $\beta$) a typical colouring from the Potts distribution is close to uniform (in the sense that the colour counts are roughly $n/q$), at low temperatures (high $\beta$) the Potts distribution is dominated by $q$ modes known as ordered phases, that roughly correspond to the $q$ monochromatic colourings. This switch from unimodal to multimodal causes Glauber exponential time to mix since its very unlikely to move from one monochromatic phase to another using single-vertex updates; this phenomenon has been detailed for the random regular graph in \cite{helmuth2023finite,CojaOghlanGalanisGoldbergRavelomananaStefankovicVigoda2023}.

Nevertheless, it is conceivable that Glauber mixes well within each of the phases, and there has been a lot of progress in recent years capturing the underlying mechanics. The key idea here is to understand how Glauber behaves on the local neighbourhood of a vertex, under the influence of the surrounding monochromatic (or almost monochromatic) condition, and relating this to the true evolution. For sparse random graphs, the neighbourhoods resemble trees so the natural object to study here is the behaviour of Glauber on trees where the leaves are conditioned to have the same colour.

This local neighbourhood approach has led to fast sampling algorithms at low temperatures that apply when $q$ is relatively large to $d$, see \cite{BGPZ2026, Galanis2026PlantingModel} for the state-of-the-art results on the random $d$-regular graph and \cite{GGS} for the $G(n,d/n)$. One of the key components in these works is the use of mixing time results that are known on the underlying tree structure under the monochromatic boundary condition. In the case of the $d$-regular tree (which is the relevant tree for the random $d$-regular graph), the required pieces are very well understood by a classical work of Martinelli, Sinclair and Weitz \cite{MSW04} and a more recent follow-up by Blanca, Chen, \v{S}tefankovic, and Vigoda~\cite{blanca2023swendsen}.

By contrast, in the case of $G(n,d/n)$ which is our main concern here, the known mixing time results on the underlying tree is far from an analogous understanding and have so far been based on crude arguments that unfortunately lead to loose mixing time bounds. Indeed, the running time of the algorithm in \cite{GGS} scales as $n^{O(\log q)}$ at low-temperatures $\beta$ as a result of the suboptimal bounds. One of the key reasons behind this difficulty is that, instead of a single tree as in the regular case, the relevant trees in $G(n,d/n)$ have a non-trivial distribution, captured by a Galton--Watson tree with Poisson offspring distribution with parameter $d$; for simplicity we refer to this distribution as the Poisson tree with parameter $d$.

Our main contribution is to obtain fast mixing results on Poisson trees, which in turn leads to a significant speed-up of the running time in the algorithm in \cite{GGS}, namely from $n^{O(\log q)}$ down to $n^{1+\delta}$ for any arbitrarily small $\delta>0$. We start by first stating  the application on $G(n,d/n)$. Here, ``whp'' is a shorthand for ``with probability $1-o(1)$ as $n$ grows large''.
\begin{theorem}\label{thm:main2}
Let $\delta>0$ be arbitrarily small and $d$ be a large enough real. Then, for all sufficiently large integers $q$ and any real $\beta>0$, there is a randomised algorithm $\mathcal{A}$ such that the following holds whp over $G\sim {G}(n,d/n)$.

    On input $G$ and any $\epsilon\geq \mathrm{poly}(1/n)$, the algorithm $\mathcal{A}$ outputs an $\varepsilon$-sample from the Potts distribution $\mu_{G;q,\beta}$ in time $O( n^{1+\delta})$.
\end{theorem}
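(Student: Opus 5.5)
The plan is to follow the architecture of the algorithm of \cite{GGS} and replace only the weak ingredient, namely the mixing bound on trees with monochromatic boundary. The range of $\beta$ splits into regimes. For $\beta$ below the uniqueness/ordering threshold (high temperature), I would invoke the known fast-mixing results for Glauber or Swendsen--Wang on $G(n,d/n)$ at high temperature via spectral independence or entropy factorisation. These already give near-linear time for $q$ large relative to $d$, after the standard treatment of high-degree vertices through block dynamics on their bounded-size neighbourhoods. For intermediate $\beta$, close to the order/disorder transition, I would reuse the random-cluster approach of \cite{GGS} as it stands. The only place where the $n^{O(\log q)}$ cost enters is the low-temperature regime, so that is where the new work goes.

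In the low-temperature regime I would proceed as follows. First, sample a uniformly random colour $c$ and run Glauber dynamics from the all-$c$ configuration, aiming to show that it mixes within the ordered phase in time $O(n^{1+\delta})$. By symmetry among the $q$ phases, and because the phases together carry all but $\mathrm{poly}(1/n)$ of the mass whp, this gives an $\varepsilon$-sample from $\mu$. Second, to analyse the restricted chain, I would use the local-neighbourhood strategy. Whp every radius-$R$ ball with $R=\Theta(\log n)$ is tree-like, with at most one cycle. Using a bad-vertex/good-vertex decomposition (vertices of unusually large degree or in dense regions), I would show that the ordered phase looks, on each ball, like the Potts model on a Poisson tree with monochromatic boundary, up to rare disagreements. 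Third, I would invoke the main Poisson-tree result announced in the abstract: near-linear mixing of Glauber on Poisson Galton--Watson trees with monochromatic boundary, in the form of an entropy-factorisation or log-Sobolev bound with constant $|T|^{1+o(1)}$ on whp-typical trees. Combined with spatial mixing inside the phase, a censoring or block-dynamics comparison would then give a block factorisation over balls. Gluing these balls yields a global entropy contraction for the phase-restricted chain, with overhead $n^{\delta}$ coming from $\log n$-depth trees whose degree fluctuations produce $\exp(O(\log n/\log\log n))$ factors.

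The step I expect to be the main obstacle is the gluing step. This is where the tree bound, which holds with the exact monochromatic boundary, has to be transferred to the actual graph, where the boundary is only approximately monochromatic and a few vertices of high degree or on short cycles break the tree structure. My first attempt would be a disagreement-percolation argument: show that disagreements with the all-$c$ boundary form subcritical clusters at low temperature when $q$ is large, and absorb each such cluster into a single block whose update cost is bounded through the lifting lemma on the random-cluster side. Time spent in the intermediate regime could also blow up, since there the bound of \cite{GGS} is merely polynomial. If it does, I would rerun it through the lifting lemma, transferring entropy factorisation from the Potts model to the random-cluster model on bounded-degree pieces, so that the same near-linear bounds apply to the Swendsen--Wang or random-cluster chains used there.
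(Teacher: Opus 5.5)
There is a genuine gap, and it sits where you put the ``main obstacle''. The proposal never gives a mechanism that turns a bound for the \emph{exact} monochromatic boundary on a Poisson tree into mixing of the ordered phase on $G$, and the tools you suggest do not work for Potts Glauber.

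\begin{itemize}
\item Censoring, and reducing arbitrary in-phase boundaries to an extremal one, both need a monotone system. The Potts model with $q\ge 3$ is not monotone.
\item No ``spatial mixing within the phase'' result is available for Potts Glauber.
\item Disagreement percolation with ad hoc blocks is an idea for an argument, not an argument.
\end{itemize}

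The missing idea is to stay in the random-cluster world for \emph{every} $\beta>\beta_0$, not only as a fallback in an intermediate window. There the model is monotone, the ordered phase is a single phase reached from $F=E$, and \cite[Theorem~26]{GGS} already does the gluing. That theorem assumes weak spatial mixing within the ordered phase, which \cite{GGS} proves by polymer methods; this is where $q\ge d^{\Omega(d)}$ comes from. Under it, the RC dynamics from $F=E$ is $\epsilon$-close to $\pi^{\mathrm{ord}}$ after $30|E|\max_v \frac{T_v}{|E(B_{v,h})|}\,O(\log\frac1\epsilon)$ steps. Here $T_v$ is the mixing time of the \emph{wired} RC dynamics on the ball $B_{v,h}$.

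So the only new input needed is $T_v\le |E(B_{v,h})|\,n^{o(1)}$ for all $v$, which is Lemma~\ref{lem:log-sob-RC-dynamics}. It is assembled in three steps:
\begin{itemize}
\item Theorem~\ref{thm:main1} gives a log-Sobolev bound for Potts on the Poisson tree.
\item The Edwards--Sokal lifting Lemma~\ref{lem:lifting} transfers entropy factorisation to the wired RC measure on that tree (Lemma~\ref{lem:conversion}).
\item Lemmas~\ref{lem:Poissonneig} and~\ref{lem:treelike}, plus a Dirichlet-form comparison, move the bound to $B_{v,h}$, which differs from a $\Poisson(\hat d)$ tree in $O(1)$ edges.
\end{itemize}
The lifting lemma is therefore the central bridge, not a fallback. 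The algorithm outputs an RC sample, which is converted to a Potts sample via Edwards--Sokal at the end.

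Your regime split is also misdiagnosed, in three ways.
\begin{itemize}
\item The polynomial bottleneck in \cite{GGS} is the ordered-phase mixing for all $\beta>\beta_0$, including the coexistence window $(\beta_0,\beta_1)$. Reusing \cite{GGS} ``as it stands'' there does not give near-linear time.
\item For $\beta\le\beta_0$ no spectral-independence input is needed; \cite{GGS} already shows RC dynamics from $F=\emptyset$ reaches $\pi^{\mathrm{dis}}$ in $n^{1+o(1)}$ steps for $\beta<\beta_1$. High-temperature results of the type you cite only reach roughly the uniqueness threshold $\approx \frac{\log q}{d}$, well below $\beta_0\approx(2-\frac{1}{10})\frac{\log q}{d}$.
\item The $n^{\delta}$ in the running time does not come from degree fluctuations on $\log n$-depth trees, which cost only $n^{o(1)}$. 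It comes from estimating the phase weights in $(\beta_0,\beta_1)$ with \cite[Theorem~2]{GGS}.
\end{itemize}
Finally, you need to restrict to the giant component and sample the small components exactly, since the phase decomposition only refers to the giant.
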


As noted above, the main engine to obtain the fast algorithm in Theorem~\ref{thm:main2} is to obtain mixing-time results on the Poisson tree with parameter $d$ under the monochromatic boundary condition.
To introduce this result we need some notation. Let $T$ be a rooted tree with root $r$,  and for an integer $h\geq 1$, let $T^h$ be the tree obtained by truncating $T$ at level $h$. 
We will  be interested in the monochromatic boundary condition on the leaves of $T^h$ that are at distance $h$ from the root.\footnote{Note that there might not be any such leaves of $T^h$ at depth $h$, in which case
there is no boundary condition. This event has $\Omega(1)$ probability for a
supercritical Poisson tree, but it is not problematic: conditional
on extinction the process is subcritical, so the resulting tree is
$O(\log n)$ with high probability. More generally, our
upcoming block decomposition handles uniformly the cases where few branches reach level
$h$; see Lemmas~\ref{lem:localspectral} and~\ref{lem:degree-sum}.} For notational convenience we denote the common boundary colour $i$ by ``$\pl$'' and write the resulting Potts distribution as $\mu^\pl \coloneqq \mu^\pl_{T^h;q,\beta}$, that is, for $\sigma\in [q]^{V}$, 
\[
\mu^\pl(\sigma)=\mu_{T^h;q,\beta}\bigl(\sigma \,\big|\, \sigma_v=\pl \text{ for all } v  \text{ with } \mathrm{dist}(r,v)=h\bigr).
\]
In the case of a Poisson tree $T$, the truncated tree  $T^h$ will typically also  have other leaves that are much closer than $h$ to the root: these are not part of the boundary condition. 
This is one central requirement for our $G(n,d/n)$ application, and is  one of the main difficulties in our setting relative to the $d$-regular case. More generally, Poisson trees have  pieces that are very sparse (e.g. induced paths of order $\Theta(\log n)$ or other small degree induced subgraphs) and dead-ends (i.e., branches that stop before reaching level~$h$), making the effect of the boundary much less uniform and harder to capture than in the regular-tree setting.  
\begin{theorem}\label{thm:main1}
    Let $\theta>1$ and $\kappa,K>0$ be arbitrary reals.  Then, for all large enough real $d$,  for  all integer $q\geq d^{\theta}$, for all real $\beta > (1+\frac{1.01}{\theta})\frac{\log q}{d}$, the following holds for all large~enough~$n$.

      Let $T$ be a Poisson tree with parameter $d$,  truncated at level~$h=\lfloor K\log_d n\rfloor$. Then, with probability $\geq 1 -n^{-\kappa}$ over  $T$,  the Glauber dynamics for the Potts distribution $\mu^{\pl}_{T^h;q,\beta}$ with monochromatic boundary condition has mixing time $\leq |V(T)|n^{o(1)}$.
\end{theorem}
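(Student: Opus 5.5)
We will bound the relaxation time and the log-Sobolev constant (equivalently, approximate tensorisation of entropy) of the Glauber dynamics for $\mu^\pl$ by $n^{o(1)}$, uniformly over trees $T^h$ in a good event of probability $\geq 1-n^{-\kappa}$. Since $\log(1/\min_\sigma \mu^\pl(\sigma)) = O(|V|\beta\Delta)$ with $\Delta = O(\log n/\log\log n)$ on the good event, this gives mixing time $|V(T)|n^{o(1)}$. The strategy follows the block-dynamics approach of Martinelli--Sinclair--Weitz. First, we show that a heat-bath block dynamics, whose blocks are subtrees of the Poisson tree, mixes in $O(1)$ or $n^{o(1)}$ time. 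Then we bound the Glauber dynamics inside each block under arbitrary boundary conditions by $n^{o(1)}$ times the block size, and combine the two estimates through the standard block-decomposition (spectral/entropy comparison) inequality.

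The first step is the adaptive block decomposition. Fix a large constant $D$ (depending on $\theta$), and call a vertex \emph{good} if its subtree contains a $D$-ary regular subtree reaching depth $L$, for a suitable $L = \Theta(\log_d\log n)$ or constant; otherwise call it bad. On the good event, standard Poisson branching estimates show three things. Every vertex has degree $O(\log n)$. Bad regions (dead ends, long induced paths, low-degree pieces) form components of size $n^{o(1)}$, or at least have bounded ``bad depth'' along root-to-leaf paths. Good vertices have many children that are themselves good. The blocks will be (i) the good subtrees truncated at depth $L$, and (ii) the bad components hanging off them. Inside a block we allow arbitrary boundary, so block-internal mixing will be proved crudely: the block has bounded treewidth (it is a tree), so canonical paths or recursive tree decomposition with cost $e^{O(\beta \cdot \text{cut size})}$ give $n^{o(1)}$ when the blocks have small depth or small boundary. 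This is where $q \geq d^\theta$ must enter, to ensure that $e^{\beta}$ raised to degree-sum quantities stays $n^{o(1)}$. I would state this as the degree-sum lemma referenced in the footnote.

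The second and main step is correlation decay under the $\pl$ boundary, which gives contraction of the block dynamics. Using $\beta > (1+1.01/\theta)\log q/d$, we have $e^{\beta D'} \gg q$ for the effective degree $D'\approx d$ of good vertices. A good vertex whose good children are mostly $\pl$-coloured is therefore $\pl$ with probability $1-q^{-\Omega(1)}$, and by induction upward from the boundary every good vertex is ``$\pl$-dominated''. Following Blanca--Chen--\v{S}tefankovi\v{c}--Vigoda, we couple two block-dynamics copies differing at one site. We then show, via a recursive percolation-type bound on the disagreement spreading along good vertices, that the expected number of disagreements contracts. The key quantity is the influence of a parent's spin on a child's block marginal under a mostly-$\pl$ environment, which is $O(q\,e^{-\beta D})$ or smaller; summed over the $\le d^{O(1)}$ neighbouring blocks this is $<1$. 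Bad components contribute influence through their attachment points; we bound this by a spectral (local spectral independence) argument, so that the decay through good layers dominates the amplification through bad parts.

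The hard step is the second one. The obstacle is that the $\pl$ bias is inherited only along good paths, while dead ends and long paths give no such bias and can propagate disagreements with influence close to $1$. I would first try to prove a marginal lemma stating that, for any good vertex $v$ and any boundary outside its good subtree, $\mu^\pl(\sigma_v\neq \pl)\le q^{-c}$. Using this, I would reduce the contraction to a weighted sum over paths in which each bad vertex contributes a factor of at most $1$ and each good vertex a factor of $q^{-c}$. The path-counting on the good event would then close, because bad stretches along any path have length $o(\log n)$ while $q^{-c}$ beats the branching factor $d$ precisely when $q\ge d^\theta$.
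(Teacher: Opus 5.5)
There is a genuine gap in your second (``main'') step. A coupling of two block-dynamics copies differing at one site contracts only if the influence bound holds for \emph{every} pair of neighbouring states, i.e.\ with each block's environment arbitrary. Your estimate $O(q\emm^{-\beta D})$ for the influence of a parent on a child's block assumes a ``mostly-$\pl$'' environment. That is a property of the stationary measure $\mu^\pl$, not of the states the chain visits, and in adversarial environments there is no $\pl$ bias at all. The range of $\beta$ lies above the uniqueness threshold, so a contraction uniform over environments (a Dobrushin--Shlosman-type condition) should not be expected. Nothing in your sketch supplies it, including the appeal to local spectral independence for bad parts.

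The missing idea is the Martinelli--Sinclair--Weitz mechanism, which needs non-reconstruction only \emph{under the Gibbs measure itself}. In Theorem~\ref{thm:block-dstar} every vertex $v$ gets its own block $B_v=T^h_{v,\ell}$ (overlapping blocks, cover radius $R^*\le h$). One writes $\Var_T(f)\le\sum_v\E_T[\Var^{\circ}_{T_v}(\E^{\circ}_{T'_v}f)]$ (Lemma~\ref{cor:decomp}). Each summand is bounded by $4\E_T[\Var^\circ_{B_v}f]$ plus $5\rho$ times the summands for $w\in B_v'\cup\partial B_v$ (Lemma~\ref{lem:nonrecoalt}), where $\rho$ bounds $\|\nu_{\partial B_v}(\cdot\mid\sigma_v=i)-\nu_{\partial B_v}(\cdot\mid\sigma_v=j)\|_{\mathrm{TV}}$. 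The recursion closes once $10\rho R^*<1$. Worst-case boundaries then enter only through local mixing inside polylog-size blocks, where the crude centroid bound $q^{O(\log N)}\emm^{O(\beta W)}=n^{o(1)}$ of Lemma~\ref{lem:localspectral} suffices for any fixed $q,\beta$. So, contrary to your first step, $q\ge d^\theta$ is not needed there. It is needed to get $6dp<1$, so that $\rho=2WNp^{\ell+1}$ with $\ell=\lfloor C_0\log\log n\rfloor$ good layers beats both $|\partial B_v|\le WN$, where $N=(6d)^\ell(\log n)^2$, and the factor $R^*=O(\log n)$.

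Your definition of good vertices also cannot deliver the bias you need. A $D$-ary subtree of \emph{relative} depth $L$ gives no preference for $\pl$. If $T_v$ never reaches depth $h$, colour symmetry gives $\mu^\pl(\sigma_v=j\mid\sigma_{\mathrm{parent}(v)}=j)=\emm^\beta/(\emm^\beta+q-1)$ for every $j$. So, given a non-$\pl$ parent, $v$ is non-$\pl$ with probability $1-1/(\emm^\beta+q-1)$, however dense $T_v$ is. Your proposed lemma ``for any boundary outside its good subtree'' is false outright: colour the lower boundary of the $D$-ary piece with some $j\neq\pl$. The ``induction upward from the boundary'' you mention also does not close, since children in a depth-$L$ $D$-ary piece are not themselves good.

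It works only if goodness is defined from the depth-$h$ boundary, as in the paper: depth-$h$ vertices are good, and a vertex is good if it has at least $D$ good children. Lemma~\ref{lem:marginal} then conditions only on the parent's spin. It uses $Z^{\mi}_v/Z^{\pl}_v=\prod_{u\prec v}f(Z^{\mi}_u/Z^{\pl}_u)$ together with $Z^{\mi}_u\le Z^{\pl}_u$ for all $u$, so bad children never hurt. This gives $\mu(\sigma_v\neq\pl\mid\sigma_{\mathrm{parent}(v)}=j)\le 2q^{1-\xi}$ for every $j$, which is exactly what gets multiplied along the $\ell+1$ good vertices on each path to $\partial B_v$.

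Finally, you need an actual route to the log-Sobolev constant. A gap of order $1/(|V|n^{o(1)})$ alone gives only $|V|^2n^{o(1)}$, because $\log(1/\mu_{\min})$ is of order $|V|$. The paper uses Lemma~\ref{lem:spectologSob}: it applies the gap bound to every descendant subtree with arbitrary boundary and loses only a factor $h\log(1/b)=n^{o(1)}$.
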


\begin{remarkone}
For  $q$ large (by taking $\theta$ large), the range of $\beta$ goes all the way down to the ``uniqueness threshold'' $\beta_u(q,d) \approx \frac{\log q}{d}$; roughly, for smaller values of $\beta$, the effect of any boundary condition dies out and the mixing time is dictated by the ``free boundary condition'' where no leaves are fixed. See~\cite{BlancaGheissari2023} for more detail and results in the regime~$\beta<\beta_u(q,d)$.

In our $G(n,d/n)$ application, we will be primarily interested in the range $\beta \geq  (2-o(1))\frac{\log q}{d}$ that captures the ordered regime where the modes corresponding to monochromatic colourings start to dominate the Potts distribution. In this regime, Theorem~\ref{thm:main1} only requires  $q\geq d^{1+o(1)}$.
\end{remarkone}

\begin{remarkone}
\label{remark:running time}
As a main step in our proof, we obtain that the spectral gap and log-Sobolev constants are both  $\geq 1/(|V(T^h)|n^{o(1)})$, see Section~\ref{sec:proofoverview} for further details. The $n^{o(1)}$ factor in all these results scales in fact as $n^{O(\frac{\beta}{\log \log n})}$.
Conditioned on survival to depth $h$, we have with high probability $|V(T)|=n^{\Omega(1)}$, so the mixing time bound is $|V(T)|^{1+o(1)}$.
      This improves significantly over the
\(|V(T)|^{O(\beta)}\) upper bound (especially when $\beta$ is large) from the classical recursive comparison argument for trees (see, e.g.,~\cite{MosselSly09}), which controls the spectral gap in terms of the maximal degree sum along a root-to-leaf path.
\end{remarkone}


\subsection{Further Related Work}\label{sec: related work}
For the ferromagnetic Ising model (the case $q=2$), Jerrum and Sinclair, and later Guo and Jerrum, gave polynomial-time sampling algorithms on arbitrary graphs at all temperatures \cite{JerrumSinclair1993,GuoJerrum2018}. By contrast, for $q>2$ approximate sampling on arbitrary graphs is already subject to \#BIS-hardness barriers  at low temperatures \cite{GoldbergJerrum2012,GalanisStefankovicVigodaYang2016}, so as noted in the introduction positive results must exploit additional graph structure. 

Low-temperature sampling for the Potts model was initiated in   \cite{JKP20,helmuth2019algorithmic} using the polymer method, see \cite{helmuth2023finite,UG1, CDFKPY24,galanis2021fast,CGGPSV21} for various follow-ups based on this algorithmic approach. The Glauber dynamics analogue of these results was initiated in \cite{Gheissari2022}, see also \cite{Gheissari2022,BGPZ2026,BGnew, BG24, GGS23,Galanis2026PlantingModel,RClattice}. Most of these results apply to lattices or sparse graphs that have expansion properties (the latter includes random graphs). We emphasize here that Glauber dynamics at low temperatures is provably slow, so the main approach behind these works is to show mixing within each of the $q$  phases (or some suitable analogue in the random cluster representation, see Section~\ref{sec: application} for relevant details),  and then  combine the corresponding samples appropriately.

The $G(n,d/n)$ setting is more delicate than the complete graph and the random $d$-regular setting
because the local neighbourhoods are non-uniform and the expansion properties are a bit weaker (e.g., the graph is disconnected), so understanding what happens for low temperatures has extra challenges. The high temperature case (small $\beta$) is better understood by now, see \cite{BlancaGheissari2023,MosselSly09,bezakova2024fast, efthymiou2023mixing,liu2024fast} for some approaches in this direction.  For the low-temperature setting, the most relevant work is  \cite{GGS} where they obtained poly-time sampling for the Potts and random-cluster models on ${G}(n,d/n)$ for sufficiently large $q$ and $d$, at all temperatures. We build upon their algorithm and improve their analysis using the Poisson tree bounds. We give more details in Section~\ref{sec: application}.

\section{Proof of Main Theorems}\label{sec:proofoverview}
Here we give an overview of the main pieces needed to obtain Theorems~\ref{thm:main2} and~\ref{thm:main1}. We start with the proof of Theorem~\ref{thm:main1} which is the main new technical contribution.

Our starting point (Section~\ref{sec:nonreco43}) is a spectral-gap bound on general trees based on  a suitable block decomposition  where the blocks exhibit the non-reconstruction property.  The approach  is largely inspired by the approach of Martinelli, Sinclair, and Weitz~\cite{MSW03, MSW04} in the case of $d$-regular trees. The main difference from their setting is that on the $d$-regular tree it is enough to consider a uniform block decomposition of the tree (subtrees of depth $\ell$), so the non-reconstruction property was inherited automatically from that of the $d$-regular tree. Instead, for us the blocks are defined so that they adapt to the structure of the tree and  exhibit the non-reconstruction property. \mbox{Similar non-uniform decompositions are considered in~\cite{doi:10.1137/1.9781611977912.179}}, though the approach there relies on geometric properties of the underlying graph (balanced separators) rather than the distributional properties that we consider here~(non-reconstruction).

We then use this framework to study the mixing time on the Poisson tree in Section~\ref{sec: mixing trees}. The key idea is to consider an adaptive block decomposition of the tree, built around regions containing large regular subtrees, where showing non-reconstruction becomes quite direct. On the other hand, we need to work a bit harder to control the size of the blocks and a related sum-of-degrees property (which is used to obtain local mixing bounds and the block~radius).

The final piece (Section~\ref{sec: application}) is to apply the Poisson-tree result to the $G(n,d/n)$ setting and obtain Theorem~\ref{thm:main2}. Aside from coupling the $G(n,d/n)$ neighbourhoods with the Poisson-tree neighbourhoods, we also need to align with the approach in \cite{GGS} where they use the random cluster (RC) representation of the Potts model (which is an edge version of the Potts model). In particular, this involves  translating the Potts mixing result to an RC mixing result. Here, we provide a simple general tool to do this lifting (see Lemma~\ref{lem:lifting}), inspired by related approaches from \cite{blanca2021entropy,blanca2023swendsen}.

Finally, in  Section~\ref{sec:block-dstar}, we give an overview of the functional analysis tools we use and give the proofs of the key results. The appendices then include the remaining proofs.

\subsection{Mixing via Nonreconstruction}\label{sec:nonreco43}

For a rooted tree $T=(V,E)$ with root $r$, for a vertex $v$ we denote by $T_v$ the subtree of $T$ consisting of $v$ and its descendants. We say that $\mathcal{B}=\{B_v\}_{v\in T}$ is  a (rooted) \emph{block cover} of $T$ if  for each $v\in T$, it holds that $v\in B_v$,  $B_v\subseteq T_v$, and $B_v$ is rooted at $v$. For a block $B_v$, we denote by $\partial  B_v$ the descendants of $v$ that are not in $B_v$ but have their parent in $B_v$. The cover radius $R^*=R^*(\mathcal{B})$ is defined as $R^* \coloneqq
        \max_{u\in T}
        \big|\{v\in T\mid u\in B_v\cup \partial B_v\}\big|$.
        
Now, let $\nu$ be a  distribution supported on $\Omega_\nu\subseteq [q]^V$. We let $P_\nu$ denote the Glauber dynamics corresponding to $\nu$ (on the space $\Omega_\nu$).  In order to bound the mixing time,  we will  consider the spectral gap $\gamma \coloneqq \gamma(P_\nu)$ of its transition matrix and, later, the related log-Sobolev constant. Recall, the spectral gap is the difference between the largest and second largest eigenvalue (where the eigenvalues are sorted in absolute value). The mixing time is then bounded by $\frac{1}{\gamma}\log \frac{4}{\nu_{\min}}$ where $\nu_{\min}=\min_{\sigma\in \Omega} \nu(\sigma)$, see also Lemma~\ref{lem:mixingtimebounds} below.

For a configuration $\tau\in \Omega_\nu$ and a set $S\subseteq V$, we denote by $\nu^\tau_S$ the distribution on $[q]^V$ obtained from $\nu$ by conditioning the vertices in $V\backslash S$ to take the spins prescribed by $\tau$, i.e. $\nu^\tau_S(\cdot)=\nu(\cdot \mid \sigma_{V\backslash S}=\tau_{V\backslash S})$, provided  that $\nu( \sigma_{V\backslash S}=\tau_{V\backslash S})>0$ so that the conditional distribution is well-defined. We use the notation $\nu_S$ (i.e., without a superscripted $\tau$) to denote the marginal distribution on the vertices in $S$. i.e., $\nu_S(\cdot)=\nu(\sigma_S=\cdot)$. We also assume throughout that $\nu$ is a (nearest-neighbour) Gibbs distribution.\footnote{\label{fn:Gibbs}By this, we mean that 
for $\tau\in [q]^V$ and every vertex subset $S$ of $T$, the conditional distribution
$\nu_S^\tau$ depends on $\tau$ only through the spins on the external boundary $\Gamma(S)=\big\{u\in V\setminus S:\exists v\in S,\ (u,v)\in E\big\}$.}

We further say that the block cover $\mathcal{B}$ satisfies non-reconstruction with block decay $\rho$ under a measure $\nu$ on $[q]^V$ if, for every $v\in V$ and colours $i,j\in[q]$ with $\nu(\sigma_v=i),\nu(\sigma_v=j)>0$, 
\begin{equation}\label{eq:nonreco5454}
\Big\|
   \nu_{\partial B_v}\big(\,\cdot\,\mid \sigma_v=i\big)-
   \nu_{\partial B_v}\big(\,\cdot\,\mid \sigma_v=j\big)
 \big)\Big\|_{\mathrm{TV}}
 \le \rho.
 \end{equation}
 Intuitively, this says that the colour at the root \(v\) may influence the colours inside the block \(B_v\), but by the time one reaches the outer boundary $\partial B_v$, this influence has essentially~died~out.
 
 Finally we say that the block cover $\mathcal{B}$ satisfies local mixing with factor $C^*$ uniformly over all boundary conditions if, for each $v\in V$  and any boundary condition $\tau\in \Omega_\nu$, the spectral gap of the Glauber dynamics $P_{\nu^{\tau}_{B_v}}$  is at least $(C^*|B_v|)^{-1}$.
\begin{theorem}[Spectral gap from non-reconstruction and local mixing]
\label{thm:block-dstar}
Fix integer $q\ge 2$, a rooted tree $T=(V,E)$ and a Gibbs distribution $\nu$. Suppose $\mathcal{B}=\{B_v\}_{v\in T}$ is a rooted block cover of $T$ that has 
\begin{enumerate}[(i)]
    \item cover radius $R^*$,
    \item non-reconstruction with block decay $\rho\in (0,\tfrac{1}{20})$, and
    \item local mixing with factor $C^*$ uniformly over all boundary conditions.
\end{enumerate}
Then, if $10\rho R^* < 1$, the spectral gap of the Glauber dynamics  $P_\nu$ on $T$ is at least $(8R^* C^*|V|)^{-1}$.
\end{theorem}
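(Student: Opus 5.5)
The plan is to follow the Martinelli--Sinclair--Weitz strategy and compare the Glauber dynamics with a \emph{block dynamics} whose blocks are the sets $B_v$. For each $v\in V$, the block dynamics resamples $\sigma_{B_v}$ from $\nu^{\sigma}_{B_v}$. It then suffices to prove two things.

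\emph{(a) Variance factorisation.} For every $f:\Omega_\nu\to\mathbb{R}$,
\[
\Var_\nu(f)\le 2\sum_{v\in V}\nu\bigl[\Var^{\sigma}_{B_v}(f)\bigr].
\]

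\emph{(b) Comparison.} Local mixing (iii) gives, for each $v$ and each boundary $\tau$,
\[
\Var^{\tau}_{B_v}(f)\le C^*|B_v|\,\mathcal{E}_{\nu^\tau_{B_v}}(f,f).
\]
Here $\mathcal{E}_{\nu^\tau_{B_v}}(f,f)=\frac{1}{|B_v|}\sum_{u\in B_v}\nu^\tau_{B_v}[\Var_u(f)]$, so $\Var^{\tau}_{B_v}(f)\le C^*\sum_{u\in B_v}\nu^\tau_{B_v}[\Var_u(f)]$. Averaging over $\tau\sim\nu$ and summing over $v$, each $u$ is counted at most $R^*$ times, since $u\in B_v$ only for at most $R^*$ values of $v$ by (i). Hence
\[
\Var_\nu(f)\le 2C^*R^*\sum_u\nu[\Var_u(f)]=2C^*R^*|V|\,\mathcal{E}_{P_\nu}(f,f).
\]
This gives a gap of order $(R^*C^*|V|)^{-1}$. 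The constant $8$ leaves room to lose a factor of $4$ in (a): I will aim for the bound $\Var_\nu(f)\le 4\sum_v\nu[\Var_{B_v}(f)]$, or equivalently a block-dynamics gap at least $1/(4|V|)$ in discrete time.

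For (a), I will proceed by induction/peeling from the root, in the spirit of MSW's proof for regular trees. Order the vertices so that every vertex comes after its ancestors, for example in BFS order from the root. Decompose $\Var_\nu(f)$ by the martingale/chain-rule along this order. Concretely, write $f$ through the successive conditional expectations $f_k=\nu[f\mid\sigma_{v_1},\dots,\sigma_{v_k}]$, so that $\Var(f)=\sum_k\nu[\Var(f_k\mid\sigma_{v_1..v_{k-1}})]$.

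The term for $v$ should be compared with $\nu[\Var_{B_v}(f)]$. By the Gibbs property on the tree, conditioning on $\sigma_v$ and on everything outside $T_v$ splits the dependence of the subtree. The error arises because the conditional law of the part below $\partial B_v$ depends on $\sigma_v$. The non-reconstruction assumption (ii) bounds exactly this dependence: the TV distance between the laws of $\sigma_{\partial B_v}$ under two root colours is at most $\rho$. By the Markov property, this propagates to the whole of $T_v\setminus B_v$.

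A standard coupling estimate then turns this into a cross-term bound of the form
\[
\text{(term at }v)\le \nu[\Var_{B_v}(f)] + O(\rho)\sum_{w}\text{(terms at }w\in\partial B_v\cup B_v),
\]
with a Cauchy--Schwarz step on the covariance $\mathrm{Cov}(g(\sigma_v),h(\sigma_{\partial B_v}))\le\rho\sqrt{\Var g\,\Var h}$-type inequality.

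Summing over $v$, each vertex appears in the error terms at most $R^*$ times, which introduces the cover-radius factor again. This yields
\[
\Var(f)\le\sum_v\nu[\Var_{B_v}f]+c\rho R^*\Var(f).
\]
With $10\rho R^*<1$, and using $\rho<1/20$ to handle the constant in the covariance-to-TV conversion, the error can be absorbed to give factor at most $2$ or $4$.

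The main obstacle is this second step. The difficulty is to prove a clean covariance bound that uses only the TV non-reconstruction hypothesis, with no assumption on the marginal of $\sigma_v$ (hence the condition $\nu(\sigma_v=i)>0$), and to organise the bookkeeping so that each vertex is charged at most $R^*$ times. My first attempt will be to prove a contraction for the block dynamics instead. I will couple two copies from any pair of configurations and use a weighted Hamming-type or $L^2$ projection argument of MSW type: resampling $B_v$ erases the influence of $\sigma_v$ on everything outside except through $\partial B_v$, and this leaks with probability at most $\rho$. If the $L^2$ version proves awkward, I will instead bound the spectral gap of the block dynamics directly via the variational characterisation, using the operator-norm bound $\|\nu[\,\cdot\mid\sigma_v]-\nu\|\le\rho$ on functions of $\sigma_{\partial B_v}$.
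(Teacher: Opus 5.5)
Your architecture matches the paper's. You use a top-down per-vertex decomposition of $\Var(f)$, where your BFS martingale plays the role of Lemma~\ref{cor:decomp}. You then need a per-vertex inequality driven by non-reconstruction with residual on $B'_v\cup\partial B_v$, summation via the cover radius, and absorption via $10\rho R^*<1$. Your step (b) is exactly the paper's use of local mixing. But the per-vertex inequality is the real content of the theorem, you leave it as ``the main obstacle'', and the tool you propose for it is false.

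A TV bound $\rho$ between the laws of $\sigma_{\partial B_v}$ given $\sigma_v=i$ and $\sigma_v=j$ does \emph{not} give $\mathrm{Cov}(g(\sigma_v),h(\sigma_{\partial B_v}))\le\rho\sqrt{\Var g\,\Var h}$. Let $\sigma_v$ be a fair bit, and let its only child $u$ have $\sigma_u=1$ with probability $\rho$ when $\sigma_v=1$ and $\sigma_u=0$ when $\sigma_v=0$. Then the correlation is about $\sqrt{\rho/2}$. Likewise, in $L^2(\nu)$ the operator norm in your last sentence is the maximal correlation, which can be of order $\sqrt{\rho}$.
\begin{itemize}
\item With the correct order $\sqrt{\rho}$, a direct cross-term estimate leaves errors of order $\sqrt{\rho}\,R^*$, which $10\rho R^*<1$ does not control.
\item In this two-vertex example with $B_v=\{v\}$ and $f=\sigma_v+\rho^{-1/2}\sigma_u$, your displayed bound is violated by a term of order $\sqrt{\rho}$. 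This holds with leading coefficient $1$ on $\nu[\Var_{B_v}f]$, and even with $1+O(\rho)$.
\end{itemize}

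The missing idea is that TV non-reconstruction controls the \emph{squared} correlation linearly. For $g_1$ not depending on $\sigma_{B_v}$, one has $\Var^\eta_{T_v}(\E^\circ_{T'_v}g_1)=\tfrac12\sum_{i,j}p_ip_j(G_i-G_j)^2\le 2\rho\Var^\eta_{T_v}(g_1)$. The proof couples the conditional laws given $\sigma_v=i,j$ so that they differ on $T_v\setminus B_v$ with probability at most $\rho$ (by the Markov property). It then applies Cauchy--Schwarz against the disagreement indicator; no lower bound on the $p_i$ is needed.

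The paper then proceeds as follows.
\begin{itemize}
\item It writes $g=\E^\circ_{B_v}g+(g-\E^\circ_{B_v}g)$ and uses $\Var(U+V)\le 2\Var U+2\Var V$, so no cross term ever appears.
\item It bounds the second piece by $\E^\eta_{T_v}\Var^\circ_{B_v}g$ via Jensen.
\item It rearranges using $\Var^\eta_{T_v}g=\Var^\eta_{T_v}(\E^\circ_{T'_v}g)+\E^\eta_{T_v}\Var^\circ_{T'_v}g$, obtaining coefficients $\tfrac{2}{1-4\rho}\le 4$ and $\tfrac{4\rho}{1-4\rho}\le 5\rho$.
\end{itemize}

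The bookkeeping also does not close as you set it up, for two reasons.

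First, the residual $\E^\eta_{T_v}\Var^\circ_{T'_v}(f)$ decomposes over all $w\in T'_v$. Summing over $v$ then charges each $w$ once per ancestor, i.e.\ up to the height of the tree rather than $R^*$ times. The paper localises the residual by applying the per-vertex bound to $g=\E^\circ_R f$ with $R=T_v\setminus(B_v\cup\partial B_v)$. This leaves $\E^\circ_{T'_v}$ unchanged. It does not increase $\E\Var^\circ_{B_v}$, since $R$ is at distance at least $2$ from $B_v$. And it kills every $w\in R$ when Lemma~\ref{cor:decomp} is applied to $\Var^\circ_{T'_v}(g)$.

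Second, the residuals come out as $\E\Var^\circ_{T_w}(\E^\circ_{T'_w}f)$. These upper-bound your BFS martingale increments rather than being bounded by them, so $c\rho R^*\sum_w(\cdot)$ cannot be absorbed into $\Var(f)$. The absorption must be done for $P(f)=\sum_v\E\Var^\circ_{T_v}(\E^\circ_{T'_v}f)\ge\Var(f)$. This gives $P(f)\le 4C^*R^*D(f)+5\rho R^*P(f)$ with $D(f)=\sum_u\E\Var^\circ_u(f)$, hence $P(f)\le 8R^*C^*D(f)$.

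Finally, a coupling contraction for the block dynamics is no ready substitute. Resampling $B_v$ with a disagreement at the parent of $v$, or in $\partial B_v$, can create disagreements throughout $B_v$. Non-reconstruction only controls the influence of $\sigma_v$ on $\partial B_v$, so a Hamming-type path coupling need not contract.
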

The proof of Theorem~\ref{thm:block-dstar} is via approximate factorization of variance (which is an equivalent way of bounding the spectral gap) and is given in Section~\ref{sec:block-dstar}. Note that the spectral gap bound typically incurs a factor $\log \frac{1}{\nu_\mathrm{\min}}=\Omega(n)$ in the final mixing time bound which usually is suboptimal. To improve on this, there is by now a standard toolbox via bounding the log-Sobolev constant $\alpha(P)$ of $P$, or relatedly, the modified log-Sobolev constant $\hat\alpha(P)$ (via approximate factorization of entropy). 

The definitions of these quantities are a bit technical and based on functional inequalities; these are given for completeness in Section~\ref{sec:funpreliminaries} where the bulk of the mixing proofs are given. For now, we record the following well-known facts that is all we are going to need in the main proof overview below. For a distribution $\nu$ over $[q]^V$,   we say that $\nu$ is $b$-marginally bounded for some $b>0$ if for every $\tau\in \Omega_\nu$, $S\subseteq V$ and $v\in S$, it holds that $\min\big\{\nu^\tau_S(\sigma_v=i)\mid i\in [q] \mbox{ such that }  \nu^\tau_S(\sigma_v=i)>0\big\}\geq b$.
\begin{lemma}[See, e.g., {\cite[Fact~3.5]{CLV21a}}]\label{lem:mixingtimebounds}
Let $\nu$ be a Gibbs distribution over $[q]^{V}$ and $P$ denote the Glauber dynamics for $\nu$. Let $\nu_{\mathrm{min}}=\min_{\sigma: \nu(\sigma)>0} \nu(\sigma)$. Then the following hold:
\begin{enumerate}[(i)]
    \item $T_{\mathrm{mix}}(\epsilon)\leq \frac{1}{\gamma}\log \frac{1}{\epsilon\nu_{\mathrm{min}}}$ where $\gamma$ is the spectral gap of $P$.
    \item $T_{\mathrm{mix}}(\epsilon)\leq \frac{1}{\alpha}\big(\log\log \frac{1}{\nu_{\mathrm{min}}}+\log \frac{1}{2\epsilon^2}\big)$ where $\alpha$ is the log-Sobolev constant of $P$
    \item $T_{\mathrm{mix}}(\epsilon)\leq \frac{1}{\hat\alpha}\big(\log\log \frac{1}{\nu_{\mathrm{min}}}+\log \frac{1}{2\epsilon^2}\big)$ where $\hat\alpha$ is the modified log-Sobolev constant of $P$.    
\end{enumerate}
It holds that $\alpha\leq \hat\alpha\leq 2\gamma$. Moreover, with $K(r)=\frac{1-2r}{\log (\frac{1}{r}-1)}$, it holds that $\alpha\geq K(\nu_{\mathrm{min}})\gamma$; for $\nu$ that is $b$-marginally bounded, it holds that $\alpha\geq K(b) \hat\alpha$.
\end{lemma}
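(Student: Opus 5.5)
These are standard facts, so the plan is to assemble them from the classical functional-inequality toolbox rather than prove anything new. Throughout, $P$ is the heat-bath Glauber dynamics. It is reversible with respect to $\nu$, and its Dirichlet form is $\mathcal{E}(f,f)=\frac{1}{|V|}\sum_{v}\nu[\Var_v^{\sigma}(f)]$, where $\Var_v^\sigma$ is the variance under $\nu_{\{v\}}^{\sigma}$. I would work with the continuous-time (or lazy) chain, so that eigenvalue signs are harmless and only the spectral gap matters. The discrete-time statements then follow by the usual comparison, with constant-factor losses absorbed into the definitions of $\gamma,\alpha,\hat\alpha$ as in \cite{CLV21a}.

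\textbf{Items (i)--(iii).}
\begin{enumerate}[(i)]
\item For the spectral gap bound I use the $L^2$ contraction $\|P^t f-\nu f\|_{2}\le e^{-\gamma t}\|f-\nu f\|_2$. I apply it to the density $f=\mu_0/\nu$ of the initial law. Starting from a point mass at $\sigma$, we have $\|f-1\|_2^2\le 1/\nu(\sigma)\le 1/\nu_{\min}$. Cauchy--Schwarz then gives $\|\mu_t-\nu\|_{\mathrm{TV}}\le \frac12 e^{-\gamma t}\nu_{\min}^{-1/2}$, and solving for $t$ yields (i).
\item For the modified log-Sobolev bound I use entropy decay: $\frac{d}{dt}\Ent_\nu(f_t)=-\mathcal{E}(f_t,\log f_t)\le-\hat\alpha\,\Ent_\nu(f_t)$. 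This gives $\Ent_\nu(f_t)\le e^{-\hat\alpha t}\log(1/\nu_{\min})$, since the initial relative entropy of a point mass is at most $\log(1/\nu_{\min})$. Pinsker's inequality $\|\mu_t-\nu\|_{\mathrm{TV}}^2\le\frac12\Ent_\nu(f_t)$ then produces the $\log\log\frac1{\nu_{\min}}+\log\frac{1}{2\epsilon^2}$ form, which is item (iii).
\item Item (ii) follows from (iii) once we know $\alpha\le\hat\alpha$.
\end{enumerate}

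\textbf{Comparisons between the constants.}
\begin{enumerate}[(a)]
\item $\alpha\le\hat\alpha$ is the standard inequality $\mathcal{E}(f,\log f)\ge 4\,\mathcal{E}(\sqrt f,\sqrt f)$. It follows edge by edge from $(a-b)(\log a-\log b)\ge 4(\sqrt a-\sqrt b)^2$, with the normalisation of $\alpha$ chosen accordingly.
\item $\hat\alpha\le 2\gamma$ comes from linearising. Plug $f=1+\epsilon g$ with $\nu g=0$ into the modified LSI. Then $\Ent(f)=\frac{\epsilon^2}{2}\Var(g)+O(\epsilon^3)$ and $\mathcal{E}(f,\log f)=\epsilon^2\mathcal{E}(g,g)+O(\epsilon^3)$, so letting $\epsilon\to0$ gives $\hat\alpha/2\le\gamma$.
\item $\alpha\ge K(\nu_{\min})\gamma$ is the Diaconis--Saloff-Coste comparison. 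The key input is the sharp log-Sobolev constant of the two-point space, $K(r)=\frac{1-2r}{\log(1/r-1)}$. One then compares $P$ with the trivial chain $\Pi(\sigma,\cdot)=\nu(\cdot)$, whose log-Sobolev constant is at least $K(\nu_{\min})$ and whose spectral gap is $1$. The bound $\mathcal{E}_P\ge\gamma\,\mathcal{E}_\Pi$ holds because $\mathcal{E}_\Pi(f,f)=\Var(f)$, and this transfers the log-Sobolev bound to $P$.
\end{enumerate}

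\textbf{Main obstacle: $\alpha\ge K(b)\hat\alpha$ under $b$-marginal boundedness.} I expect this to be the only step that genuinely uses the Glauber structure. The plan is to pass through approximate tensorisation of entropy. The dependence on $\hat\alpha$ is fixed so that $\Ent_\nu(f)\le\frac{C}{|V|\hat\alpha}\sum_v\nu[\Ent_v(f)]$ with an absolute $C$; this is the standard equivalence between the modified LSI for heat-bath Glauber and approximate tensorisation, up to constants. Apply this with $f$ replaced by $f^2$. Each single-site conditional measure $\nu^\sigma_{\{v\}}$ has all nonzero atoms at least $b$, so its own log-Sobolev constant is at least $K(b)$ (again the Diaconis--Saloff-Coste bound on a space whose minimum mass is $b$). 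Hence $\Ent_v(f^2)\le K(b)^{-1}\Var_v(f)$. Summing over $v$ gives $\Ent(f^2)\le\frac{C}{K(b)\hat\alpha}\,\mathcal{E}(f,f)$, that is, $\alpha\gtrsim K(b)\hat\alpha$.

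The delicate point is matching the absolute constant $C$ with the normalisations of \cite{CLV21a}. If that fails, I would state the bound up to a universal constant factor, which is harmless for every later use in the paper.
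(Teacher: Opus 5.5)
There is a genuine gap: you use $\hat\alpha$ in two different senses. In (iii) and (a), $\hat\alpha$ is the standard modified log-Sobolev constant $\rho$, i.e.\ $\rho\,\Ent(f)\le\mathcal{E}(f,\log f)$. The last step instead needs approximate tensorisation of entropy, and you bridge the two with a ``standard equivalence up to constants'' that does not hold.

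Approximate tensorisation implies the MLSI, since $\mathrm{Cov}_v(f,\log f)\ge\Ent_v(f)$, but the converse fails. Let $\nu$ be uniform on $\{0,e_1,\dots,e_n\}\subseteq\{0,1\}^n$. Glauber is then a lazy walk on a star, and every non-degenerate single-site conditional law is uniform on $\{0,1\}$.
\begin{itemize}
\item Taking $t=f(0)$ in $\Ent(f)\le\E[f\log(f/t)-f+t]$, and using $x\log(x/a)-x+a\le(x-a)\log(x/a)$, gives $\rho\ge\frac{1}{2n}$.
\item The test function $g=\mathbf{1}_{e_1}$ gives $\alpha\le\frac{1}{2n\log(n+1)}$.
\item Your own single-site argument (the uniform two-point space has log-Sobolev constant $\tfrac12$) shows the tensorisation constant is at most $2\alpha$, a factor $\Omega(\log n)$ below $\rho$.
\end{itemize}
That argument uses only single-site marginals, so it also cannot give $\alpha\gtrsim K(b)\rho$, even up to a universal constant. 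Your fallback therefore does not rescue the step.

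The fix is to use the paper's definition from Section~3 consistently: $\hat\alpha$ \emph{is} the approximate-factorisation constant, $\Ent_V(f)\le\frac{1}{\hat\alpha|V|}\sum_v\E_V[\Ent^\circ_v(f)]$. With this definition your last step is exactly right with $C=1$, and there is no constant to match. But (a) and (iii) must then be redone.
\begin{itemize}
\item For (a), your bound $\mathcal{E}(f,\log f)\ge 4\mathcal{E}(\sqrt f,\sqrt f)$ only gives $4\alpha\le\rho$, which says nothing about $\alpha$ versus $\hat\alpha\le\rho$. Instead, apply the one-site inequality $\Var(\sqrt g)\le\Ent(g)$ inside each $\E_V[\Ent^\circ_v]$. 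For $\E g=1$ it follows from $g\log g\ge 2g-2\sqrt g$, which gives $\Ent(g)\ge 2(1-\E\sqrt g)\ge 1-(\E\sqrt g)^2$.
\item For (iii), do not appeal to an unspecified continuous-to-discrete comparison. Write $P=\frac{1}{|V|}\sum_v\E^\circ_v$. Convexity of $\Ent$ and the chain rule $\Ent(\E^\circ_v f)=\Ent(f)-\E_V[\Ent^\circ_v f]$ give $\Ent(Pf)\le(1-\hat\alpha)\Ent(f)$. Pinsker then yields (iii) exactly, and (ii) follows.
\item For (i), no laziness is needed: $P$ is an average of orthogonal projections, hence positive semidefinite, so $\|P^t f-1\|_2\le(1-\gamma)^t\|f-1\|_2$.
\end{itemize}
In this normalisation your linearisation even gives $\hat\alpha\le\gamma$, and your Diaconis--Saloff-Coste comparison for $\alpha\ge K(\nu_{\min})\gamma$ is fine. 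The paper gives no proof beyond the citation, so a consistent assembly of these standard facts is all that is required.
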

While the proof of Theorem~\ref{thm:block-dstar} could be adapted to obtain a bound on the modified log-Sobolev constant, for our applications we will not lose much by using the following somewhat cruder bound that relates the log-Sobolev constant and spectral gap on trees. A similar bound was shown in \cite[Theorem 5.7]{MSW03}, for completeness we give the proof in Section~\ref{sec:lemspectologSob}.
\newcommand{\statelemspectologSob}{Let $T=(V,E)$ be a rooted tree of height $h$ and $\nu$ be a Gibbs distribution supported on $\Omega\subseteq [q]^V$ that is $b$-marginally bounded. Let $P$ denote the Glauber dynamics for $\nu$ and $\alpha$ denote its log-Sobolev constant.

For a vertex $v\in V$ and $\tau\in \Omega_\nu$, let $P^{\tau}_{T_v}$ denote the Glauber dynamics for $\nu^\tau_{T_v}$ and let $\gamma^* = \min_{v,\tau} \big\{\frac{|T_v|}{|T|}\gamma(P^{\tau}_{T_v})\big\}$.  Then $\alpha\geq \frac{1}{2h}  \frac{1-2b}{\log ((1/b)-1)} \gamma^*$.}
\begin{lemma}\label{lem:spectologSob}
\statelemspectologSob
\end{lemma}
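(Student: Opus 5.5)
We prove the bound through entropy factorisation. Specifically, we show approximate block factorisation of entropy along the root-to-leaf structure, which gives the modified log-Sobolev constant. We then convert to the log-Sobolev constant using the last part of Lemma~\ref{lem:mixingtimebounds}, namely $\alpha\ge K(b)\hat\alpha$ with $K(b)=\frac{1-2b}{\log((1/b)-1)}$. The target is therefore $\hat\alpha\ge \gamma^*/(2h)$, i.e.\ for every $f\ge 0$,
\[
\Ent_\nu(f)\le \frac{2h}{\gamma^*}\,\mathcal{E}(f,\log f)
\]
up to the normalisation of the Dirichlet form used in the definition of $\hat\alpha$. (Alternatively, one can aim directly for $\alpha$ and use the Diaconis--Saloff-Coste comparison between entropy and variance at the level of single-site conditional measures; this is where the $b$-dependent factor would enter.)

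The key step is an induction on height, following \cite[Theorem 5.7]{MSW03}. Let $r$ be the root with children $c_1,\dots,c_k$. Since $\nu$ is a Gibbs distribution on a tree, conditionally on $\sigma_r$ the subtrees $T_{c_i}$ are independent. Hence the chain rule gives
\[
\Ent_\nu(f)=\E_\nu\bigl[\Ent_{\nu^{\sigma}_{T\setminus\{r\}}}(f)\bigr]+\Ent_{\nu_r}\bigl(\E[f\mid\sigma_r]\bigr),
\]
and by tensorisation of entropy over the conditionally independent subtrees the first term is at most $\sum_i\E_\nu[\Ent_{\nu^\sigma_{T_{c_i}}}(f)]$. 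Each such term can be handled by the inductive hypothesis on $T_{c_i}$, which has height $h-1$, with an arbitrary boundary $\tau$. The remaining term $\Ent_{\nu_r}(\E[f\mid\sigma_r])$ is the marginal entropy at the root. We bound it by $\E_\nu[\Ent_{\nu_{\{r\}}^\sigma}(f)]$ plus a term controlled by the whole-tree spectral gap. The spectral gap is where $\gamma^*$ enters: for $v=r$ the ratio $|T_v|/|T|$ equals $1$, so $\gamma^*\le\gamma(P)$.

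Concretely, I would first establish a variance-type inequality for the root marginal: $\Var_{\nu_r}(\E[f\mid\sigma_r])\le \frac{|T|}{\gamma^*}\cdot\frac{1}{|T|}\sum_v\E\Var_v(f)$. I would then upgrade it to entropy, either by applying it to $\sqrt f$ or by using the Rothaus-type inequality. The upgrade costs the factor $K(b)^{-1}$, because every conditional marginal on the root has mass at least $b$.

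Unrolling the induction yields $\Ent(f)\le\sum_{\ell=0}^{h}\sum_{v\text{ at depth }\ell}(\ldots)$. Each vertex contributes once per ancestor level, which gives at most $h$ (or $h+1\le 2h$) levels. At a subtree $T_v$ the gap bound $\gamma(P^\tau_{T_v})\ge\gamma^*|T|/|T_v|$ converts the local Dirichlet form, which is normalised by $|T_v|$, into the global one, which is normalised by $|T|$; this normalisation is exactly why $\gamma^*$ is defined with the factor $|T_v|/|T|$. Summing the contributions gives $\alpha\ge\frac{1}{2h}K(b)\gamma^*$.

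The main obstacle is the root-marginal term. Converting a spectral gap on the subtree into an entropy bound for the projected function $\E[f\mid\sigma_v]$, with only the factor $K(b)$ lost and no dependence on $\nu_{\min}$, needs care. I would apply the log-Sobolev/spectral-gap comparison of Lemma~\ref{lem:mixingtimebounds} only to the single-site, $b$-bounded measure $\nu_v$, never to the full measure, so that the constant depends on $b$ rather than on $\nu_{\min}$.
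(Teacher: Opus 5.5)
\textbf{Verdict.} Your ``concrete'' argument is essentially the paper's proof. Your headline plan, however, does not work, and the variance-to-entropy step is missing one ingredient.

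\textbf{What matches the paper.} The paper uses the chain rule at $v$, tensorises over the child subtrees with the inductive bound, applies the single-site log-Sobolev inequality for the $b$-bounded marginal at $v$, and then uses the gap of $P^\tau_{T_v}$. This gives the recursion $\frac{1}{|T_v|\alpha(P^\tau_{T_v})}\le \frac{C(b)}{|T_v|\gamma(P^\tau_{T_v})}+\max_{u\prec v,\eta}\frac{1}{|T_u|\alpha(P^\eta_{T_u})}$ with $C(b)=1/K(b)$, unrolled over at most $h+1\le 2h$ levels.

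\textbf{The headline target is false.} You propose to prove $\hat\alpha\ge\gamma^*/(2h)$ and then convert via $\alpha\ge K(b)\hat\alpha$. Here $\hat\alpha$ is the entropy-factorisation constant of Lemma~\ref{lem:mixingtimebounds}, the one for which that conversion holds. The target is false in general, so the $b$-loss cannot be deferred to a final conversion.

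Take the single edge $r$--$c$ ($h=1$, $q=2$) with $\nu(00)=1-\delta-2\epsilon$, $\nu(01)=\nu(10)=\epsilon$, $\nu(11)=\delta$, and $\epsilon\le\delta\le\tfrac14$.
\begin{itemize}
\item Bounding $\Var g\le \E(g-g(00))^2$ and routing $11$ through $01$ gives $\Var g\le \frac{6\delta}{\epsilon}\bigl(\E\Var^\circ_r g+\E\Var^\circ_c g\bigr)$. Hence $\gamma^*\ge \epsilon/(12\delta)$, since the single-vertex subtree has gap $1$.
\item For $f=\mathbf{1}_{\{11\}}$ we have $\Ent f=\delta\log(1/\delta)$, while $\E\Ent^\circ_r f+\E\Ent^\circ_c f=2\delta\log(1+\epsilon/\delta)\le 2\epsilon$. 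So $\hat\alpha\le \epsilon/(\delta\log(1/\delta))$.
\item This is below $\gamma^*/2$ once $\log(1/\delta)>24$.
\end{itemize}
The example is consistent with the lemma, since $b\approx\epsilon$ and $K(b)\approx 1/\log(1/\epsilon)$.

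The $b$-dependence must therefore be paid exactly once, when the root-marginal entropy is turned into a variance. Your text charges $K(b)$ both at the root step and in the final conversion; drop the $\hat\alpha$ framing and argue for $\alpha$ directly, as in your ``alternative''.

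\textbf{The missing ingredient in the upgrade step.} The single-site inequality gives $\Ent^\tau_{T_v}(\E^\circ_{T'_v}f)\le C(b)\Var^\tau_{T_v}\bigl(\sqrt{\E^\circ_{T'_v}f}\bigr)$. Applying your root-marginal variance bound to $\sqrt f$ controls $\Var(\E[\sqrt f\mid\sigma_v])$ instead. That is a different quantity and can be strictly smaller, so it does not close the step.

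What you need is $\Var\bigl(\sqrt{\E[f\mid\sigma_v]}\bigr)\le \Var(\sqrt f)$. Both sides equal $\E f$ minus a square, and $\E\sqrt{\E[f\mid\sigma_v]}\ge \E\sqrt f$ by conditional Jensen. Then apply the gap of $P^\tau_{T_v}$ to $\sqrt f$. This is exactly the paper's step.

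With this, the whole root-marginal entropy is absorbed, so the extra term $\E_\nu[\Ent_{\nu^\sigma_{\{r\}}}(f)]$ you mention is unnecessary. The recursion must also be run at every $v$ and every boundary $\tau$ using $\gamma(P^\tau_{T_v})$, not only the whole-tree gap; this is why $\gamma^*$ carries the weight $|T_v|/|T|$.
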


\subsection{Mixing time on Poisson Trees}
\label{sec: mixing trees}
To obtain a suitable block cover of Poisson trees, the guiding idea is that sufficiently dense parts of the Poisson tree should behave similarly to the regular tree, while the effect of sparse regions should be absorbed by the dense parts. To capture this, we partition the vertices of a tree  into ``good'' and ``bad'' vertices; roughly a vertex $v$ is good if the subtree rooted at $v$ contains a $D$-regular tree as a subgraph. Throughout this section we use $D=\lceil (1-\epsilon)d\rceil$ where $\epsilon>0$ will be taken to be a small constant.

\begin{definition}[$T^h$ and $h$-good]\label{def:good-bad-vertices}   Let $T$ be a tree rooted at $r$, and $h,D>0$ be integers. Let $T^h$ be the subtree induced by the vertices of $T$ at depth $\leq h$. Moreover,
\begin{itemize}
\item All vertices at depth exactly~$h$ are $h$-good.
\item A vertex at depth less than $h$ is $h$-good if
it has at least~$D$  children that are $h$-good.
\end{itemize}
When   $T$ is fixed, we use
$ V^h_\mathrm{good}$ to denote the set of $h$-good vertices in~$V(T)$.
\end{definition} 

Our interest in good vertices is justified by the following upper bound on the probability that they do not have the $\pl$ colour. Lemma~\ref{lem:marginal} is proved in Section~\ref{sec:marginal}.
\newcommand{\statelemmarginal}{Let $\xi>1$ and $\epsilon,\delta\in (0,1)$ be reals. Then, for all large enough $d$ and integers  $q$ satisfying $q^{1-\xi} \leq \frac{\delta \xi}{2(1-\delta)D}\log q$, for all real \(\beta > 0\) with \(\emm^\beta\geq q^{\frac{\xi}{(1-\delta)D}}\), the following holds for an arbitrary tree $T$ rooted at a vertex $r$ and integer height $h\geq 1$.

Let $\mu^\pl=\mu^{\pl}_{T^h;q,\beta}$ be the Potts distribution on the truncated tree $T^h$ with the monochromatic boundary condition. Let  $\hat p \coloneqq \max_{v\in V^h_\mathrm{good}\backslash\{r\}, j\in[q]} \mu\big(\sigma_v\neq \pl\mid \sigma_{\mathrm{parent}(v)}=j\big)$, then $\hat p\leq 2q^{1-\xi}$.}
\begin{lemma}\label{lem:marginal}
\statelemmarginal
\end{lemma}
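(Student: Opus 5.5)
The plan is a bottom-up induction over the $h$-good vertices, using the tree Markov property. Given $\sigma_{\mathrm{parent}(v)}=j$, the configuration on $T_v\cap T^h$ is independent of the rest of the tree. It is distributed as the Potts measure on that subtree with the $\pl$ boundary at depth $h$ and an external field $\emm^{\beta\mathbf 1\{\sigma_v=j\}}$ at $v$. For each vertex $w$, let $\nu_w$ be the law of $\sigma_w$ under the Potts measure on $T_w\cap T^h$ with the edge to the parent removed. Then
\[
\mu(\sigma_w=k\mid \sigma_{\mathrm{parent}(w)}=j)=\frac{\emm^{\beta\mathbf 1\{k=j\}}\nu_w(k)}{1+(\emm^\beta-1)\nu_w(j)},
\qquad
\nu_v(c)\propto \prod_{w\text{ child of }v} m_w(c),
\]
where $m_w(c)=1+(\emm^\beta-1)\nu_w(c)$. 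I will set $p_w\coloneqq\max_j\mu(\sigma_w\neq\pl\mid\sigma_{\mathrm{parent}(w)}=j)$ and prove $p_w\le 2q^{1-\xi}$ for every good non-root $w$, by induction on decreasing depth. The base case is depth $h$, where $p_w=0$.

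\emph{Step 1: bad children never hurt.} By a separate induction over the same recursion, I will show $\nu_w(c)\le\nu_w(\pl)$ for every vertex $w$ and every colour $c$. At depth $h$ this holds trivially. Inductively, each child factor satisfies $m_u(c)\le m_u(\pl)$, hence $\nu_w(c)\le \nu_w(\pl)$. Equivalently, one can swap colours $c\leftrightarrow\pl$ on the subtree and observe that this only loses boundary weight. Consequently every child $w$, good or bad, contributes a ratio $m_w(c)/m_w(\pl)\le 1$.

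\emph{Step 2: good children contract.} For a good child $w$ I will relate $\nu_w$ to $p_w$. Taking a parent colour $j\neq\pl$ in the displayed formula gives $\nu_w(\pl)\ge\mu(\sigma_w=\pl\mid\sigma_{\mathrm{parent}(w)}=j)\ge 1-p_w$. Taking parent colour $c$ gives $\nu_w(c)\le\mu(\sigma_w=c\mid \sigma_{\mathrm{parent}(w)}=c)\le p_w$. Hence
\[
\frac{m_w(c)}{m_w(\pl)}\le\frac{1+\emm^\beta p_w}{\emm^\beta(1-p_w)}=\frac{\emm^{-\beta}+p_w}{1-p_w}.
\]
Next, for a good $v$ with parent colour $j$, I will bound
\[
\mu(\sigma_v\ne\pl\mid j)\le\sum_{c\ne\pl}\emm^{\beta\mathbf 1\{c=j\}}\prod_{w\text{ good child}}\frac{m_w(c)}{m_w(\pl)}\le (q-1+\emm^\beta)\Bigl(\frac{\emm^{-\beta}+2q^{1-\xi}}{1-2q^{1-\xi}}\Bigr)^{D},
\]
where the second inequality uses Step 1 to discard the bad children and the induction hypothesis for the at least $D$ good children.

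\emph{Step 3: closing the induction (the main obstacle).} The bound must be closed to give $\le 2q^{1-\xi}$ uniformly in $\beta$, and the extra factor $\emm^\beta$ from $c=j$ is the delicate point when $\beta$ is large. I would first rewrite the per-child factor as $\emm^{-\beta}(1+\emm^\beta p)/(1-p)$. For the $c=j$ term I would spend one of the $D$ factors $\emm^{-\beta}$ to cancel $\emm^\beta$, which leaves at most $\emm^{-\beta(D-1)}$. If this is not sharp enough, I would strengthen the induction hypothesis to $p_w\le \min\{2q^{1-\xi},\,C\emm^{-\beta}\}$-type bounds, so that $\emm^\beta p_w=O(1)$ in the large-$\beta$ regime.

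In the regime where $\emm^\beta$ is near its lower limit $q^{\xi/((1-\delta)D)}$, I would write $(\emm^{-\beta}+2q^{1-\xi})^D\le \emm^{-\beta D}(1+2q^{1-\xi}\emm^{\beta})^D$ and split $\emm^{-\beta D}=\emm^{-\beta(1-\delta)D}\,\emm^{-\beta\delta D}\le q^{-\xi}\emm^{-\beta\delta D}$. The factor $(1+O(q^{1-\xi}))^{D}$ is then handled exactly by the hypothesis $q^{1-\xi}\le\frac{\delta\xi}{2(1-\delta)D}\log q$: it gives $\exp(O(Dq^{1-\xi}))\le q^{\delta\xi/(1-\delta)}$, which is absorbed by $\emm^{-\beta\delta D}\le q^{-\delta\xi/(1-\delta)}$. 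The prefactor $q$ combines with $q^{-\xi}$ to give $q^{1-\xi}$, and large $d$ absorbs the constants and the $(1-p)^{-D}$ term, so the total is at most $2q^{1-\xi}$. I expect most of the work to lie in choosing the right interpolation between these two regimes of $\beta$.
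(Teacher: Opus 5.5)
There is a genuine gap. Your recursion and Step~1 are fine, but the problem starts in Step~2, not Step~3. The estimate $\nu_w(c)\le\mu(\sigma_w=c\mid\sigma_{\mathrm{parent}(w)}=c)\le p_w$ throws away a factor $\emm^\beta$. By your own formula, $\mu(\sigma_w=c\mid\sigma_{\mathrm{parent}(w)}=c)=\emm^\beta\nu_w(c)/m_w(c)$, so $\nu_w(c)$ is really of order $\emm^{-\beta}p_w$.

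As a result, whenever $p_w\gg\emm^{-\beta}$ your per-child factor $(\emm^{-\beta}+p_w)/(1-p_w)$ is of order $p_w$, not $\emm^{-\beta}$. Your final bound $(q-1+\emm^\beta)\bigl(\frac{\emm^{-\beta}+2q^{1-\xi}}{1-2q^{1-\xi}}\bigr)^D$ is then at least $\emm^\beta(2q^{1-\xi})^D$, which tends to infinity as $\beta\to\infty$. The lemma, however, must hold for every $\beta$ with $\emm^\beta\ge q^{\xi/((1-\delta)D)}$.

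The Step~3 patches do not repair this:
\begin{itemize}
\item Once $p_w$ dominates the per-child factor, there is no factor $\emm^{-\beta}$ left to ``spend''.
\item A hypothesis $p_w\le C\emm^{-\beta}$ only gives $(1+C)^D\emm^{-\beta D}$, and $(1+C)^D$ is not absorbed for constant $C$.
\item Your small-$\beta$ computation treats $\emm^\beta q^{1-\xi}$ as $O(q^{1-\xi})$ and drops the $\emm^\beta$ in the prefactor.
\item It also asserts that $(1-2q^{1-\xi})^{-D}$ is absorbed by taking $d$ large. Under the stated hypothesis $Dq^{1-\xi}$ may be of order $\log q$. So this factor alone can be about $q^{\delta\xi/(1-\delta)}$, which uses up all the slack $\emm^{-\beta\delta D}\le q^{-\delta\xi/(1-\delta)}$ and leaves nothing for your extra factor $(1+2\emm^\beta q^{1-\xi})^D$. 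With $\beta$ at its lower limit and the hypothesis on $q$ nearly tight, your chain overshoots $2q^{1-\xi}$ by roughly $q^{\delta\xi/(1-\delta)}$.
\end{itemize}

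The missing idea is to compare $m_w(c)$ with $m_w(\pl)$ directly, rather than bounding $\nu_w(c)$ on its own. Your formula with parent colour $c\neq\pl$ gives
\[
1-p_w\le\mu(\sigma_w=\pl\mid\sigma_{\mathrm{parent}(w)}=c)=\nu_w(\pl)/m_w(c),
\]
while $m_w(\pl)\ge\emm^\beta\nu_w(\pl)$. Hence $m_w(c)/m_w(\pl)\le\emm^{-\beta}/(1-p_w)$, and so
\[
\mu(\sigma_v\neq\pl\mid\sigma_{\mathrm{parent}(v)}=j)\le(q-2+\emm^\beta)\,\emm^{-\beta D}(1-2q^{1-\xi})^{-D}.
\]
This closes for all $\beta$ at once, in two pieces:
\begin{itemize}
\item $(q+\emm^\beta)\emm^{-\beta(1-\delta)D}\le q^{1-\xi}(1+o(1))$, using $\emm^{\beta(1-\delta)D}\ge q^{\xi}$ and $D$ large.
\item $\emm^{-\beta\delta D}(1-2q^{1-\xi})^{-D}\le q^{O(q^{1-\xi})}=1+o(1)$, using the hypothesis on $q$, which forces $q\to\infty$.
\end{itemize}

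This is essentially the paper's proof. It works with $z_v=Z^{\mi}_v/Z^{\pl}_v$; your $m_w(c)/m_w(\pl)$ is exactly $f(z_w)$. It inducts on $z_v\le\emm^{-\beta(1-\delta)D}$ for good $v$. Each good child contributes $f(z_w)\le\emm^{-\beta}\bigl(1+(q+\emm^\beta)z_w\bigr)\le\emm^{-\beta}(1+\delta\beta)\le\emm^{-\beta(1-\delta)}$, so the $\delta$-slack is spent child by child rather than globally. Finally $\hat p\le(q+\emm^\beta)z_v\le 2q^{1-\xi}$, with no case split in $\beta$.
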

 We next construct a rooted block cover of $T$, based on $h$-good vertices. 

\begin{definition}[Block cover of $T^h$]\label{def:level}
 Let $T$ be a tree rooted at $r$, and $h,D,\ell>0$ be integers. Let $T^h$ be the subtree induced by the vertices of $T$ at depth at most $h$. 
 
 For $v\in V(T^h)$, let $T^h_v$ be the subtree of $T^h$ rooted at $v$ and define $T^h_{v,\ell}$  to be the subtree of $T^h_v$ consisting of vertices $u\in V(T^h_v)$ such that the path from $v$ to $u$ contains  $\leq \ell$ vertices that are $h$-good, excluding the root $v$ from the count. 
\end{definition}

Intuitively, any path from $v$ to a descendant outside of $T^h_{v,\ell}$ stops after crossing $\ell$ good vertices. Using this and the bound in Lemma~\ref{lem:marginal} for good vertices,  we will be able to show the nonreconstruction property on $T^h_{v,\ell}$.

To bound the local mixing time of the blocks, we will use the following lemma. While the technical proof is relatively standard, it has a rather important role in our argument since, for a block of size $N=(\log n)^{O(1)}$, it will allow us to improve upon a ``naive'' $n^{O(1)}$ mixing time bound which can be obtained by a recursive root-to-leaf path argument. The latter bound is exponential in the depth of the tree (which in our case can be as large as $\Theta(\log n)$). Instead, the main insight behind Lemma~\ref{lem:localspectral} is to use a centroid decomposition of the underlying block that significantly reduces  the ``depth'' of the recursion in the argument and hence obtain an $N n^{o(1)}$ upper bound on the mixing time.  The proof of Lemma~\ref{lem:localspectral} below is given in Section~\ref{sec:local mixing}.
\begin{lemma}\label{lem:localspectral}
Let $T$ be a tree and $T'$ be a subtree of $T$ with $N$ vertices. Suppose that for every subset $S$ of vertices with  $|S|\leq \lceil\log_2N\rceil$ it holds that $\deg_T(S)\leq W$. 

Then, for any boundary condition on the outside of $T'$,  the spectral gap of Glauber dynamics on $T'$ is at least $1/\big(Nq^{O(\log N)}\emm^{O(\beta W)}\big)$.
\end{lemma}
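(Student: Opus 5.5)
We prove Lemma~\ref{lem:localspectral} by a recursive block-dynamics argument driven by a centroid decomposition of $T'$. The recursion has depth only $\lceil\log_2 N\rceil$, instead of the height of $T'$, which may be $\Theta(\log n)$.

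\textbf{Centroid decomposition.} Every tree on $N$ vertices has a centroid vertex $c$ whose removal leaves components of size at most $N/2$. Recursing inside each component gives a hierarchy of depth $L\le\lceil\log_2N\rceil$. Any vertex $u$ lies in exactly one component at each level. The centroids that are chosen above $u$ and are adjacent to $u$'s current component form a set of at most $L$ vertices.

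\textbf{One step of the recursion.} Let $A$ be a component at some level, with arbitrary boundary condition outside $A$. Let $c$ be its centroid and $A_1,\dots,A_k$ the components of $A\setminus\{c\}$. Consider the two-block dynamics with blocks $\{c\}$ and $A\setminus\{c\}$. Given the boundary, the second block is a product over the $A_i$, so its Glauber dynamics tensorises. We bound the gap of this two-block chain as follows.
\begin{itemize}
\item Each time the second block is resampled, the colour of $c$ can be changed with probability at least $q^{-1}\emm^{-\beta\deg_T(c)}$. This holds whatever its neighbours are, since the single-site conditional law at $c$ is bounded below by $\emm^{-\beta\deg(c)}/q$.
\item Hence the two-block chain mixes in $O(q\,\emm^{O(\beta\deg c)})$ steps. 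A direct way to see this is a coupling on the spin at $c$: once the two copies agree at $c$, the second block is resampled exactly from the same conditional law.
\item Applying the standard block-dynamics comparison (approximate variance factorisation),
\[
\gamma(A)\ \ge\ \frac{|A_{\max}|}{|A|}\cdot\frac{c_0}{q\,\emm^{O(\beta\deg_T(c))}}\cdot\min_{i,\tau}\gamma\bigl(A_i^{\tau}\bigr),
\]
with the size normalisation chosen so that the factor $N$ appears only once in the final bound.
\end{itemize}

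\textbf{Unrolling.} Unrolling the recursion multiplies the losses $q\,\emm^{O(\beta\deg c_j)}$ along a chain of nested centroids $c_1,\dots,c_L$. The degrees $\deg_T(c_1),\dots,\deg_T(c_L)$ are those of a set $S$ with $|S|\le\lceil\log_2N\rceil$, so their sum is at most $W$ by hypothesis. The total loss is therefore $q^{O(\log N)}\emm^{O(\beta W)}$. The base case is a single vertex, whose heat-bath gap is $1$. The rescaling between continuous-time block gaps and discrete-time Glauber contributes the factor $1/N$.

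\textbf{Main obstacle.} The delicate step is making the per-level loss depend only on $\deg_T(c)$, with no dependence on the boundary or on $|A|$. This requires two things:
\begin{itemize}
\item The variance/covariance between the two blocks must be controlled uniformly in the boundary condition. I will do this through the Gibbs property, since the conditional law of $c$ depends only on its at most $\deg_T(c)$ neighbours.
\item The normalisations $|A_i|/|A|$ must telescope correctly.
\end{itemize}
I would first try the Martinelli-style block-dynamics lemma, $\gamma_{\mathrm{Glauber}}\ge\gamma_{\mathrm{block}}\cdot\min_{\text{blocks}}\gamma$, together with a coupling bound on $\gamma_{\mathrm{block}}$. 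If that fails, I would fall back on a canonical-paths argument along the centroid hierarchy.
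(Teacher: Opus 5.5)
Your proposal follows essentially the paper's proof. Both use a centroid decomposition, a coupling bound on the block dynamics around the current centroid $c$, Martinelli's block-versus-single-site comparison (Lemma~\ref{prop:block_rec_comp}), and unrolling along nested centroid chains of length at most $\lceil\log_2 N\rceil$ whose degree sum is at most $W$. Your blocks $\{c\}$ and $A\setminus\{c\}$ are a harmless, slightly cleaner variant of the paper's blocks $\{c_0\},V(T^{(1)}),\dots,V(T^{(d_0)})$, since the single-site dynamics on $A\setminus\{c\}$ is a product over the $A_i$.

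The one thing to fix is your displayed discrete-time inequality with $\frac{|A_{\max}|}{|A|}\min_{i,\tau}\gamma(A_i^\tau)$, which is not valid as written. The product structure only gives a bound through $\min_i |A_i|\,\gamma(A_i^\tau)$, which can be much smaller than $|A_{\max}|\min_i\gamma(A_i^\tau)$. As you anticipate, run the whole recursion with continuous-time relaxation times, where $\trelct(A)\le q^{O(1)}\emm^{O(\beta\deg_T(c))}\max_i\trelct(A_i)$ carries no size factors. Then convert to discrete time once at the end to pick up the single factor $N$, exactly as the paper does.
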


The final piece is the following Lemma~\ref{lem:degree-sum}, where we capture some key properties of the blocks, the proof is given in Section~\ref{sec:tree-properties}. For a tree $T$ and a subset $S$ of $V(T)$, we use $\deg_T(S)$ to denote the sum of the degrees of the vertices in~$S$.

\newcommand{\statelemdegreesum}{Let $d>1$ be sufficiently large. For any $\kappa, K, C_0>0$, there is $C_1>0$ so that the following holds for large enough~$n$.   

Let $h = \lfloor K \log_d n \rfloor$, 
$\ell \coloneqq \lfloor C_0\log\log n\rfloor $, and $N \coloneqq (6d)^\ell(\log n)^2$. Let $T$ be a Galton--Watson tree with offspring distribution $\mathrm{Poi}(d)$.
For  $v\in V(T^h)$, let 
$\cE_v \coloneqq \cE(T^h_{v,\ell})$ be the~event~that
\begin{enumerate}[(i)]
\item $|V(T^h_{v,\ell})| \leq N$ and, 
\item for every  $S\subseteq V(T^h_{v,\ell})$ with 
$|S| \leq \log_2 (2N)$, 
$\deg_{T^h}(S) \leq C_1 \frac{\log n}{\log\log n}$.
\end{enumerate}
Then   
$\Pr\Big(\bigcap_{v\in V(T^h)}\cE_v\Big) \geq 1-n^{-\kappa}$.}
\begin{lemma}  \label{lem:degree-sum}
\statelemdegreesum
\end{lemma}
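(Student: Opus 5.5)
The plan is to fix a single vertex $v$, show that $\Pr(\cE_v^c)\le n^{-\kappa-K-2}$, and then take a union bound over the vertices of $T^h$. The union bound needs control of $|V(T^h)|$. Its expectation is at most $\sum_{i\le h} d^i = O(n^{K})$, so Markov's inequality gives $|V(T^h)|\le n^{K+1}$ except with probability $n^{-1}$. That is too weak for a general $\kappa$, so I would instead avoid bounding $|V(T^h)|$ at all. Index vertices by their Ulam--Harris labels and write $\Pr(\exists v: \cE_v^c)\le \sum_{v}\Pr(v\in T^h,\ \cE_v^c)$. Conditioned on $v\in T^h$, the subtree $T_v$ is again a Poisson tree, by the branching property. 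Hence $\sum_v \Pr(v\in T^h,\cE_v^c)\le \E|V(T^h)|\cdot \max_{i\le h}\Pr(\cE^c\text{ for the root of a Poisson tree truncated at } h-i)$, and this is at most $O(n^{K})\max_{h'\le h} \Pr(\cE_r^c(h'))$. So it suffices to show that the root of a Poisson tree truncated at any $h'\le h$ fails the event with probability $\le n^{-\kappa-K-1}$.

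For part (ii), I would bound degrees pointwise. A $\mathrm{Poi}(d)$ variable exceeds $t = C\log n/\log\log n$ with probability roughly $(ed/t)^t = n^{-C(1-o(1))}$. I would first establish (i). Then $T^h_{r,\ell}$ has at most $N=(\log n)^{O(1)}$ vertices, and each vertex's offspring count is an independent Poisson variable, revealed by exploring the block. A union bound over the at most $N$ explored vertices, taken before exceeding size $N$, shows that with probability $1-n^{-\kappa-K-2}$ every vertex in the block has at most $C\log n/\log\log n$ children. A subset $S$ of size $\log_2(2N)=O(\log\log n)$ could then still have degree sum $O(\log n)$, which is too much. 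The fix is to use instead a multi-vertex tail: for a fixed set $S$ of $s$ vertices, the total offspring is $\mathrm{Poi}(sd)$, and $\Pr(\mathrm{Poi}(sd)\ge t)\le (esd/t)^t = n^{-C_1(1-o(1))}$ because $sd = O(\log\log n)\ll t$. The number of subsets is at most $N^{s}=\exp(O((\log\log n)^2))=n^{o(1)}$. Choosing $C_1$ large therefore gives (ii) with room to spare. To make this rigorous with an adaptively explored block, I would dominate the block by the first $N$ vertices of the exploration in breadth-first order, whose offspring counts are i.i.d.\ $\mathrm{Poi}(d)$.

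The main obstacle is part (i): bounding the size of $T^h_{r,\ell}$. A path from $r$ stays in the block while it has crossed at most $\ell$ good vertices, but bad vertices can form long stretches. So I need that bad regions are small and do not proliferate. The good/bad status is determined bottom-up from depth $h$, so I would condition on it via a multitype branching description. Call a vertex bad if it has fewer than $D$ good children. For $d$ large, $\Pr(\text{good})\to 1$, since $D=(1-\epsilon)d$ is below the mean, so a fixed-point/concentration argument gives $\Pr(\text{bad})\le e^{-c d}$. Conditioned on the type of a vertex, its numbers of good and bad children are independent Poisson variables: thinning gives $\mathrm{Poi}(dp_g)$ and $\mathrm{Poi}(dp_b)$, conditioned on the good count being $\ge D$ or $<D$ respectively. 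The expected number of bad children of any vertex is therefore about $d e^{-cd} <1/2$. So bad clusters form subcritical trees with exponential tail of size. The expected number of good children is at most about $d$ given good and fewer than $D$ given bad. The block is then a tree in which each root-to-boundary path contains at most $\ell$ good vertices, each followed by a subcritical bad cluster. I would bound the exponential moment recursively: define $Z_j$ as the block size allowed to cross $j$ more good vertices, and show $\E[\lambda^{Z_j}]$-type bounds, or more simply $\E Z_j\le (6d)^j$ with a matching tail obtained from a recursion whose per-level multiplier is $\le 6d$, absorbing bad clusters (mean size $O(1)$) into the constant. To get tail probability $n^{-\kappa-K-1}$ rather than just a Markov bound, I would combine the $(6d)^\ell$ factor with the $(\log n)^2$ slack. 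Concretely, I would show that the exploration of $T^h_{r,\ell}$ can be dominated by a process whose size has exponential moment $\E\exp(Z/((6d)^\ell\log n))=O(1)$; this would give the tail $\exp(-\log n\cdot \Omega(\log n))$, which is more than enough. I expect verifying this exponential moment uniformly in the conditioning on good/bad types to be the delicate step.
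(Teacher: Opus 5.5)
Your architecture matches the paper's. Both use a first-moment union bound over vertices with $\E|V(T^h)|=O(d^h)$; your Ulam--Harris version is a cleaner rendering of the paper's last step. Both use a top-down two-type description with subcritical bad clusters, a compressed branching process with per-generation factor $6d$ for (i), and a Poisson tail plus a union bound over $N^{O(\log\log n)}=n^{o(1)}$ index sets for (ii).

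However, the decisive estimate for (i) is missing, and the version you aim for is too weak. Since $N/((6d)^\ell\log n)=\log n$, the bound $\E\exp\bigl(Z/((6d)^\ell\log n)\bigr)=O(1)$ gives via Markov only $\Pr(Z\ge N)=O(1/n)$, not $\exp(-\Omega(\log^2 n))$. Because $N$ is fixed by the statement, this cannot reach $n^{-\kappa-K-1}$; a first-moment bound $\E Z\le(6d)^\ell$ gives only $O((\log n)^{-2})$. You need the exponential moment at scale $(6d)^{\ell}$ (up to constants), which is exactly the step you left as ``delicate''. The paper dominates the down-good vertices of $T^{\mathrm{down}}_{v,\ell}$ by $\ell$ generations, and the down-bad ones by $\ell+1$ generations, of a Galton--Watson process with offspring $D+X_{\mathrm{good}}+D\sum_{i\le X_{\mathrm{bad}}}S_i$, where the $S_i$ are Borel with parameter $dp_{\mathrm{bad}}=\mathrm{e}^{-\Omega(d)}$. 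Its pgf is $f(t)=g(t^D)\mathrm{e}^{dp_{\mathrm{good}}(t-1)}$. Induction on $f_{\ell+1}(t)=t f(f_\ell(t))$, using $g(\mathrm{e}^z)\le\mathrm{e}^{2z}$ for $0\le z\le 1$, gives $f_\ell(\mathrm{e}^{\theta/(6d)^\ell})\le\mathrm{e}^{\theta}$ for $\theta\le1$. Hence $\Pr\bigl(Z_{\ell+1}\ge\tfrac12(6d)^\ell(\log n)^2\bigr)\le\mathrm{e}\,n^{-\log n/(12d)}$.

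Your justification of (ii) also fails as written: the offspring counts of block vertices are not i.i.d.\ $\mathrm{Poi}(d)$, which even contradicts your own typed description in (i). Block membership is decided by good/bad types, which depend on the subtrees (a good vertex has at least $D$ good children), so exploring the block biases the counts. The alternative reading also fails: the first $N$ vertices of a breadth-first search of all of $T^h_r$ need not contain the block. That search exhausts the roughly $d^{m-1}$ vertices above depth $m$, which exceeds $N$ already for some $m=\ell+O(\log\log n)$. Chains of bad vertices carry the block to such depths with probability $(\log n)^{-O(1)}$, far above $n^{-\kappa-K-1}$.

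The repair, as in the paper, is to explore in the top-down typed process, where each vertex's type is revealed before its children, so the block is determined by revealed information. Given the history, each explored vertex's offspring count is stochastically dominated by $D+\mathrm{Poi}(d)$; the paper shows that $\mathrm{Poi}(\lambda)$ conditioned on being at least $D$ is dominated by $D+\mathrm{Poi}(\lambda)$. A sequential coupling with i.i.d.\ $D+\mathrm{Poi}(d)$ variables then gives your subset tail with $t$ replaced by $t-sD$. This is harmless, since $sD=O(d\log\log n)$.

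A smaller point: typing vertices directly by $h$-goodness makes the good-probability $p_j$ depend on the distance $j$ to level $h$. You would then need $1-p_j\le\mathrm{e}^{-cd}$ uniformly in $j$, which follows by induction from $p_0=1$. The paper sidesteps this by noting that having an infinite $D$-ary subtree implies being $h$-good. This gives $T^h_{v,\ell}\subseteq T_{v,\ell}\subseteq T^{\mathrm{down}}_{v,\ell}$ for a homogeneous two-type process.
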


We are ready to give the proof of Theorem~\ref{thm:main1}.
\begin{proof}[Proof of Theorem~\ref{thm:main1}]
Fix $\theta>1$ and $\kappa,K>0$. Choose $\xi$ such that
$1+\frac{1}{\theta} < \xi< 1+\frac{1.01}{\theta}$ and
$\epsilon,\delta\in(0,1)$ sufficiently small so that, for all
large enough $d$, with $D=\lceil(1-\epsilon)d\rceil$, the assumption
$\beta>\left(1+\frac{1.01}{\theta}\right)\frac{\log q}{d}$ 
implies $\emm^\beta\ge q^{\frac{\xi}{(1-\delta)D}}$. We use Lemma~\ref{lem:marginal} with parameter $\xi$. Note that $q^{1-\xi}\leq q^{-1/\theta}$ so by taking $d$ large we have that the condition $q^{1-\xi} \leq \frac{\delta \xi}{2(1-\delta)D}\log q$ is satisfied. Set $p \coloneqq 2q^{1-\xi}$; since $q\ge d^\theta$ and $\xi>1+1/\theta$,  by taking $d$ sufficiently large we have that $p\le 2d^{-\theta(\xi-1)}<1$.

Let $h=\lfloor K\log_d n\rfloor$ be the truncation height in the statement, and let $\ell=\lfloor C_0\log\log n\rfloor$ where $C_0$ is a sufficiently large constant that  we will choose later. Apply Lemma~\ref{lem:degree-sum} with parameters $\kappa+1, K,C_0$ to obtain a constant $C_1$ so that the conclusion therein holds. Let $N=(6d)^\ell(\log n)^2$  and $W=C_1\frac{\log n}{\log\log n}$.

Let $\mathcal E_0$ be the event that $|V(T^h)|\le n^M$, where $M=\kappa+1+2K$. Since $\E[V(T^h)|]= O( d^h)$, by Markov's inequality,
$\mathbb P(\mathcal E_0)\ge 1-n^{-\kappa-1}$ for all large enough $n$. Let
$\mathcal E_1$ be the event that, for every $v\in V(T^h)$, the event
$\mathcal{E}_v=\mathcal E(T^h_{v,\ell})$ from Lemma~\ref{lem:degree-sum} holds. We have $\mathbb{P}(\mathcal E_1)\geq 1-n^{-\kappa-1}$, so a union bound
gives $\mathbb P(\mathcal E_0\cap\mathcal E_1)\ge 1-n^{-\kappa}$. We henceforth condition on the event~$\mathcal E \coloneqq \mathcal E_0\cap\mathcal E_1$.

We apply Theorem~\ref{thm:block-dstar} as follows. Let $\Vin=\Vin(T^h)$ be the vertices of $T^h$ with depth $\leq h-1$ and let $\nu$ be distribution on $[q]^{\Vin}$ obtained by $\mu^\pl_{T^h;q,\beta}$ (i.e., after imposing the monochromatic
boundary condition on the vertices at depth $h$). The Glauber dynamics for
$\mu^\pl_{T^h;q,\beta}$ is precisely the Glauber dynamics for $\nu$ on the subtree induced by $\Vin$. For $v\in \Vin$, define $B_v \coloneqq V(T^h_{v,\ell})\cap \Vin$
and note that $\partial B_v=
        \{u\in \Vin\setminus B_v:\mathrm{parent}(u)\in B_v\}.$
Then $\mathcal B=\{B_v\}_{v\in \Vin}$ is a rooted block cover of the tree induced by $\Vin$.

We first bound the cover radius of $\mathcal{B}$. Consider any
$u\in \Vin$. If $u\in B_v$, then $v$ is an ancestor of
$u$. Likewise, if $u\in \partial B_v$, then
$\mathrm{parent}(u)\in B_v$, and hence $v$ is again an ancestor
of $u$. It follows that $\{v\in \Vin:u\in B_v\cup\partial B_v\}
   \subseteq \{v\in V_{\mathrm{in}}:v\text{ is an ancestor of }u\}.$
Since the tree induced by $\Vin$ has height at most $h-1$,
the set on the right has size at most $h$. Therefore,~$R^*\leq h$.

Next we verify non-reconstruction. Since $\mathcal E_1$ holds, applying
Lemma~\ref{lem:degree-sum} with singleton sets shows that every vertex of $T^h$ has degree at most
$W$. Consequently $|\partial B_v|\le W|B_v|\le WN$.
For every $u\in\partial B_v$, by the definition of the block boundary, the path
from $v$ to $u$, excluding $v$, contains exactly $\ell+1$ vertices that are $h$-good. Using
Lemma~\ref{lem:marginal}  along these $h$-good vertices gives the disagreement bound
\[\big\|\nu(\sigma_u=\cdot\mid \sigma_v=i)
        -
        \nu(\sigma_u=\cdot\mid \sigma_v=j) \big\|_{\rm TV}\le 2p^{\ell+1}
        \qquad\text{for all }i,j\in[q].
\]
Indeed, a disagreement can pass from $v$ to $u$ only if none of the $\ell+1$
$h$-good vertices on the path from $v$ to $u$ takes the boundary colour $\pl$; at
each such $h$-good vertex this has conditional probability at most $p$ by Lemma~\ref{lem:marginal}, irrespectively
of the colour of its parent. It follows that the expected number of disagreements on $\partial B_{v}$ is at most $2|\partial B_v|p^{\ell+1}$ and hence
\[\big\|
        \nu_{\partial B_v}(\cdot\mid \sigma_v=i)
        -
        \nu_{\partial B_v}(\cdot\mid \sigma_v=j)
        \big\|_{\rm TV}
        \leq
        2|\partial B_v|p^{\ell+1}
        \leq
        2WN p^{\ell+1}
        \eqqcolon \rho .
\]
Since $p\le 2d^{-\theta(\xi-1)}$ with $\theta(\xi-1)>1$ and $WN\leq (6d)^\ell (\log n)^3$, we have $6dp<1$ for large enough $d$, so choosing $C_0$  large
gives for large $n$ that $\rho<\tfrac{1}{20}$ and      $10\rho R^*<1$.

We finally verify local mixing. Let $v\in V_{\rm in}$ and consider the block $B_v$.
By Lemma~\ref{lem:degree-sum}, every set $S\subseteq B_v$ with $|S|\le\lceil\log_2 N\rceil$ satisfies $\deg_{T^h}(S)\le W$. By Lemma~\ref{lem:localspectral}, for every~boundary condition outside $B_v$, the
spectral gap of the Glauber dynamics on $B_v$ is 
$\geq 1/\big(Nq^{O(\log N)}\emm^{O(\beta W)}\big)=1/n^{o(1)}$.
Thus the block cover satisfies local mixing with factor~$C^*=n^{o(1)}$.

We now have all the pieces to apply Theorem~\ref{thm:block-dstar}. To obtain the mixing time bound of Theorem~\ref{thm:main1}, it remains to pass from spectral gap to log-Sobolev. We use Lemma~\ref{lem:spectologSob}: observe that the same argument as above applies actually to every descendant subtree $T^h_w$
with an arbitrary boundary condition outside it since the  event
$\mathcal E$ is inherited by descendant subtrees.  Hence, in the notation of Lemma~\ref{lem:spectologSob},
$\gamma^* \geq \frac{1}{8R^*C^*|V_{\rm in}|}\geq \frac{1}{|V_{\rm in}|\,n^{o(1)}}$.
Since every vertex of $T^h$
has degree at most $W$, every one-site conditional marginal is bounded below by $b \coloneqq q^{-1}e^{-\beta W}$
so $\log(1/b)\le \log q+\beta W=n^{o(1)}$ and $\frac{1-2b}{\log((1/b)-1)}=\frac{1}{n^{o(1)}}$. By Lemma~\ref{lem:spectologSob}, and using that the height is at most $h=O(\log n)$, we obtain that
the log-Sobolev constant $\alpha$ of $P$ is at least $\frac{1}{2h}\frac{1-2b}{\log((1/b)-1)}\gamma^*\geq        \frac{1}{|V_{\rm in}|\,n^{o(1)}}$.

Finally, we bound the mixing time using Lemma~\ref{lem:mixingtimebounds}. Since $|V(T^h)|\le n^M$ and the maximum degree is at most $W$, we obtain that $\log\log\frac{1}{\nu_{\min}}=n^{o(1)}$, so
\[ T_{\rm mix}\leq 
\frac{1}{\alpha} \Big(\log\log\frac{1}{\nu_{\min}}+O(1)
        \Big)\leq
        |V_{\rm in}|\,n^{o(1)}
        \le
        |V(T^h)|\,n^{o(1)}.
\]
This yields the desired bound, completing the proof.
\end{proof}

\subsection{Application: Fast Sampling on \texorpdfstring{$G(n,d/n)$}{G(n, d/n)} at all temperatures}
\label{sec: application}

To obtain a fast approximate sampling algorithm for the Potts model on $G(n,d/n)$, we use the random cluster (RC) representation and the corresponding RC Glauber dynamics. Using the near-optimal bound on the mixing time of the Potts Glauber dynamics on the wired Poisson tree in Theorem~\ref{thm:main1}, we improve upon the previous polynomial mixing time of the RC Glauber dynamics by Galanis, Goldberg and Smolarova~\cite{GGS}, and obtain near-linear mixing~time.

For a graph $G = (V,E)$ and real parameters $q,\beta > 0$, the random cluster (RC) model induces a Gibbs distribution $\pi_{G;q,\beta}$ on the edge subsets of $G$. For $F\subseteq E$, $\pi_{G;q,\beta}(F)$ is proportional to $q^{k(F)}(\emm^\beta-1)^{|F|}$, where $k(F)$ is the number of components in $(V, F)$. It is well-known that for integer $q$, the random cluster model is an alternative representation (also known as the FK-representation) of the Potts model, via the Edwards--Sokal coupling. In particular, we can transform a RC sample $F\sim \pi_{G;q,\beta}$ to a Potts sample by independently assigning to each connected component on $(V,F)$ a uniformly random spin.

The analogue of the Potts monochromatic boundary condition on $T^h$ is conditioning all the leaves to be in the same component (one way to think of this is to introduce an extra vertex with edges to all the leaves and condition all of the additional edges to belong to $F$). We denote the all-wired boundary distribution by $\pi^\pl_{T^h;q,\beta}$ and consider the analogue of Glauber dynamics for the random cluster model (note that this updates a randomly chosen edge conditioned on the status of all the other edges, see Appendix~\ref{sec:defprelims} for details).

We first convert the Potts mixing time bound for the monochromatic boundary condition to an RC mixing time bound for the all-wired boundary condition. We do this via a more general entropy mixing comparison on general graphs using the Edwards--Sokal coupling, see Section~\ref{sec:conversion} and Lemma~\ref{lem:lifting} therein. Analogous entropy mixing bounds from Potts to RC have been shown in \cite{blanca2021entropy} for bipartite graphs and on regular trees \cite{blanca2023swendsen} under monochromatic conditions; Ullrich \cite{Ullrich1,Ullrich2} also obtained analogous transfer results between Potts and RC in terms of spectral gap.
\begin{lemma}\label{lem:conversion}
    Let $\theta>1$ and $\kappa,K>0$ be arbitrary reals.  Then, for all large enough real $d$,  for  all integer $q\geq d^{\theta}$, for all real $\beta > (1+\frac{1.01}{\theta})\frac{\log q}{d}$, the following holds for all large~enough~$n$.

      Let $T$ be a Poisson tree with parameter $d$,  truncated at level~$h=\lfloor K\log_d n\rfloor$. Then, with probability $\geq 1 - \frac{1}{n^\kappa}$ over  $T$, 
      the mixing time for
      the Glauber dynamics for the random cluster distribution $\pi^{\pl}_{T^h;q,\beta}$ with the all-wired boundary condition
      is $\leq |E(T)|n^{o(1)}$. 
      
      In particular, both the spectral gap and log-Sobolev constants are $\geq 1/(|E(T)|n^{o(1)})$.
\end{lemma}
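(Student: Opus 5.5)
The plan is to transfer the Potts bounds established in the proof of Theorem~\ref{thm:main1} to the wired random-cluster measure, using the lifting lemma (Lemma~\ref{lem:lifting}) based on the Edwards--Sokal coupling. First I fix the same parameters $\xi,\epsilon,\delta,\ell,N,W$ as in that proof. I then condition on the same high-probability event $\mathcal E=\mathcal E_0\cap\mathcal E_1$, which has probability $\geq 1-n^{-\kappa}$. On $\mathcal E$, all degrees are at most $W=C_1\frac{\log n}{\log\log n}$ and $|V(T^h)|\le n^M$.

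To encode the boundary, I adjoin an extra vertex $z$ joined to all depth-$h$ leaves and force those edges to be open. Equivalently, I contract the depth-$h$ leaves into a single vertex. Under the Edwards--Sokal coupling, the Potts marginal of $\pi^\pl_{T^h;q,\beta}$ is then $\mu^\pl_{T^h;q,\beta}$, because the component of $z$ receives colour $\pl$ deterministically and every other component receives an independent uniform colour.

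The key input is an approximate tensorisation of entropy for $\mu^\pl$. The proof of Theorem~\ref{thm:main1} gives log-Sobolev constant $\alpha\ge 1/(|V_{\rm in}|n^{o(1)})$. Since $\alpha\le\hat\alpha$, this implies a modified log-Sobolev bound, which is equivalent to approximate tensorisation with constant $C=n^{o(1)}$ for the single-site Potts dynamics.

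Lemma~\ref{lem:lifting} then lifts this Potts entropy factorisation to an edge-wise entropy factorisation for the random-cluster measure on a graph of maximum degree $\Delta$. The lifted constant loses at most a factor $\emm^{O(\beta\Delta)}q^{O(\Delta)}$, or a similar expression depending only on the local degrees. Here $\Delta\le W$, except possibly at the auxiliary vertex $z$. I avoid that exception by applying the lifting to the graph on $\Vin$ with the boundary treated as an external pinned condition, so only the degrees of $\Vin$ enter. Since $\beta W=O(\beta\log n/\log\log n)$, the loss is $n^{o(1)}$, matching the scaling noted in Remark~\ref{remark:running time}. The main obstacle is checking that Lemma~\ref{lem:lifting} applies with a pinned boundary colour, i.e.\ wired rather than free. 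I expect this to work because the Edwards--Sokal joint measure is still a product-type Gibbs measure after pinning.

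From edge-wise approximate tensorisation I obtain $\hat\alpha_{\rm RC}\ge 1/(|E(T^h)|n^{o(1)})$. To pass to the log-Sobolev constant I use Lemma~\ref{lem:mixingtimebounds}. The RC measure is $b$-marginally bounded with $b\ge \min\{p_e,1-p_e\}/q$-type bounds, where $p_e\in\{1-\emm^{-\beta},(1-\emm^{-\beta})/(1-\emm^{-\beta}+\emm^{-\beta}q)\}$. Hence $\log(1/b)=O(\beta+\log q)$, so $K(b)=1/n^{o(1)}$ and $\alpha_{\rm RC}\ge 1/(|E(T)|n^{o(1)})$. Using $\gamma\ge\alpha/2$ gives the same bound for the spectral gap.

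Finally, $\log\log(1/\pi_{\min})=O(\log(|E|\beta+|V|\log q))=n^{o(1)}$ on $\mathcal E_0$. Part (ii) of Lemma~\ref{lem:mixingtimebounds} then yields $T_{\rm mix}\le|E(T)|n^{o(1)}$.
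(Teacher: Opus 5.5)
Your proposal is correct and follows essentially the same route as the paper. The Potts log-Sobolev bound from the proof of Theorem~\ref{thm:main1} gives entropy factorisation with constant $n^{o(1)}$ via $\alpha\le\hat\alpha$. Lemma~\ref{lem:lifting}, in the pinned-boundary form of the remark following it, lifts this to the wired RC measure at a cost of $O(q^3\emm^{3\beta\Delta})=n^{o(1)}$, since $\Delta\le W=O(\log n/\log\log n)$ on the event of Lemma~\ref{lem:degree-sum}; marginal boundedness of $\pi^\pl$ together with Lemma~\ref{lem:mixingtimebounds} then yields the log-Sobolev, spectral-gap and mixing-time bounds, and the wired-versus-free issue you flag is exactly what that remark settles.
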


The following lemma will allow us to utilise Lemma~\ref{lem:conversion} in the context of the sparse random graph $G(n,d/n)$. It captures the well-known property that the neighbourhoods of vertices are tree-like (and contain a small number of cycles). The proof is given in Section~\ref{sec:gndnstruc}.

\newcommand{\statelemPoissonneig}{Let $d>1$ be a real and $n$ be sufficiently large. Let $h=\lfloor\frac{1}{5}\log_d n\rfloor$ and let  $T^h$ be an independent Poisson tree rooted at $v$ with parameter $\hat d=d+\frac{d^2}{n}$ truncated at depth $h$. 

Consider $G\sim G(n,d/n)$ and a fixed vertex $v$ in $G$. Let $T_{v,h}$ denote the BFS discovery tree started at $v$ after $h$ levels. Then, there is a coupling of $T_{v,h}$ and $T^h$ so that, with probability $\geq 1-n^{-7/6}$, $T_{v,h}$ is a subtree of $T^h$ and deleting $\leq 10$ edges from $T^{h}$ leaves $T_{v,h}$ as a connected component.}
\begin{lemma}\label{lem:Poissonneig}
\statelemPoissonneig
\end{lemma}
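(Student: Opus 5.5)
The plan is to build the coupling by running the breadth-first exploration of $G\sim G(n,d/n)$ from $v$ alongside a Poisson Galton--Watson process with parameter $\hat d$. Each BFS offspring count is coupled to lie below its Poisson counterpart. Every surplus Poisson child is then treated as the root of an independent Poisson subtree, and we show that surpluses are so rare that whp there are at most $10$ of them.

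\textbf{Step 1 (one-step coupling).} When the BFS processes an active vertex $u$, let $m$ be the number of vertices still unexplored. The new children of $u$ number $\mathrm{Bin}(m,d/n)$, since each unexplored vertex is adjacent to $u$ independently with probability $p=d/n$. We write $\mathrm{Poi}(\hat d)$ as an independent sum
\[
\sum_{i=1}^m \mathrm{Poi}\bigl(-\log(1-p)\bigr)+\mathrm{Poi}(\lambda_{\rm rest}),\qquad \lambda_{\rm rest}=\hat d+m\log(1-p).
\]
Then we couple each $\mathrm{Ber}(p)$ with the corresponding $\mathrm{Poi}(-\log(1-p))$ so that the Bernoulli never exceeds it. This is possible since both have mass $1-p$ at $0$. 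We also need $\lambda_{\rm rest}\ge 0$. This follows from $-n\log(1-d/n)\le d+d^2/n$ for large $n$, using $-\log(1-x)\le x+x^2$ for small $x$.

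In the coupled process, the actual BFS children of $u$ are identified with a subset of $u$'s Poisson children, and the remaining Poisson children grow independent $\mathrm{Poi}(\hat d)$ subtrees. By construction $T_{v,h}$ is a subtree of $T^h$. Moreover, removing the edge from each surplus child to its parent leaves $T_{v,h}$ as a connected component. Edges of $G$ closing cycles, i.e.\ leading to already-discovered vertices, are simply not part of the BFS tree, so they cause no trouble.

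\textbf{Step 2 (bounding the surplus).} The surplus at $u$ has two parts.

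The first part comes from the $m$ Bernoulli/Poisson pairs. Each pair has excess at least $1$ with probability $\Pr(\mathrm{Poi}(-\log(1-p))\ge 2)=O(d^2/n^2)$, so this part has expected size $O(d^2/n)$.

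The second part is $\mathrm{Poi}(\lambda_{\rm rest})$. Since $-m\log(1-p)\ge dm/n$, we get $\lambda_{\rm rest}\le d(n-m)/n+d^2/n\le (dM+d^2)/n$, where $M$ is the number of explored vertices. Thus, conditionally on the history, the surplus at each processed vertex is stochastically dominated by a Poisson variable of mean $O((dM+d^2)/n)$.

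Summing over at most $M$ processed vertices, the total surplus is dominated by $\mathrm{Poi}(\Lambda)$ with $\Lambda=O(dM^2/n)$. We use $\Pr(\mathrm{Poi}(\Lambda)\ge 11)\le \Lambda^{11}$.

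\textbf{Step 3 (size control), which I expect to be the main obstacle.} We need $M\le |V(T^h)|\le n^{1/5}(\log n)^{2}$, say, except with probability $o(n^{-7/6})$. Markov's inequality is too weak for this. Instead I would use exponential tails for the Poisson Galton--Watson generation sizes: the normalised sizes $Z_k/\hat d^{\,k}$ satisfy $\Pr(Z_k\ge t\hat d^{\,k})\le e^{-ct}$ uniformly in $k$. One way to prove this is to bound the moment generating function of the martingale limit via the recursion $\phi(s)=\exp(\hat d(\phi(s/\hat d)-1))$. Another is a direct Chernoff bound applied level by level. Taking $t=\log^2 n$ and a union bound over the $h$ levels, and using $\hat d^{\,h}\le n^{1/5}(1+o(1))$, gives the required size bound with superpolynomially small failure probability.

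On this event $\Lambda=O(n^{-3/5}\mathrm{polylog}\, n)$, so the probability of $11$ or more surplus edges is at most $n^{-6.6+o(1)}$. Combining the two failure probabilities gives total failure probability well below $n^{-7/6}$.
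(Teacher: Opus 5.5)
\textbf{Overall.} Your coupling is the same as the paper's: the edge indicators are thinned Poissons $P_{i,u}\sim\mathrm{Poi}(-\log(1-p))$, a top-up $\mathrm{Poi}(\hat d-|U_i|\lambda)$ is added, and the surplus is split the same way. Your Step~3 is a more explicit version of the paper's unproved assertion that at most $n^{2/5}$ vertices are explored with probability $1-\emm^{-n^{\Omega(1)}}$, and it is fine.

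\textbf{The gap is in Step~2.} From the fact that the multi-edge excess $\sum_w (P_{u,w}-1)^+$ has \emph{mean} $O(d^2/n)$, you conclude that the whole per-vertex surplus is stochastically dominated by a Poisson of mean $O((dM+d^2)/n)$. A mean bound does not give this, and for the multi-edge part the claim is false. Take $\lambda=-\log(1-p)$ and $s=t/\lambda$. Then $\E[s^{(\mathrm{Poi}(\lambda)-1)^+}]=\emm^{-\lambda}\bigl(1+\lambda+\lambda\frac{\emm^t-1-t}{t}\bigr)$. Over the $m\approx n$ unexplored vertices, the generating function is therefore about $\exp\bigl(d\,\frac{\emm^t-1-t}{t}\bigr)$, while that of $\mathrm{Poi}(\mu)$ is at most $\exp(\mu t/\lambda)$. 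The former grows super-exponentially in $t$, so no $\mathrm{Poi}(Cd^2/n)$ dominates it. Taking $t$ of order $\log n$, the same computation rules out every mean $n^{-\Omega(1)}$. The heavy tail comes from many explored vertices each carrying a double edge.

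Since you only use the tail at level $11$, this is repairable. It cannot, however, be patched with your first-moment estimate alone: Markov gives only $\Pr(P\geq 1)=O(Md^2/n)\approx n^{-4/5}$, which is not below $n^{-7/6}$.

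\textbf{The repair, following the paper.}
\begin{enumerate}[(i)]
\item Keep the Poisson argument only for the top-up part $R=\sum_i R_i$. Conditional on the whole $G$-exploration, the $R_i$ are independent Poissons with means at most $(dM+d^2)/n$. Hence $\Pr(R\geq 10)\le \Lambda^{10}$ with $\Lambda=O(dM^2/n)$.
\item Handle the multi-edge part $P=\sum_i P_i$ by a second-order union bound. Using $\Pr(P_i\geq 1\mid\mathcal F_i)=O(1/n)$ and $\Pr(P_i\geq 2\mid \mathcal F_i)=O(1/n^2)$, you get $\Pr(P\geq 2)\le \sum_i \Pr(P_i\geq 2)+\sum_{i<j}\Pr(P_i\geq 1,P_j\geq 1)=O(M_0^2/n^2)$.
\item Your phrase ``conditionally on the history, with mean depending on $M$'' uses a quantity determined by the future of the exploration. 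Make it precise by stopping the exploration once $M_0=n^{1/5}\mathrm{polylog}(n)$ vertices have been discovered. Your Step~3 shows this stopping occurs only with superpolynomially small probability.
\end{enumerate}
On the event $P\le 1$ and $R\le 9$, there are at most $10$ surplus edges. The failure probability is $n^{-8/5+o(1)}$, comfortably below $n^{-7/6}$.
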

Note that Lemma~\ref{lem:Poissonneig} couples the BFS discovery tree $T_{v,h}$ from a vertex $v$ as a component a Poisson height $h$-tree after $O(1)$ edge deletions. The actual $h$-step neighbourhood of $v$, denoted by $B_{v,h}$, might contain a few cycles so it might be different from $T_{v,h}$, but again only by $O(1)$ edges, see Lemma~\ref{lem:treelike}.  With these pieces, we obtain the following corollary, via standard comparison bounds between \mbox{log-Sobolev constants. We give the proof in Section~\ref{sec:Paulina}.}

\begin{lemma}\label{lem:log-sob-RC-dynamics}
Let $\theta > 1$. Then, for all large enough $d$,  for all integer $q\geq (d+1)^{\theta}$ and real $\beta > (1+\tfrac{1.01}{\theta})\frac{\log q}{d}$, the following holds with probability $1-1/n^{\Omega(1)}$ over the choice of $G\sim G(n,d/n)$. 

Let $h=\frac{1}{5}\log_d n$, $v$ be any vertex of $G$, and $B_{v,h}$ be the $h$-step neighbourhood of $v$ in $G$.  Then, the log-Sobolev constant of the RC Glauber dynamics on $B \coloneqq B_{v,h}$ with the all-wired boundary condition is $\geq 1/(|E(B)| n^{o(1)})$.
\end{lemma}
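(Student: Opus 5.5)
We will transfer the bound of Lemma~\ref{lem:conversion} from the Poisson tree to the ball $B=B_{v,h}$ in three steps: couple the neighbourhood with a Poisson tree, compare Dirichlet forms and entropies under an $O(1)$-edge perturbation, and take a union bound over all $n$ vertices.

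\textbf{Coupling with a Poisson tree.} Fix a vertex $v$. By Lemma~\ref{lem:Poissonneig}, with probability $\geq 1-n^{-7/6}$ the BFS tree $T_{v,h}$ is a connected component of a Poisson tree $T^h$ with parameter $\hat d=d+d^2/n$, after deleting at most $10$ edges. By the tree-likeness statement (Lemma~\ref{lem:treelike}), $B_{v,h}$ differs from $T_{v,h}$ by at most $O(1)$ extra edges, and these create the cycles. We apply Lemma~\ref{lem:conversion} with parameter $\hat d$, height $h=\frac15\log_d n$ (so $K\approx 1/5$), and $\kappa=2$. Note that $q\geq (d+1)^\theta\geq \hat d^\theta$, and the condition on $\beta$ transfers since $\hat d\to d$ up to a $1+o(1)$ factor. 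With the slack $1.01$ we may shrink $\theta$ slightly to absorb this. We conclude that, with probability $\geq 1-n^{-2}$, the all-wired RC Glauber dynamics on $T^h$ has log-Sobolev constant $\geq 1/(|E(T^h)|n^{o(1)})$. The union over $v$ of both failure events has probability $O(n^{-1/6})=n^{-\Omega(1)}$.

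\textbf{Transferring to the component $T_{v,h}$.} This is the main obstacle: we must pass from $T^h$ to $T_{v,h}$ and then to $B$. We use standard perturbation facts for log-Sobolev constants.
\begin{enumerate}[(a)]
\item Conditioning the $\leq 10$ deleted edges to be closed, or modifying $O(1)$ edge weights, changes the Gibbs measure by a bounded density ratio $\emm^{O(\beta)}q^{O(1)}$. By the Holley--Stroock perturbation principle, together with comparison of Dirichlet forms (the one-edge update probabilities change by a comparable factor), the log-Sobolev constant changes by a factor $\emm^{O(\beta)}q^{O(1)}=n^{o(1)}$.
\item If closing the deleted edges disconnects $T^h$ into $T_{v,h}$ and other parts, the measure becomes a product. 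The log-Sobolev constant of a product chain, normalised per edge, is at least the minimum over the factors, so the bound passes to the $T_{v,h}$ factor.
\end{enumerate}
There is one subtlety in (b). The wiring for $T_{v,h}$ must be at the leaves at depth $h$ of $T_{v,h}$. When the deleted edges cut off branches that reached depth $h$, the remaining leaves are still wired together through the ghost vertex, so the boundary condition restricts correctly. We also have $|E(T_{v,h})|\leq |E(T^h)|$, and $|E(T^h)|\leq |E(B)|n^{o(1)}$ as long as the removed pieces have size $n^{o(1)}$ relative to $B$. If this relative-size control fails, we instead invoke Lemma~\ref{lem:conversion} directly on each surviving subtree, since the good event is inherited by descendant subtrees.

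\textbf{Adding the cycle edges.} Finally, $B$ is obtained from $T_{v,h}$ by adding $O(1)$ edges. Starting from the measure on $B$, we condition these edges to be closed, which gives $T_{v,h}$ (up to a bounded tilt of the component count). Again this changes the measure by a factor $q^{O(1)}\emm^{O(\beta)}$. We then compare the Glauber dynamics on $E(B)$ with the product of the $T_{v,h}$ dynamics and independent resampling of the $O(1)$ extra edges, again via Holley--Stroock and Dirichlet-form comparison, losing only $n^{o(1)}$. Since $\beta=O(\log q)$ is constant in $n$, all losses are $n^{o(1)}$. This yields log-Sobolev constant $\geq 1/(|E(B)|n^{o(1)})$ simultaneously for all $v$ with probability $1-n^{-\Omega(1)}$.
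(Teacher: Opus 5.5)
Your overall route is the paper's. You couple each ball with a Poisson tree of parameter $\hat d$ via Lemmas~\ref{lem:Poissonneig} and~\ref{lem:treelike}, apply Lemma~\ref{lem:conversion} with $K=\tfrac15$, $\kappa=2$, take a union bound over $v$, and absorb the $O(1)$ exceptional edges by comparing with a product of the wired tree dynamics and independent free edges, at a cost $(q\emm^\beta)^{O(1)}$. You are right that $T^h$ is not merely $T_{v,h}$ plus $O(1)$ edges: Lemma~\ref{lem:Poissonneig} leaves whole subtrees hanging off the extra children, a point the paper's proof glosses by asserting $B_{v,h}\oplus H_v=T^h_v$. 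You are also right that closing the cut edges makes the wired measure a product in which each piece keeps its own wired depth-$h$ leaves. However, your step (b) and the size discussion after it go wrong.

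\textbf{The product step.} The fact you quote (a product's constant is at least the minimum over the factors) is the tensorisation direction. You need the converse, which with the right normalisation is trivial and removes the size issue entirely. Write $c=1/(\alpha|E|)$ for the constant in $\Ent(f)\le c\sum_e\E[\Var^\circ_e(\sqrt f)]$. A product chain has $c_{\mathrm{prod}}=\max_i c_i$. Testing only functions of the edges of $T_{v,h}$ therefore gives $c_{T_{v,h}}\le c_{\mathrm{prod}}\le (q\emm^\beta)^{O(1)}c_{T^h}=n^{o(1)}$, i.e.\ $\alpha\ge 1/(|E(T_{v,h})|n^{o(1)})$. No comparison of $|E(T^h)|$ and $|E(B)|$ is needed.

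\textbf{Why the fallback fails.} That size comparison really can fail. Take a vertex in a small component of $G$, so $|E(B)|=O(\log n)$, whose coupled tree gets an extra child with a branch surviving to depth $h$; then $|E(T^h)|=n^{\Omega(1)}$, and this is not excluded with probability $1-n^{-\Omega(1)}$. Your fallback does not handle this case, for three reasons.
\begin{itemize}
\item $T_{v,h}$ is a pruning of $T^h$ through the root, not a descendant subtree.
\item Pruning can only remove $h$-good vertices, so it can enlarge the blocks. Hence the good event from the proof of Theorem~\ref{thm:main1} does not transfer.
\item $T_{v,h}$ is not a Poisson tree, so Lemma~\ref{lem:conversion} cannot be applied to it.
\end{itemize}

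\textbf{Step (a).} Conditioning the cut edges to be closed is not a bounded-density change on a common state space, so Holley--Stroock does not apply as stated. Compare instead with the measure in which those $\le 10$ edges are independent free edges. This lives on the same state space, and its densities and single-edge update probabilities are within $(q\emm^\beta)^{O(1)}$ of the original. It is precisely the product you then restrict.

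\textbf{Minor slips.} Since $\hat d\ge d$, the hypothesis on $\beta$ transfers verbatim; shrinking $\theta$ would strengthen it, not weaken it. Also, $\beta$ is not $O(\log q)$, since it has no upper bound. It is, however, fixed independently of $n$, which is all you need for $(q\emm^\beta)^{O(1)}=n^{o(1)}$.
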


\begin{proof}[Proof of Theorem~\ref{thm:main2}]
We first overview the algorithm of \cite{GGS} and then explain how we obtain an improved running time using Lemma~\ref{lem:log-sob-RC-dynamics}.
A first observation is that  it is enough to consider the giant component $C=(V_C,E_C)$ of $G$: whp, all remaining components are size $O(\log n)$ and are either trees or contain at most one cycle~\cite[Section 5]{random-graphs-janson-book}. The Potts/RC distributions tensorise over connected components, and for all non-giant components it is relatively standard to obtain an exact sample in linear time (e.g., using recursions).

The situation on sampling from the giant component $C$ of $G$ is more delicate. Let $\beta_0 < \beta_1$ be defined from $
    \emm^{\beta_0} - 1 = q^{(2-1/10)/d}$ and $\emm^{\beta_1} - 1= q^{(2 + 1/10)/d}$.
It was shown in \cite{GGS} that for all $d$ large enough and $q \geq d^{\Omega(d)}$, with high probability over $G$, the RC distribution on $C$ is a mixture of two distributions (up to an $\emm^{-\Omega(n)}$ error in TV distance), the disordered  $\pi^{\mathrm{dis}}=\pi_C(F=\cdot \mid\eta|E_C|\geq |F|)$ and the ordered $\pi^{\mathrm{ord}}=\pi_C(F=\cdot \mid (1-\eta) |E|\leq  |F|)$, where $\eta=1/1000$. Moreover, a typical configuration is disordered for all $\beta\leq\beta_0$, and ordered for all $\beta\geq \beta_1$.
For $\beta \in (\beta_0,\beta_1)$ and any $\delta > 0$, the mixture of $\pi^\dis$ and $\pi^\ord$ can be approximated using the algorithm from \cite[Theorem 2]{GGS} within any polynomial $n^{-\delta}$ factor (provided that $d$ and $q$ are large enough in terms of $\delta$) in $O(n^{1+\delta})$ time. 

The samples from $\pi^\ord$ and $\pi^\dis$ can be obtained in polynomial time using RC dynamics with suitable initialisations. In particular, the following was shown in~\cite{GGS}:
\begin{enumerate}[(i)]
\item the RC dynamics initialised at $F=\emptyset$ converges to $\pi^{\mathrm{dis}}$ \mbox{in $O(n^{1+o(1)}\log \frac{1}{\epsilon})$ steps for~$\beta<\beta_1$,}
\item the RC dynamics initialised at $F=E$ converges to $\pi^{\mathrm{ord}}$ in $\mathrm{poly}(n)\log \frac{1}{\epsilon}$ steps for~$\beta>\beta_0$,
\end{enumerate}
As an immediate corollary, we can obtain an $\epsilon$-sample from $\pi_{C}$ for all $\beta \leq \beta_0$ in $n^{1+o(1)}$ steps for all $\epsilon \geq 1/\poly(n)$.
Furthermore, for all $\beta \in (\beta_0,\beta_1)$ we can obtain an $\epsilon$-sample from $\pi^\dis$ in the same running time.

Hence, the main bottleneck for $\beta > \beta_0$ is the mixing time result for the ordered phase. The argument to obtain a polynomial mixing time in~\cite{GGS} uses two main ingredients: a property of the distribution $\pi^{\mathrm{ord}}$ called weak spatial mixing within the phase that occurs in the neighbourhood $B_{v,h}$ for each $v\in V$, and the mixing time under the all-wired boundary condition in $B_{v,h}$.
In particular (see~\cite[Theorem 26]{GGS}), subject to the weak spatial mixing within the ordered phase (which holds for all $d$ large enough and $q \geq d^{\Omega(d)}$ and $\beta >\beta_0$) the time the RC dynamics initialised from $F = E$ needs to get within $\epsilon\geq\emm^{-\Omega(n)}$ TV-distance from $\pi^\ord$
 is at most $T = 30|E| \times \max_{v\in V}  \frac{T_v}{|E(B_{v,h})|}\times O(\log \tfrac 1\epsilon)$, where
$T_v$ is the number of steps needed for the RC dynamics on $B_{v,h}$ with the all-wired boundary condition to get within $\tfrac{1}{n^{3}}$ TV-distance from $\pi^\pl_{B_{v,h};q,\beta}$.

By Lemma~\ref{lem:log-sob-RC-dynamics} with $\theta = \tfrac{101}{90}$, we get that for all $d$ large enough, all integer $q\geq (d+1)^{101/90}$ and all real $\beta \geq \beta_0 > (2 - \tfrac{1}{10})\tfrac{\log q}{d}$, whp, for all vertices $v$, the corresponding wired RC dynamics has the log-Sobolev constant $1/|E(B_{v,h})|n^{o(1)}$. By the mixing time bounds in Lemma~\ref{lem:mixingtimebounds}, we obtain that  for all $v$ it holds that $T_v\leq |E(B_{v,h})|n^{o(1)}\log n$. In combination with the fact that whp $|E| = O(n)$ and $\log\tfrac 1\epsilon = O(\log n)$, we get that $\max_{v\in V}  \frac{T_v}{|E(B_{v,h})|} = n^{o(1)}$.

Therefore, for any $\epsilon\geq 1/n^{\kappa}$, 
$T = n^{1+o(1)}$ steps suffice to reach at most $\epsilon$ TV-distance. To conclude, fix $\delta > 0$. If $\beta \in(\beta_0,\beta_1)$, we obtain the $(1\pm n^{-1/\delta})$-approximations of the proportions of the phases, $\hat p_\ord$ and $\hat p_\dis$ in $O(n^{1+\delta})$ time. Otherwise we use $\hat p_\ord = 0, \hat p_\dis = 1$ for $\beta \leq \beta_0$ and $\hat p_\ord = 1, \hat p_\dis = 0$ for $\beta\geq\beta_1$. Then with probability $\hat p_\ord$ we return a $n^{-1/\delta}$-sample from $\pi^\ord$, and with probability $\hat p_\dis$, we return a sample from $\pi^\dis$. In both cases, we use $O(n^{1+o(1)})$ steps of the RC dynamics. Each step can be simulated in time $O(n^{o(1)})$, thus the resulting running time is $O(n^{1+o(1)} + n^{1+\delta}) = O(n^{1+\delta})$ for all $n$ large enough. Moreover, the resulting sample has TV distance   $\leq n^{-\Omega(1/\delta)} + \emm^{-\Omega(n)} +  n^{-1/\delta}$ from $\pi_C$. Finally we convert the RC sample to the Potts sample via the Edwards--Sokal coupling.

We remark that while Theorem~\ref{thm:main1} and Lemma~\ref{lem:log-sob-RC-dynamics} apply for $q \geq \mathrm{poly}(d)$, Theorem~\ref{thm:main2} requires $q = d^{\Omega(d)}$. This lower bound on $q$ is inherited from the polymer techniques  in~\cite{GGS} used to control the phases and to establish the weak spatial mixing condition.
\end{proof}

\section{Reconstruction, Spectral Gap and Log-Sobolev}\label{sec:block-dstar}\label{sec:spectralgap}\label{sec:funpreliminaries}
In this section, we give the proofs of Theorem~\ref{thm:block-dstar} and Lemma~\ref{lem:conversion}, which are the key ingredients for our mixing-time bounds. We first recall some functional analysis preliminaries. 

Let $V$ be a set and $\nu$ a distribution on $[q]^V$;  let $\Omega=\Omega_\nu$ be the support of $\nu$. We write $\sigma \sim \nu$ to denote that~$\sigma$ is drawn from the distribution~$\nu$.
Given any function $f \colon \Omega \to \mathbb{R}$, 
we use   $\E_{V}(f)$ to denote the expectation of~$f(\sigma)$
when $\sigma \sim \nu$. Similarly, we use
$\Var_{V}(f)$ and $\Ent_{V}(f)$ to denote the variance of~$f(\sigma)$  
when $\sigma \sim \nu$. In particular, we have that $\Ent_{V}(f)=\E_V[f\log f]-\E_V[f]\log \E_V[f]$, which is well-defined only when $f\geq 0$; below we assume this tacitly when introducing notation involving entropy. We note that later, in Section~\ref{sec:conversion}, we have multiple distributions in play, so we will use the more explicit $\Var_\nu(f)$ and $\Ent_{\nu}(f)$ in order to make clear the underlying distribution explicit. 

Recall that for a configuration $\tau\in \Omega$ and a set $A\subseteq V$, we denote by $\nu^\tau_A$ the distribution on $\Omega$ obtained from $\nu$ by conditioning the vertices in $V\backslash A$ to take the spins prescribed by $\tau$, i.e. $\nu^\tau_A(\cdot)=\nu(\cdot \mid \sigma_{V\backslash A}=\tau_{V\backslash A})$. Given a function   $f \colon \Omega\to \mathbb{R}$,
a vertex set $A\subseteq  V$, and a 
configuration $\tau \in \Omega$, 
we use $\E_A^{\tau} f$, $\Var_A^{\tau} f$, and $\Ent_A^{\tau} f$  to denote the expectation, variance, and entropy of~$f(\sigma)$
when $\sigma \sim \nu_A^{\tau}$.
We use $\E_A^{\circ} f$ to denote the function that maps~$\tau$ to $\E_A^{\tau} f$ and, similarly, 
$\Var_A^{\circ} f$ to denote the function that maps~$\tau$
to $\Var_A^{\tau} f$ (and analogously for $\Ent_A^{\circ} f$).
So, for example, if $A\subseteq B \subseteq C\subseteq  V$,
the expression 
$\E^{\tau}_C \Var_B^{\circ} \E_A^{\circ} f$
denotes
$\E_{\sigma_1\sim \nu_C^{\tau}} 
\Var_{\sigma_2 \sim \nu_B^{\sigma_1}} 
\E_{\sigma_3 \sim \nu_A^{\sigma_2}} f(\sigma_3)$. From the definition of $\E_A^{\tau}$ and the tower property of conditional expectation, we have 
for any $B\subseteq A$ that 
$\E^\tau_A \E_B^{\circ}  f = \E^\tau_A f$. We will also use the following property of variance from~\cite[Section 7.1]{MSW03}.
For any
$f \colon \Omega \to \mathbb{R}$,
$B\subseteq A\subseteq  V$, and  
$\tau \in \Omega$, the law of total variance gives that
\begin{equation} \label{eq:var_total_law}  
\Var^\tau_A f = \E^\tau_A \Var_B^{\circ} f + \Var^\tau_A \E_B^{\circ} f.
\end{equation}

We are now ready to give more formal formulations for the spectral gap, log-Sobolev and modified Log-Sobolev constants that will be relevant for this section. In particular, the spectral gap of Glauber dynamics is the largest value of $\gamma$ such that for all $f:\Omega\rightarrow \mathbb{R}$
\[\Var_V (f)\leq \frac{1}{\gamma|V|} \sum_{v \in V}\ \E_{V} \left[\Var^\circ_v (f)\right].\]
This inequality is also known as approximate factorisation of variance with constant $C=\tfrac{1}{\gamma |V|}$. 
For the log-Sobolev and modified log-Sobolev constants $\alpha$ and $\hat \alpha$, we instead have for all $f
$
\[\Ent_V (f)\leq \frac{1}{\alpha|V|} \sum_{v \in V}\ \E_{V} \big[\Var^\circ_v (\sqrt{f})\big]\quad \mbox{and}\quad\Ent_V (f)\leq \frac{1}{\hat\alpha|V|} \sum_{v \in V}\ \E_{V} \left[\Ent^\circ_v (f)\right].\]
The latter inequality is known as approximate factorisation of entropy with constant $C=\tfrac{1}{\hat \alpha |V|}$.

We note that for a product-distribution $\nu$ on $[q]^V$ factorization of entropy and variance hold with constant $C=1$. More generally, suppose $\nu$ is the product distribution $\nu=\nu_1\otimes \cdots \otimes \nu_k$ where each of the $\nu_i$'s is a distribution supported over $\Omega_i\subseteq[q]^{V_i}$, and $V_1,\hdots, V_k$ are a partition of $V$.  Then, for any function $f$ on $\Omega$,  it holds that 
\[\Var_{\nu}(f)\leq \sum_{i\in [k]}\E_\nu[\Var^{\circ}_{V_i}(f)]\mbox{ and }\Ent_{\nu}(f)\leq \sum_{i\in [k]}\E_\nu[\Ent^{\circ}_{V_i}(f)].\]

\subsection{Variance Factorisation from Block Non-Reconstruction: Theorem~\ref{thm:block-dstar}}
In this section, we prove Theorem~\ref{thm:block-dstar}. Throughout, we will work on a tree $T$ rooted at a vertex~$r$; we will identify the tree with its vertex set when there is no danger of confusion. Let~$\nu$ be a Gibbs distribution on~$[q]^T$. We will need the following two pieces for proving Theorem~\ref{thm:block-dstar}. The first one follows from a standard variance decomposition, whereas the second is more specific to our setting and utilises the block decomposition and reconstruction more crucially.
Throughout, for a graph~$G$ rooted at a vertex~$v$, we denote by $G'$ the graph obtained from $G$ by deleting $v$. We begin with the variance decomposition,  proved in Section~\ref{sec:decomp}.
\newcommand{\statecordecomp}{Let $T$ be a rooted tree and let $S=T_v$ be the subtree of $T$ rooted at $v$ for some vertex $v\in V(T)$. Then, for any configuration $\eta\in \Omega$ and $g:\Omega\rightarrow \mathbb{R}$,
    \[\Var^{\eta}_{S}(g)\leq \sum_{w\in S} \E^{\eta}_{S}\Big[\Var^\circ_{T_w}\big(\E^\circ_{T_w'}(g)\big)\Big].\]}
    \begin{lemma}\label{cor:decomp}
    \statecordecomp
    \end{lemma}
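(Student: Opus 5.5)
We prove the statement by induction on the height of the subtree $S=T_v$, using only the law of total variance \eqref{eq:var_total_law} and the Gibbs/tree structure. Note that the term for $w=v$ is $\E^\eta_S[\Var^\circ_{T_v}(\E^\circ_{T_v'}(g))]=\Var^\eta_S(\E^\circ_{T_v'} g)$, because $\E^\eta_S$ of a function measurable with respect to the outside of $S$ (which is fixed to $\eta$) is just evaluation at $\eta$. So the plan is to peel off the root:
\[
\Var^\eta_S(g)=\Var^\eta_S\big(\E^\circ_{T_v'}g\big)+\E^\eta_S\big[\Var^\circ_{T_v'}(g)\big],
\]
which is \eqref{eq:var_total_law} with $B=T_v'\subseteq A=S$. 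The first term is exactly the $w=v$ summand.

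For the second term, let $u_1,\dots,u_k$ be the children of $v$, so that $T_v'=\bigsqcup_i T_{u_i}$. Conditional on $\sigma_{V\setminus T_v'}$ (in particular on $\sigma_v$), the Gibbs property (footnote~\ref{fn:Gibbs}) shows that $\nu^{\tau}_{T_v'}$ is a product measure $\bigotimes_i \nu^{\tau}_{T_{u_i}}$. This holds because the external boundary of each $T_{u_i}$ is $\{v\}$, and there are no edges between different $T_{u_i}$'s, so the conditional law factorises.

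We then apply the product-measure variance factorisation recorded before this subsection:
\[
\Var^\tau_{T_v'}(g)\le \sum_i \E^\tau_{T_v'}\big[\Var^\circ_{T_{u_i}}(g)\big].
\]
Taking $\E^\eta_S$ and using the tower property $\E^\eta_S\E^\circ_{T_v'}=\E^\eta_S$ gives
\[
\E^\eta_S\big[\Var^\circ_{T_v'}g\big]\le \sum_i \E^\eta_S\big[\Var^\circ_{T_{u_i}}(g)\big]=\sum_i\E^\eta_S\Big[\E^{\circ}_{T_{u_i}}\text{-averaged }\Var^\circ_{T_{u_i}}(g)\Big].
\]
Now apply the induction hypothesis to each $T_{u_i}$ with boundary configuration $\tau$:
\[
\Var^\tau_{T_{u_i}}(g)\le\sum_{w\in T_{u_i}}\E^\tau_{T_{u_i}}\big[\Var^\circ_{T_w}(\E^\circ_{T_w'}g)\big].
\]
Integrating over $\tau\sim\nu^\eta_S$ and using the tower property again ($T_{u_i}\subseteq S$, so $\E^\eta_S\E^\circ_{T_{u_i}}=\E^\eta_S$) yields the summands for all $w\in T_{u_i}$. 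Summing over $i$ and adding the $w=v$ term completes the induction.

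The base case is a leaf $v$. There $T_v'=\emptyset$, so $\E^\circ_{T_v'}g=g$ and the claim reads $\Var^\eta_{\{v\}}g\le \Var^\eta_{\{v\}}g$, which is trivial.

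The only step needing care is the product-measure claim: checking that conditioning on everything outside $T_v'$ makes the subtrees $T_{u_i}$ independent, and that the support of the conditional is the product of the supports, so that factorisation applies on $\Omega$ restricted appropriately. This follows from the nearest-neighbour Gibbs assumption and the tree structure, since $v$ separates the subtrees $T_{u_i}$.
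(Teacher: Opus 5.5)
Your proof is correct, but it is organised differently from the paper's. The paper does not induct on subtrees. It applies the multi-step decomposition of Lemma~\ref{lem:var_decomposition} to the bottom-up level filtration $\emptyset=S_0\subset S_1\subset\dots\subset S_{m+1}=S$, where each $S_j\setminus S_{j-1}$ is an entire level of $S$. It then tensorises $\Var^\tau_{S_j}$ over the subtrees $T_w$ with $w\in S_j\setminus S_{j-1}$. This leaves terms $\Var^\circ_{T_w}(\E^\circ_{S_{j-1}}g)$ in which $\E^\circ_{S_{j-1}}$ also averages over the subtrees below the other vertices of that level. To reduce these to $\Var^\circ_{T_w}(\E^\circ_{T'_w}g)$, the paper invokes the convexity inequality \eqref{eq:var_convexity}, using that $S_{j-1}\setminus T_w$ is at distance at least $2$ from $T_w$.

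You instead split off only the root via the two-set identity \eqref{eq:var_total_law}, tensorise over the children's subtrees right away, and recurse on $g$ itself under a new boundary condition. Because each subtree receives $g$ rather than a partially averaged function, the conditional expectation $\E^\circ_{T'_w}$ appears exactly when $w$ is peeled, so \eqref{eq:var_convexity} is never needed. You also avoid the level bookkeeping, i.e.\ checking that $S_{j-1}\cap T_w=T'_w$ in an irregular tree. Your argument is therefore more elementary and self-contained. The paper's version follows the filtration-based template of Lemma~\ref{lem:var_decomposition} and reuses the convexity tool it needs anyway in the proof of Lemma~\ref{lem:nonrecoalt}.

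One cosmetic point: the phrase ``$\E^{\circ}_{T_{u_i}}$-averaged'' in your display is superfluous. The bound $\E^\eta_S[\Var^\circ_{T'_v}g]\le\sum_i\E^\eta_S[\Var^\circ_{T_{u_i}}(g)]$ is all you use.
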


To control the terms on the right-hand side, we use the following bound that utilises the reconstruction property. The proof is given in Section~\ref{sec:nonrecoalt}. 
\newcommand{\statenonrecoalt}{    Suppose that $\rho\in (0,\tfrac{1}{20})$ and that $\mathcal{B}=\{B_v\}_{v\in T}$ is a rooted block decomposition of~$T$ that satisfies non-reconstruction with block decay~$\rho$ under the distribution~$\nu$. 

Then,  for every $v\in T$, $\eta\in \Omega$ and $f:\Omega\rightarrow \mathbb{R}$ it holds that
    \begin{equation*}
        \Var^{\eta}_{T_v}[\E^{\circ}_{T_v'}(f)]\leq     4\E^{\eta}_{T_v}\big[\Var^{\circ}_{B_v}(f)\big]+5\rho \sum_{w\in B_{v}'\cup \partial B_v } \E^{\eta}_{T_v}\big[\Var^\circ_{T_w}[\E^\circ_{T_w'}(f)]\big].
    \end{equation*}
    }
\begin{lemma}\label{lem:nonrecoalt}
\statenonrecoalt
\end{lemma}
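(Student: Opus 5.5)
The plan is to use the fact that $g\coloneqq\E^{\circ}_{T_v'}(f)$, restricted to configurations agreeing with $\eta$ outside $T_v$, depends only on $\sigma_v$: we have $g(i)=\E^\eta_{T_v}[f\mid \sigma_v=i]$. The variance is then taken over the single spin $\sigma_v$ under $\nu^\eta_{T_v}$. I will insert the intermediate function
\[
\psi\coloneqq\E^\circ_{U}(f),\qquad U\coloneqq T_v\setminus(B_v\cup\partial B_v),
\]
which depends only on $\sigma_{B_v\cup\partial B_v}$. By the Gibbs property, the subtrees hanging below $\partial B_v$ see only their roots. Next I set $k\coloneqq \E^\circ_{B_v}(\psi)$. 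Inside $T_v$ the external boundary of $B_v$ is contained in $\partial B_v$, together with the parent of $v$, which is fixed by $\eta$; so $k$ is a function of $\sigma_{\partial B_v}$ only. I then write $g(i)=\E[\psi-k\mid\sigma_v=i]+\E[k\mid\sigma_v=i]$ and use $\Var(X+Y)\le (1+a)\Var X+(1+a^{-1})\Var Y$, with $a=1$ as a first try and tuned later.

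\textbf{Local term.} By conditional Jensen, $\Var_{\sigma_v}\E[\psi-k\mid\sigma_v]\le \E^\eta_{T_v}[(\psi-k)^2]=\E^\eta_{T_v}\Var^\circ_{B_v}(\psi)$. Since $\psi$ is obtained from $f$ by averaging over $U$, which is disjoint from $B_v$, convexity of variance under mixtures gives $\E\Var^\circ_{B_v}\psi\le\E\Var^\circ_{B_v}f$. With the constant from the split, this produces the $4\,\E^\eta_{T_v}[\Var^\circ_{B_v}(f)]$ term.

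\textbf{Non-reconstruction term.} For $\E[k\mid\sigma_v=i]$ I use \eqref{eq:nonreco5454}. Couple $\nu_{\partial B_v}(\cdot\mid\sigma_v=i)$ and $\nu_{\partial B_v}(\cdot\mid\sigma_v=j)$ optimally, and apply Cauchy--Schwarz on the disagreement event, which has probability at most $\rho$. This should give
\[
\Var_{\sigma_v}\E[k\mid\sigma_v]\le c\rho\,\E^\eta_{T_v}\big[(k-\E^\eta_{T_v} k)^2\big]
\]
for a small absolute constant $c$. (A MSW-style covariance bound on the $q$-valued variable $\sigma_v$ should suffice; I will verify it with $c\le 2$.) It then remains to bound $\Var^\eta_{T_v}(k)$. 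Here $k$ is a function on the subtree $B_v\cup\partial B_v$ rooted at $v$. I will run the decomposition of Lemma~\ref{cor:decomp} along this subtree, applied to $k$ viewed as a function of $\sigma_{T_v}$. This gives $\Var^\eta_{T_v}k\le\sum_{w\in B_v\cup\partial B_v}\E\,\Var^\circ_{T_w}\E^\circ_{T_w'}(k)$.

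The main obstacle is the final comparison. I need to pass from $k$ back to $f$ in the terms $\Var_{T_w}\E_{T_w'}(k)$, i.e. to show $\E\Var^\circ_{T_w}\E^\circ_{T_w'}k\le \E\Var^\circ_{T_w}\E^\circ_{T_w'}f$ plus possibly a multiple of $\E\Var^\circ_{B_v}f$. For $w\in\partial B_v$, $k$ is a conditional expectation of $f$ over sets disjoint from $T_w'$, which should commute correctly with $\E_{T_w'}$. For $w\in B_v'$, the averaging over $B_v$ interferes, and I would absorb the discrepancy into the local term, using $\Var(k)\le 2\Var(\psi)+2\E\Var_{B_v}\psi$.

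The $w=v$ term is circular, since it is the left-hand side itself. I handle it by moving it to the left: this costs a factor $(1-O(\rho))^{-1}$, which is harmless because $\rho<1/20$. I then fix the split parameter $a$ so that the constants come out as $4$ and $5\rho$.
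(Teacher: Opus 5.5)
Your local term and your coupling bound $\Var_{\sigma_v}\E[k\mid\sigma_v]\le 2\rho\Var^\eta_{T_v}(k)$ are fine. They are exactly the paper's Lemma~\ref{lem:f3f34f4} applied to $\psi=\E^\circ_U f$, with $k=\E^\circ_{B_v}\psi$ playing the role of $g_1$.

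\textbf{The gap is in how you bound $\Var^\eta_{T_v}(k)$.} You decompose $k$ itself via Lemma~\ref{cor:decomp} and then want to compare $\E\Var^\circ_{T_w}\E^\circ_{T'_w}k$ with $\E\Var^\circ_{T_w}\E^\circ_{T'_w}f$ term by term. That comparison already fails for $w\in\partial B_v$, not only for $w\in B_v'$. The reason is that $k$ contains the average $\E^\circ_{B_v}$, and $B_v$ contains $\mathrm{parent}(w)$. So $B_v$ sits at distance $1$ from $T_w$, and \eqref{eq:var_convexity} is unavailable. For instance, take $f=\phi(\sigma_{\mathrm{parent}(w)})$. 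Then $\E\Var^\circ_{T_w}\E^\circ_{T'_w}f=0$, while $k$ depends on $\sigma_w$ through the conditional law of $\mathrm{parent}(w)$ given the boundary.

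Absorbing such discrepancies into the local term one $w$ at a time does not rescue the constants. For example, $\E\Var^\circ_{T_w}\E^\circ_{T'_w}(k-\psi)\le \E(\psi-k)^2=\E\Var^\circ_{B_v}\psi$ charges one copy of $\E\Var^\circ_{B_v}f$ per vertex of $B_v'\cup\partial B_v$. The local coefficient then becomes $2+O(\rho|B_v\cup\partial B_v|)$, and nothing in the hypotheses controls $\rho|B_v|$, so the absolute constant $4$ is lost.

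Relatedly, in the decomposition of $k$ the $w=v$ term is $\Var_{\sigma_v}\E[k\mid\sigma_v]$, which is the non-reconstruction quantity, not the left-hand side. The circularity you describe belongs to $\psi$, not to $k$.

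\textbf{The missing idea is that $k$ never needs to be decomposed.}
\begin{enumerate}[(i)]
\item Since $k=\E^\circ_{B_v}\psi$, the law of total variance gives $\Var^\eta_{T_v}(k)\le\Var^\eta_{T_v}(\psi)=\Var^\eta_{T_v}(\E^\circ_{T'_v}\psi)+\E^\eta_{T_v}\Var^\circ_{T'_v}\psi$.
\item Because $\E^\circ_{T'_v}\psi=\E^\circ_{T'_v}f$, the first summand is exactly the left-hand side.
\item Apply Lemma~\ref{cor:decomp} to $\psi$ on $T'_v$. The terms with $w\in U$ vanish. For $w\in B_v'\cup\partial B_v$ one has $\E^\circ_{T'_w}\psi=\E^\circ_{Q_w}\E^\circ_{T'_w}f$, where $Q_w=U\setminus T_w$ is at distance $\ge 2$ from $T_w$. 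So \eqref{eq:var_convexity} gives the comparison with $f$ with no loss.
\item With $a=1$ and $c=2$ this yields $(1-4\rho)\,\mathrm{LHS}\le 2\E^\eta_{T_v}\Var^\circ_{B_v}f+4\rho\sum_{w\in B_v'\cup\partial B_v}\E^\eta_{T_v}\Var^\circ_{T_w}\E^\circ_{T'_w}f$. This gives the constants $\tfrac{2}{1-4\rho}\le 4$ and $\tfrac{4\rho}{1-4\rho}\le 5\rho$.
\end{enumerate}
You need the sharp inequality $\Var(k)\le\Var(\psi)$ here. With your weaker $\Var(k)\le 2\Var(\psi)+2\E\Var^\circ_{B_v}\psi$, reaching $5\rho$ forces $a>4$, and then the local constant exceeds $4$.
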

The intuition is that $B_v$ acts as a screen between $v$ and the rest of
the subtree. Any dependence of $\E^\circ_{T'_v}f$ on the spin at $v$ must
either be caused by fluctuations of $f$ inside $B_v$, which gives the
term $\E^\eta_{T_v}[\Var^\circ_{B_v}(f)]$, or must pass through the boundary
$\partial B_v$. The non-reconstruction decay $\rho$ gives that the latter channel
has strength at most $\rho$. The residual variance is then charged
recursively to the subtrees rooted at vertices in $B'_v\cup \partial B_v$.
The terms with roots in $B'_v$ arise after averaging over the region beyond the block, since the resulting
function may still depend on the non-root vertices of $B_v$; they are controlled by the same cover-radius accounting as the $\partial B_v$ boundary terms. With these two ingredients, we now proceed to complete the proof of Theorem~\ref{thm:block-dstar} following the template introduced in \cite{MSW03}.
\begin{proof}[Proof of Theorem~\ref{thm:block-dstar}]
From Section~\ref{sec:spectralgap}, to bound the spectral gap by $(8R^*C^*|V|)^{-1}$ it suffices to establish that, for an arbitrary function $f \colon \Omega \to \mathbb{R}$, it holds that
\begin{equation}\label{eq:goal2424}
        \Var_{T}(f)
        \leq
        8 R^*
        C^*
        \sum_{u\in T}
        \E_T\left[\Var^{\circ}_{u}(f)\right].
\end{equation}
By Lemma~\ref{cor:decomp} applied to $S=T$,  
\begin{equation}\label{eq:Pfbound}
\Var_{T}(f)\leq \sum_{v\in T} \E_T
        \left[
          \Var^{\circ}_{T_v}
          \left(\E^{\circ}_{T'_v}f\right)
        \right] \eqqcolon P(f).
\end{equation}
Taking expectation over $\eta\sim\nu$ in the inequality of Lemma~\ref{lem:nonrecoalt}  and summing over $v\in T$ gives
\begin{equation}\label{eq:intermedfwer33232}
        P(f)
        \le
        4
        \sum_{v\in T}
        \E_T
    \left[\Var^{\circ}_{B_v}(f)\right] +
        5\rho
        \sum_{v\in T}
        \sum_{w\in B_{v}'\cup \partial B_{v}}
        \E_T
        \left[
          \Var^{\circ}_{T_w}
         \left(\E^{\circ}_{T'_w}f\right)
        \right].
\end{equation}
Since the cover radius of the block decomposition $\mathcal{B}$ is $ R^*$, the double sum in the second term is at most $ R^* P(f)$.
For the first term, local mixing on $B_v$ with factor $C^*$ gives that for any $\eta\in \Omega$ it holds that $\Var^{\eta}_{B_v}(f)\leq C^*\sum_{u\in B_v}
        \E^{\eta}_{B_v}
        \big[\Var^{\circ}_{u}(f)\big]$, so by taking expectation over $\eta\sim\nu$ we obtain that
$\E_T\left[\Var^{\circ}_{B_v}(f)\right]
        \le
        C^*
        \sum_{u\in B_v}
        \E_T
        \big[\Var^{\circ}_{u}(f)\big]$.
        
Summing over $v\in T$ and using the bound $R^*$ on the cover radius,
\[
        \sum_{v\in T}
        \E_T
        \left[\Var^{\circ}_{B_v}(f)\right]
        \le
        C^*\sum_{v\in T}\sum_{u\in B_v}
        \E_T
        \big[\Var^{\circ}_{u}(f)\big]
        \leq C^*R^* D(f),
\]
where $D(f)\coloneqq
        \sum_{u\in T}
        \E_T
        \left[\Var^{\circ}_{u}(f)\right].$
Combining these upper bounds, \eqref{eq:intermedfwer33232} yields $P(f)
        \le
        4 C^*R^* D(f)+5\rho R^* P(f)$.
Since by assumption $5\rho R^*<1/2$, we obtain
$P(f)
        \leq
        8
        R^*C^*D(f)$.
Combining this with \eqref{eq:Pfbound} establishes \eqref{eq:goal2424}, hence proving the theorem.
\end{proof} 

\subsection{Proof of Lemma~\ref{lem:conversion}}\label{sec:conversion}
Inspired by the approaches in  \cite{blanca2021entropy,blanca2023swendsen} on $\mathbb{Z}^d$ and the regular tree respectively, the key step in the proof of Lemma~\ref{lem:conversion} is to lift the entropy factorisation from the spin world (Potts) to an entropy factorisation in a joint spin-edge world, and then project to the edge world (RC).  Before stating the relevant lemma, we first recall the  Edwards--Sokal coupling between the Potts and random
cluster distributions that will be crucial in the argument.

Let $G=(V,E)$ be a  graph. For a spin
configuration $\sigma\in[q]^V$, let
$M(\sigma)$ be the set of monochromatic edges under $\sigma$. The Edwards--Sokal joint measure on
spin-edge pairs $(\sigma,A)\in [q]^V\times \{0,1\}^E$ is given by $\nu_{G;q,\beta}(\sigma,A)
        \propto (\emm^\beta-1)^{|A^{\mathrm{in}}|}
        \mathbf 1\{A^{\mathrm{in}}\subseteq M(\sigma)\},$
where $A^{\mathrm{in}},A^{\mathrm{out}}$ are the sets of edges that are assigned 1 and 0 under $A$, respectively.
Equivalently, conditioned on $\sigma$, the edges are independent: an edge
$e=\{u,v\}$ is forced to be out if $\sigma_u\ne\sigma_v$, while if
$\sigma_u=\sigma_v$ it is in  with probability $\frac{\emm^\beta-1}{\emm^\beta}=1-\emm^{-\beta}$. 
Conversely, conditioned on $A$, the spins are constant on each connected
component of $(V,A^{\mathrm{in}})$, and different connected components receive independent and uniformly-distributed colours from~$[q]$. It is a well-known fact that the marginal distribution on $\sigma$ is the Potts distribution $\mu_{G;q,\beta}$, whereas the marginal distribution on $A$ is the RC distribution $\pi_{G;q,\beta}$.

We then show the following lemma.  A notation point: we use the same notation $\Ent_S^\circ$ for conditional entropy on the joint space $V\sqcup E$: thus, if $S\subseteq V\sqcup E$, then $\Ent_S^\circ$ denotes entropy obtained by resampling the coordinates in $S$ while fixing all coordinates outside $S$.
\begin{lemma}\label{lem:lifting}
Fix integer $q\geq 2$ and $\beta>0$. Let $G=(V,E)$ be a graph and 
$\nu=\nu_{G;q,\beta}$ be the Edwards--Sokal distribution.  Let $\mu=\mu_{G;q,\beta}$ and $\pi=\pi_{G;q,\beta}$ be the Potts and RC distributions. 

Suppose that $\mu$ satisfies
entropy factorisation with constant $C_{\mathrm{spin}}$. 
Then, with $C_{\mathrm{joint}} \coloneqq O(q^3  \emm^{3\beta \Delta}C_{\mathrm{spin}})$, for every function $g\ge0$ on the joint spin-edge space $[q]^V\times \{0,1\}^E$,
\begin{equation}\label{eq:jointfac}
\Ent_\nu(g) \leq C_{\mathrm{joint}}
        \bigg(
        \sum_{v\in V}
        \E_\nu\left[\Ent^{\circ}_v(g)\right]
        + \sum_{e\in E}
        \E_\nu\left[\Ent^{\circ}_e(g)\right]
        \bigg).
\end{equation}
As a corollary, $\pi$ satisfies
entropy factorisation with constant $C_{\mathrm{joint}}$. 
\end{lemma}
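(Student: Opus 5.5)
We decompose the joint entropy along the spin/edge structure of the Edwards--Sokal measure. By the chain rule of entropy,
\[
\Ent_\nu(g)=\E_\nu\big[\Ent^\circ_E(g)\big]+\Ent_\nu\big(\E^\circ_E g\big),
\]
where $\E^\circ_E g$ is a function of $\sigma$ alone, and its law under $\nu$ is $\mu$. We bound the two terms separately.

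For the first term, note that given $\sigma$ the edges are independent under $\nu$: edges in $M(\sigma)$ are Bernoulli$(1-\emm^{-\beta})$ and the rest are forced out. Hence $\nu(\cdot\mid\sigma)$ is a product measure, and product factorisation gives
\[
\Ent^\sigma_E(g)\le\sum_e\Ent^\sigma_e(g),
\]
where $\Ent^\sigma_e$ resamples $e$ with $\sigma$ and the other edges fixed. This coincides with the single-edge conditional in the joint space, since in the joint measure the conditional of $e$ given everything else depends only on $\sigma_u,\sigma_v$. So the first term is at most $\sum_e\E_\nu[\Ent^\circ_e g]$.

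For the second term, apply the assumed spin factorisation to $\bar g(\sigma)=\E_\nu[g\mid\sigma]$:
\[
\Ent_\mu(\bar g)\le C_{\mathrm{spin}}\sum_v\E_\mu\big[\Ent^\circ_{\mu,v}(\bar g)\big],
\]
where $\Ent_{\mu,v}$ resamples $\sigma_v$ from its Potts conditional, with edges integrated out. The main task is to compare this with the joint single-site terms. Fix $v$ and let $E_v$ be its incident edges, with $|E_v|\le\Delta$. Resampling $\sigma_v$ under $\mu$ given $\sigma_{-v}$ is the same as resampling the block $\{v\}\cup E_v$ in the joint measure given $\sigma_{-v}$ and $A_{E\setminus E_v}$, followed by averaging over $A_{E\setminus E_v}$. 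By convexity of entropy, that is, the chain rule together with Jensen,
\[
\E\big[\Ent^\circ_{\mu,v}(\bar g)\big]\le\E_\nu\big[\Ent^\circ_{\{v\}\cup E_v}(g)\big].
\]
It then remains to factorise the entropy on the small block $\{v\}\cup E_v$ into $\Ent_v$ and $\sum_{e\in E_v}\Ent_e$, with constant $O(q^3\emm^{3\beta\Delta})$.

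This local block factorisation is the main obstacle, because the single-site move at $v$ is frozen whenever some incident edge is open to a neighbour. We would handle it by a comparison of Dirichlet forms on the finite block, which has at most $q2^\Delta$ states. One could bound the constant crudely by (block size)$\times$(inverse minimum transition probability)$\times\log(1/\text{min prob})$ through a canonical-path or Diaconis--Saloff-Coste argument. A cleaner route is a two-stage path: first close all open edges in $E_v$ by single-edge moves, each of which has probability at least $\emm^{-\beta}$ among the conditionally allowed outcomes; then update $\sigma_v$, whose conditional given all incident edges closed is uniform over $q$ colours; then reopen edges. Each configuration of the block has conditional probability at least of order $q^{-1}\emm^{-\beta\Delta}$. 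Combining this with the standard relation between entropy factorisation and the block's spectral gap or log-Sobolev constant (Lemma~\ref{lem:mixingtimebounds}, using $\log(1/\nu_{\min})=O(\log q+\beta\Delta)$) yields a block constant of the form $O(q^3\emm^{3\beta\Delta})$ after absorbing the path lengths.

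Summing the two terms gives \eqref{eq:jointfac}, using $C_{\mathrm{joint}}\ge 1$ and the fact that each edge $e$ lies in the blocks of its two endpoints, which is absorbed into the constant. For the corollary, take $g=f(A)$ depending only on edges. Then $\Ent_v^\circ(g)=0$ and $\Ent_\nu(g)=\Ent_\pi(f)$. Moreover, $\E_\nu[\Ent^\circ_e g]\le\E_\pi[\Ent^\circ_{\pi,e}f]$ by the same convexity argument, because resampling $e$ given the other edges in the RC model equals resampling $e$ jointly with $\sigma$ and then averaging. This gives RC entropy factorisation with constant $C_{\mathrm{joint}}$.
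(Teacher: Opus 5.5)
Your proposal follows the paper's proof essentially step for step: the entropy chain rule over spins, tensorisation of the edges given $\sigma$, spin factorisation applied to $\E_\nu[g\mid\sigma]$, the convexity/contraction reduction to the star block $\{v\}\cup E_v$, a crude local factorisation on that star, double counting of edges, and the corollary via $g(\sigma,A)=f(A)$. The one difference is in the star step, where the paper bounds the gap of the two-block chain (resample $\sigma_v$ given $A_{E_v}$, or $A_{E_v}$ given $\sigma_v$) by coupling through the all-closed state, converts crudely to entropy, and then tensorises $A_{E_v}$ given $\sigma_v$, instead of using single-edge paths. One small correction: a star configuration with $k$ open edges has conditional weight proportional to $(\emm^\beta-1)^k$, so your minimum-probability bound of order $q^{-1}\emm^{-\beta\Delta}$ holds only when $\emm^\beta\ge 2$ (for smaller $\beta$ the $\log(1/\nu_{\min})$ factor picks up a term $\Delta\log\frac{1}{1-\emm^{-\beta}}$), though the paper's equally crude gap-to-entropy conversion shares this looseness.
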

\begin{remarkone}
The same result holds with monochromatic/wired boundary conditions. 

More
precisely, let $G=(V,E)$ be a graph and let $S\subseteq V$. Let
$\Vin=V\setminus S$, and let $\mu^\pl=\mu^\pl_{G;q,\beta,S}$ be the Potts
measure on $\Vin$ obtained by conditioning every vertex of $S$ to a
fixed colour $\pl$. For $\sigma\in [q]^{\Vin}$, we denote by $\sigma^\pl$ the configuration on $V$ that agrees with $\sigma$  on $V\backslash S$ and assigns $\pl$ to the vertices in $S$. Let $\pi^\pl=\pi^\pl_{G;q,\beta,S}$ be the random-cluster measure with all-wired boundary
condition on $S$, namely 
$\pi^\pl(A)
\propto
(\emm^\beta-1)^{|A|}q^{k_S(A)}$ for $A\subseteq E$,
where $k_S(A)$ is the number of components of $(V,A)$ that do not intersect $S$. 

Then Lemma~\ref{lem:lifting} applies with $\mu^\pl,\pi^\pl$ in place of $\mu,\pi$ with the sum over spin-edge
updates in the space $\Vin \cup E$. Indeed,
consider the Edwards--Sokal measure on
$[q]^{\Vin}\times\{0,1\}^E$ given~by
$\nu^\pl(\sigma,A)
\propto
(\emm^\beta-1)^{|A|}
\mathbf 1\{A\subseteq M(\sigma^\pl)\}$,
where $M(\sigma^\pl)$ is the set of monochromatic edges under~$\sigma^\pl$. Its spin marginal is $\mu^\pl_{G;q,\beta}$ and its edge marginal is
$\pi^\pl$, since each open component that does not intersect $S$ may
choose any of the $q$ colours, while every open component intersecting $S$ is
forced to have colour $\pl$. 
\end{remarkone}
\begin{proof}[Proof of Lemma~\ref{lem:lifting}]
For $g\geq0$, define $\hat g(\sigma) \coloneqq \E_\nu[g\mid \sigma]$. 
We have the following entropy chain rule, with respect to the vertex marginal $\mu$ of $\nu$:
\[\Ent_\nu(g) =
        \Ent_\mu(\hat g)+
\E_\nu\left[\Ent^{\circ}_E(g)\right].
\]
Indeed, let $\phi(x)=x\log x$ and recall $\Ent[f]=\E[\phi(f)-\phi(\E(f))]$. We have
$\E_\mu[\hat g]=\E_\nu[g]$, so \[\Ent_\nu(g)-\Ent_\mu(\hat g)=\E_\nu[\phi(g)]-\E_\mu[\phi(\hat g)]=\E_{\sigma\sim\mu}\big[\E_\nu[\phi(g)-\phi(\hat g)\mid \sigma]\big]=\E_\nu\left[\Ent^{\circ}_E(g)\right],\]
where the last equality follows from  $\hat g(\sigma)= \E_\nu[g\mid \sigma]$ and recalling that $\E_\nu\left[\Ent^{\circ}_E(g)\right]$ means that we condition on the configuration ``off $E$'' (which in this setting are the vertex spins).

Conditioned on $\sigma\in [q]^V$, the edges  in the Edwards--Sokal coupling are
independent. Hence the product-distribution tensorisation of entropy gives
\begin{equation}\label{eq:productstar12}
\E_\nu\left[\Ent^{\circ}_E(g)\right]\leq
        \sum_{e\in E} \E_\nu\left[\Ent^{\circ}_e\left(g\right)\right].
\end{equation}
By the entropy factorisation inequality for $\mu$ on $\hat g$, we have
$\Ent_\mu(\hat g) \leq C_{\mathrm{spin}} \sum\limits_{v\in V}\E_\mu\left[\Ent^\circ_v(\hat g)\right].
$
We show next that for every $v\in V$ it holds that
\begin{equation}\label{eq:vg1}
\E_\mu\left[\Ent^\circ_v(\hat g)\right]\leq O(q^3  \emm^{3\beta \Delta})\Big(\E_\nu\left[\Ent^{\circ}_v\left(g\right)\right]+\sum_{e\in E_v}\E_\nu\left[\Ent^{\circ}_e\left(g\right)\right]\Big),
\end{equation}
where $E_v$ denotes the edges incident to $v$.
By summing this over $v\in V$ and combining with the above yields \eqref{eq:jointfac}, after noting that each edge is counted twice when taking the sum.\enlargethispage{\baselineskip}

To prove \eqref{eq:vg1}, we condition on the vertex spins outside $v$, namely $\sigma_{V\backslash v}=\tau_{V\backslash v}$ for some $\tau\in [q]^{V}$, and on the edge spins outside $E_v$, namely  $A_{E\backslash E_v}=B_{E\backslash E_v}$ for some $B\in \{0,1\}^E$. Under this joint conditioning, the only remaining variables are the spin $\sigma_v$ and its incident edges $A_{E_v}$. On the induced star, we will show  that
\begin{equation}\label{eq:indstar}
\Ent^{\tau,B}_{v,E_v}(g)
\leq
O( q^3  \emm^{3\beta \Delta})
\Big(
\E^{\tau,B}_{v,E_v}[\Ent_v^\circ(g)]
+
\sum_{e\in E_v}\E^{\tau,B}_{v,E_v}[\Ent_e^\circ(g)]
\Big).
\end{equation}
We briefly describe the proof for this before proceeding. Let $\rho=\nu^{\tau,B}_{v,E_v}$ and consider a block chain   $P_\star$ for $\rho$ which, at each step,
chooses with probability $1/2$ to resample $\sigma_v$ conditional on $A_{E_v}$,
and with probability $1/2$ to resample $A_{E_v}$ conditional on $\sigma_v$. Then we have  that $\gamma(P_\star)\geq 1/O(q^2 \emm^{2\beta \Delta})$ via a coupling argument: for any $\sigma_v$, an update of $A_{E_v}$ leads to  $A(e)=0$ for all $e\in E_v$ in both configurations with probability $1/O(q^2 \emm^{2\beta \Delta})$, and then an update of $\sigma_v$ leads to coupled configurations. This can then be translated crudely to a bound on the modified log-Sobolev constant (using, e.g., the comparison bounds in Lemma~\ref{lem:mixingtimebounds}). This gives $\Ent_{\rho}(g)\leq O(q^3 \emm^{3\beta \Delta})(\E_{\rho}[\Ent^\circ_v(g)]+\E_{\rho}[\Ent^\circ_{E_v}(g)])$ and then  \eqref{eq:indstar} follows by applying the product-distribution tensorisation of entropy (analogously to~\eqref{eq:productstar12}).

Using~\eqref{eq:indstar}, we can now conclude the remaining piece \eqref{eq:vg1}.  Let $\psi^\tau=\nu^{\tau}_{v,E\backslash E_v}$ denote the marginal edge distribution of $\nu$ on $E\backslash E_v$ conditioned on $\tau_{V\backslash v}$. Let also $\tau_{i}$ denote the spin configuration obtained from $\tau$ by changing the spin of $v$ to $i\in [q]$. For $i\in [q]$, consider the function $h_{\tau,B}(i)=\E^{\tau,B}_{v,E_v}[g\mid \sigma_v=i]$ which takes an average over the edges in $E_v$. Note that the edges in $E\setminus E_v$ are not incident to $v$, so their distribution conditioned on
$\sigma_{V\setminus\{v\}}=\tau_{V\setminus\{v\}}$ does not depend on the value of $\sigma_v$, so $\hat g(\tau_{ i})=\E_\nu[g\mid \sigma=\tau_i]=\E_{B\sim\psi^\tau}h_{\tau,B}(i)$. By convexity of entropy applied to the average over $B$, we therefore have that $\Ent^\tau_v(\hat g)\leq  \E_{B\sim \psi^\tau}\Ent^\tau_v(h_{\tau,B})$. 

In turn, we have the entropy contraction  $\Ent^\tau_v(h_{\tau,B})\leq \Ent^{\tau,B}_{v,E_v}(g)$. To see  this,  let  $\phi=x\log x$ and recall that $\Ent[f]=\E[\phi(f)]-\phi(\E[f])$. Observe that $\E^\tau_v h_{\tau,B}=\E^{\tau,B}_{v,E_v}(g)$, so their $\phi$-values that appear in the entropy are the same; the entropy contraction then follows by using the convexity of $\phi$ to obtain
\[\E^\tau_v(\phi(h_{\tau,B}))=
\E^\tau_v\big(\phi(\E^{\tau,B}_{v,E_v}[g\mid \sigma_v])\big)\leq \E^\tau_v\big(\E^{\tau,B}_{v,E_v}[\phi(g)\mid \sigma_v]\big)=\E^{\tau,B}_{v,E_v}(\phi(g)).
\]
Combining the entropy convexity/contraction above and then applying \eqref{eq:indstar} we obtain that
\begin{align*}
\Ent^\tau_v(\hat g) &\leq \E_{B\sim \psi^\tau}\Ent^\tau_v(h_{\tau,B}) \leq \E_{B\sim \psi^\tau}\Ent^{\tau,B}_{v,E_v}(g)  \\
&\leq O( q^3  \emm^{3\beta \Delta})\E_{B\sim \psi^\tau}\Big(\E^{\tau,B}_{v,E_v}[\Ent_v^\circ(g)+\sum_{e\in E_v}\E^{\tau,B}_{v,E_v}[\Ent_e^\circ(g)]\Big)\\
&=O( q^3  \emm^{3\beta \Delta})\Big(
\E_{\nu^\tau}[\Ent_v^\circ(g)]
+
\sum_{e\in E_v}\E_{\nu^\tau}[\Ent_e^\circ(g)]
\Big),
\end{align*}
where $\nu^\tau=\nu(\cdot \mid \sigma_{V\backslash v}=\tau_{V\backslash v})$. 
Taking expectation over $\tau\sim \mu$ yields \eqref{eq:vg1} as needed.

To conclude that $\pi$ satisfies
entropy factorisation with constant $C_{\mathrm{joint}}$, take any function $f:\{0,1\}^{E}\rightarrow \mathbb{R}_{\geq 0}$ and apply the spin-edge entropy factorisation inequality to the function $g$ given by $g(\sigma,A)=f(A)$. Then, the sum over $v\in V$ is zero since $g$ does not depend on spins, and analogously $\Ent_\nu(g)=\Ent_\pi(f)$. For any $e\in E$ we also have that $\E_\nu\left[\Ent^{\circ}_e(g)\right]\leq \E_\pi\left[\Ent^{\circ}_e(f)\right]$. Indeed, since $g(\sigma,A)=f(A)$, 
$\E_\nu[\Ent^\circ_e(g)]$ is the conditional entropy of $f(A)$
when $A_e$ is resampled conditioned on both $A_{E\setminus{e}}$ and
the spin configuration $\sigma$. By monotonicity of conditional entropy,
$\E_\nu[\Ent^\circ_e(g)]
\leq
\E_\nu\left[\Ent\left(f(A)\mid A_{E\setminus{e}}\right)\right]=
\E_\pi[\Ent^\circ_e(f)]$
where the equality uses that $\pi$ is the edge marginal of $\nu$. This concludes the desired factorisation for~$\pi$.
\end{proof}
With Lemma~\ref{lem:lifting} at hand, the proof of Lemma~\ref{lem:conversion} is just a matter of combining the pieces.
\begin{proof}[Proof of Lemma~\ref{lem:conversion}]
By Theorem~\ref{thm:main1} and Remark~\ref{remark:running time}, we have a lower bound of the log-Sobolev constant of the Potts Glauber dynamics. 
By Lemma~\ref{lem:mixingtimebounds}, we get that $\mu^\pl$ satisfies entropy factorisation with constant  $n^{o(1)}$.
Applying Lemma~\ref{lem:lifting}, we obtain that $\pi^\pl$ also satisfies entropy factorisation with constant $n^{o(1)}$ when the maximum degree of $G$ is at most ${O(\log n / \log \log n)}$, which happens with the desired probability by Lemma~\ref{lem:degree-sum}.
Together with marginal boundedness of $\pi^{\pl}$, we obtain corresponding lower bounds for both the spectral gap and the log-Sobolev constant, which also yields the mixing time for the RC dynamics.
\end{proof}

\bibliographystyle{plainurl}
\bibliography{fn}

\appendix

\section{The RC dynamics}\label{sec:defprelims}
The RC dynamics is an analogue of the Potts Glauber dynamics: the states are the subsets of $E$. The transition from $X_t$ to $X_{t+1}$ is done as follows:

\begin{enumerate}
    \item Pick a uniformly random edge $e\in E$
    \item If $e$ is a cut-edge in $(V, X_t\cup\{e\})$, then $X_{t+1} = X_t \cup\{e\}$ with probability $\hat p \coloneqq \frac{\emm^\beta - 1}{q + \emm^\beta - 1}$, and $X_{t+1} = X_t\setminus \{e\}$ otherwise.
    \item If $e$ is not a cut-edge in $(V, X_{t}\cup\{e\})$, then $X_{t+1} = X_{t}\cup\{e\}$ with probability $p \coloneqq 1-\emm^{-\beta}$, and $X_{t+1} = X_{t}\setminus \{e\}$ otherwise.
\end{enumerate}
We note that each step of the chain can be run in amortised time $O((\log n)^2)$, see, e.g., \cite{Huang2023FullyTime}.

\section{Remaining Proofs for Mixing}
Recall, for a graph~$G$ rooted at a vertex~$v$, $G'$ denotes the graph obtained from $G$ by~deleting~$v$. 
\subsection{Proof of Lemma~\ref{cor:decomp}}\label{sec:decomp}
We shall use the following generalisation of variance decomposition 
property \eqref{eq:var_total_law}.
\begin{lemma}[Variance analogue of {\cite[Lemma~3.1]{Caputo2021}}]\label{lem:var_decomposition}
        Let $G=(V,E)$ be a graph and $S\subseteq V$. Let $S_0,\hdots, S_{m+1}$ be vertex subsets such that $\emptyset = S_0 \subset S_1 \subset \dots \subset S_{m + 1} = S$. Then for  any  $\eta \in \Omega$ and $g \colon \Omega \to \mathbb{R}$,
        \[\Var^\eta_S(g) = \sum_{j = 1}^{m + 1} \E^\eta_S \Var_{S_j}^{\circ} \E_{S_{j - 1}}^{\circ}(g).\]
    \end{lemma}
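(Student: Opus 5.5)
The identity follows by telescoping with the law of total variance \eqref{eq:var_total_law}. I will prove, by induction on $k\in\{0,\dots,m+1\}$, the statement
\[
\Var^\eta_S(g)=\sum_{j=k+1}^{m+1}\E^\eta_S\Var^\circ_{S_j}\E^\circ_{S_{j-1}}(g)+\Var^\eta_S\E^\circ_{S_k}(g),
\]
which for $k=m+1$ reduces to the claim once the residual term is shown to vanish.

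For the base case $k=0$ the sum is empty. Since $S_0=\emptyset$, the operator $\E^\circ_{\emptyset}$ is the identity, so the right-hand side is just $\Var^\eta_S(g)$.

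For the inductive step, assume the statement holds for $k$. Set $h:=\E^\circ_{S_k}(g)$, which is a function on $\Omega$, and apply \eqref{eq:var_total_law} to $h$ with $B=S_{k+1}\subseteq A=S$:
\[
\Var^\eta_S h=\E^\eta_S\Var^\circ_{S_{k+1}}h+\Var^\eta_S\E^\circ_{S_{k+1}}h .
\]
By the tower property ($S_k\subseteq S_{k+1}$), we have $\E^\circ_{S_{k+1}}h=\E^\circ_{S_{k+1}}\E^\circ_{S_k}g=\E^\circ_{S_{k+1}}g$. Substituting this back gives the statement for $k+1$.

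At $k=m+1$, the function $\E^\circ_{S_{m+1}}(g)=\E^\circ_S(g)$ depends only on the configuration outside $S$. Under $\nu^\eta_S$ that configuration is fixed to $\eta$, so this function is almost surely constant and its variance $\Var^\eta_S\E^\circ_S(g)$ is zero. This leaves exactly the claimed sum.

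The only delicate point is that each conditional measure $\nu^\tau_{S_j}$ must be well defined for $\tau$ in the support of $\nu^\eta_S$. This holds because $\tau$ lies in $\Omega$ (the support of $\nu$) and agrees with $\eta$ off $S$. Beyond this support check there is no real obstacle; the whole argument is the standard Caputo-style chain-rule telescoping carried over from entropy to variance.
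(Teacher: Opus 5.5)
Your route is the standard one: you iterate the law of total variance \eqref{eq:var_total_law} along the chain $S_0\subset S_1\subset\cdots\subset S_{m+1}$ and use the tower property. This is exactly what the paper relies on when it cites Caputo's entropy chain rule (it gives no separate proof), and your termination and support remarks are correct. The one thing to fix is the induction hypothesis, which should read $\Var^\eta_S(g)=\sum_{j=1}^{k}\E^\eta_S\Var^\circ_{S_j}\E^\circ_{S_{j-1}}(g)+\Var^\eta_S\E^\circ_{S_k}(g)$. With your range $\sum_{j=k+1}^{m+1}$, the sum at $k=0$ is the full sum rather than empty, and the inductive step adds the $(k+1)$-st term instead of removing it. With $\sum_{j=1}^{k}$, every step you describe goes through verbatim.
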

The following property will also be useful (see, e.g., \cite[Lemma 15]{blanca2023swendsen}). For any
$f \colon \Omega \to \mathbb{R}$, $\tau\in \Omega$ and
$A,B\subseteq T$ 
such that the tree distance between~$A$ and~$B$ is at least~$2$, it holds that
\begin{equation} \label{eq:var_convexity} 
\Var^\tau_A \E_B^{\circ} f  \leq \E^\tau_B \Var^{\circ}_A f. 
\end{equation}
\begin{cordecomp}
\statecordecomp
\end{cordecomp}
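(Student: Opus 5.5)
We prove the statement by induction on the height of the subtree $S=T_v$, combining the two-step variance decomposition \eqref{eq:var_total_law} with the convexity property \eqref{eq:var_convexity}.

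\textbf{Step 1: peel off the root.} Apply \eqref{eq:var_total_law} with $A=S=T_v$ and $B=T_v'$:
\[\Var^\eta_{T_v}(g)=\E^\eta_{T_v}\Var^\circ_{T_v'}(g)+\Var^\eta_{T_v}\E^\circ_{T_v'}(g).\]
The second term is exactly the summand for $w=v$ on the right-hand side. It remains to bound the first term by the sum over $w\in T_v'$.

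\textbf{Step 2: split over the children.} Let $u_1,\dots,u_k$ be the children of $v$, so that $T_v'$ is the disjoint union of $T_{u_1},\dots,T_{u_k}$. Once the configuration outside $T_v'$ is fixed, in particular $\sigma_v$, these subtrees are separated from each other. Since $\nu$ is a Gibbs distribution (footnote~\ref{fn:Gibbs}), the conditional measure $\nu^{\tau}_{T_v'}$ is therefore a product of the measures $\nu^\tau_{T_{u_i}}$. By the product-distribution tensorisation of variance recalled earlier,
\[\Var^\tau_{T_v'}(g)\le\sum_{i}\E^\tau_{T_v'}\Var^\circ_{T_{u_i}}(g).\]
Taking $\E^\eta_{T_v}$ and using the tower property gives
\[\E^\eta_{T_v}\Var^\circ_{T_v'}(g)\le\sum_i\E^\eta_{T_v}\Var^\circ_{T_{u_i}}(g).\]
This step needs no convexity; the only point to check is that conditioning on $\sigma_v$ decouples the child subtrees.

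\textbf{Step 3: induct.} Apply the induction hypothesis to each $T_{u_i}$, a subtree of smaller height, with boundary configuration $\tau$:
\[\Var^\tau_{T_{u_i}}(g)\le\sum_{w\in T_{u_i}}\E^\tau_{T_{u_i}}\Var^\circ_{T_w}\E^\circ_{T_w'}(g).\]
Taking $\E^\eta_{T_v}$, and using $\E^\eta_{T_v}\E^\circ_{T_{u_i}}=\E^\eta_{T_v}$ (tower property, since $T_{u_i}\subseteq T_v$), each term becomes $\E^\eta_{T_v}\Var^\circ_{T_w}\E^\circ_{T_w'}(g)$. Summing over $i$ covers every $w\in T_v'$ exactly once. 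The base case is a leaf $v$, where $T_v'=\emptyset$ and the claim holds with equality.

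\textbf{Expected obstacle.} The main delicacy is bookkeeping rather than analysis. One must make sure the conditional measures are well defined on the support $\Omega$ and that the product structure in Step~2 holds for every $\tau$. If the plain induction ran into trouble, the fallback is Lemma~\ref{lem:var_decomposition} with a bottom-up filtration. List the vertices $w_1,\dots,w_m$ of $S$ in reverse BFS order, so that descendants come first, and set $S_j=\bigcup_{i\le j}T_{w_i}$. Then $S_j\setminus S_{j-1}=\{w_j\}$, and the filtration telescopes into the terms $\E\Var_{S_j}\E_{S_{j-1}}(g)$. Each such term would be compared to $\E\Var_{T_{w_j}}\E_{T_{w_j}'}(g)$, using \eqref{eq:var_convexity} to discard the parts of $S_{j-1}$ at distance $\ge2$ from $T_{w_j}$. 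That comparison is where the real care would be needed, and it is why I prefer the direct induction.
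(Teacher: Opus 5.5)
Your proof is correct, and it takes a different route from the paper's.

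\textbf{The paper's route.} The paper does not induct. It applies the multi-step chain rule of Lemma~\ref{lem:var_decomposition} to a bottom-up level filtration $S_1\subset S_2\subset\cdots\subset S_{m+1}=S$, where each step adds one level of the tree. At each step it tensorises $\Var^\circ_{S_j}$ over the subtrees $T_w$ rooted at the newly added level. This leaves terms $\Var^\circ_{T_w}\big(\E^\circ_{S_{j-1}}g\big)$, which still contain averaging over $S_{j-1}\setminus T_w$, i.e.\ over the other subtrees at that level. The paper removes that averaging with the convexity inequality \eqref{eq:var_convexity}, which requires $S_{j-1}\setminus T_w$ to be at tree distance at least $2$ from $T_w$.

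\textbf{Your route.} Your top-down recursion peels off the root with the two-set law of total variance \eqref{eq:var_total_law}. It then tensorises over the child subtrees \emph{before} any averaging outside a subtree has been introduced. So every term produced already has the form $\E\,\Var^\circ_{T_w}\E^\circ_{T'_w}(g)$. You need neither the separation/convexity step nor the multi-step decomposition lemma. Apart from the product tensorisation, every step is an equality or the tower property.

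\textbf{Comparison.} Your only extra cost is an induction over subtrees with arbitrary boundary conditions. This is harmless, because the statement is quantified over all $v$, $\eta$ and $g$. The paper's version buys a one-shot telescoping argument in the style of \cite{Caputo2021} and \cite{MSW03}. Its reliance on \eqref{eq:var_convexity} costs the paper little, since that inequality is needed anyway in the proof of Lemma~\ref{lem:nonrecoalt}. Your fallback, a single-vertex reverse-BFS filtration plus convexity, is essentially the paper's argument with a finer filtration, and you correctly identify the convexity comparison as the step your induction avoids.
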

    \begin{proof}
    Fix any $\eta\in \Omega$ and function $g: \Omega \to \mathbb{R}$.  Let $S_j$ be the set of vertices in $H$ in the $j$ lowest levels of $T$
in the ordinary graph distance, with $S_0=\emptyset$ and $S_{m+1}=S$.
By Lemma~\ref{lem:var_decomposition},
\[\Var^\eta_S(g) = \sum_{j = 1}^{m + 1} \E^\eta_S\Big[ \Var_{S_j}^{\circ} \big(\E_{S_{j - 1}}^{\circ}(g)\big)\Big]
\]
Conditioned on the complement of $S_j$, the distribution on $S_j$ is a
product over the subtrees rooted at $S_j\setminus S_{j-1}$. Therefore,
using the product-measure variance inequality, for any $\tau\in \Omega$, we have that $\Var^{\tau}_{S_j}
        \big(\E^{\circ}_{S_{j-1}}g\big)\leq \sum_{v\in S_j\backslash S_{j-1}} \Var^{\tau}_{T_v}\big(\E^{\circ}_{S_{j-1}}g\big)$, so we obtain that
\[\Var^\eta_S(g)\le
        \sum_{j=1}^{m+1}
        \sum_{w\in S_j\setminus S_{j-1}}\E^\eta_S\left[\Var^{\circ}_{T_w}\big(\E^{\circ}_{S_{j-1}}g\big)
        \right].\]
For $w\in S_j\setminus S_{j-1}$, note that $S_{j-1}$ the disjoint union of $S_{j-1}\backslash T_w$ and $S_{j-1}\cap T_w=T_w'$, so we can write 
\begin{align*}
\E^\eta_S\left[\Var^{\circ}_{T_w} \! \big(\E^{\circ}_{S_{j-1}}g\big)
\right]&=\E^\eta_S\left[\Var^{\circ}_{T_w} \! \big(\E^{\circ}_{S_{j-1}\backslash T_w}\E^{\circ}_{T_w'}g\big)
        \right]\leq
        \E^\eta_S\left[\E^{\circ}_{S_{j-1\backslash T_w}}\!\big(\Var^{\circ}_{T_w}\big(\E^{\circ}_{T_w'}g\big)\big)
        \right]\\
        &=\E^\eta_S
        \left[
          \Var^{\circ}_{T_w}
\! \big(\E^{\circ}_{T'_w}g\big)
        \right],
\end{align*}
where the  inequality follows from convexity of variance, cf. \eqref{eq:var_convexity}. Therefore,
\begin{equation*}
\Var^\eta_{S}(g)\le
        \sum_{j=1}^{m+1}
        \sum_{w\in S_j\setminus S_{j-1}}
        \E^\eta_S
        \left[
          \Var^{\circ}_{T_w}
\big(\E^{\circ}_{T'_w}g\big)
        \right]= \sum_{w\in S} \E^\eta_S
        \left[
          \Var^{\circ}_{T_w}
          \left(\E^{\circ}_{T'_w}g\right)
        \right].\qedhere
\end{equation*}
    \end{proof}

\subsection{Proof of Lemma~\ref{lem:nonrecoalt}} \label{sec:nonrecoalt}
The key step in the proof of Lemma~\ref{lem:nonrecoalt} that utilises nonreconstruction is the following.
\begin{lemma}
\label{lem:f3f34f4}
Suppose that $\rho\in (0,\tfrac{1}{20})$ and that $\mathcal{B}=\{B_v\}_{v\in \Vin(T)}$ is a rooted block decomposition of~$T$ that satisfies non-reconstruction with block decay~$\rho$ under the distribution~$\nu$. 

Then, for every $v\in T$ and  $\eta\in\Omega$, for any
function $g:\Omega\to\mathbb R$,
\[
        \Var^{\eta}_{T_v}
        \left(\E^{\circ}_{T'_v}g\right)
        \le
        4\E^{\eta}_{T_v}
        \left[\Var^{\circ}_{B_v}(g)\right]
        +
        5\rho\E^{\eta}_{T_v}
        \left[\Var^{\circ}_{T'_v}(g)\right].
\]
\end{lemma}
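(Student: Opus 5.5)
Fix $v$ and $\eta$, and let $F \coloneqq \E^{\circ}_{T'_v} g$. Since $\nu$ is Gibbs on a tree, $F$ depends only on $\sigma_v$ once the outside of $T_v$ is fixed by $\eta$. So $\Var^\eta_{T_v}(F)$ is the variance of a function of the single spin $\sigma_v\sim\nu^\eta_{T_v}(\sigma_v=\cdot)$. We can write it as
\[
\tfrac12\sum_{i,j}\pi_i\pi_j\,(F(i)-F(j))^2, \qquad \pi_i\coloneqq\nu^\eta_{T_v}(\sigma_v=i).
\]
The plan is to bound each difference $F(i)-F(j)$ by splitting $g$ through the block $B_v$.

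\textbf{Splitting $g$.} Let $U\coloneqq T_v\setminus B_v$, and set $g_1\coloneqq \E^{\circ}_{B'_v}g$ and $g_2\coloneqq g-g_1$. Here $B'_v=B_v\setminus\{v\}$ is conditioned on $\sigma_v$ and on the spins of $U$, so $g_1$ depends only on $\sigma_v$ and $\sigma_U$, and $g_2$ has conditional mean zero over $B'_v$. Write $F=F_1+F_2$ with $F_k=\E^\circ_{T'_v}g_k$.

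\textbf{The $g_2$ part.} Conditioning $T'_v$ on $\sigma_v=i$ and first averaging over $U$ gives $F_2(i)=\E[\E_{B'_v}[g_2]]$, which is not automatically zero. This is because resampling $B'_v$ while $U$ is fixed and $T'_v$ is free mixes two orders of averaging. I will instead bound $F_2$ crudely using Cauchy--Schwarz and the law of total variance~\eqref{eq:var_total_law}:
\[
\Var^\eta_{T_v}(F_2)\le \E^\eta_{T_v}\bigl[\Var^\circ_{T'_v}\text{-type terms of }g_2\bigr]\lesssim \E^\eta_{T_v}[\Var^\circ_{B_v}(g)].
\]
This step should produce the constant $4$, via $(a+b)^2\le 2a^2+2b^2$ applied twice.

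\textbf{The $g_1$ part and non-reconstruction.} The function $g_1$ depends on $\sigma_U$ only through how it interacts with $B_v$, that is, through $\sigma_{\partial B_v}$ after averaging the rest of $U$. Given $\sigma_{\partial B_v}$, the subtrees hanging below $\partial B_v$ are conditionally independent of $\sigma_v$. So
\[
F_1(i)-F_1(j)=\E[\tilde g(\sigma_{\partial B_v})\mid\sigma_v=i]-\E[\tilde g\mid \sigma_v=j]
\]
for a suitable $\tilde g$ (plus the explicit dependence on $\sigma_v$, which is handled in the $B_v$ term). Take the optimal coupling of the two boundary laws; by~\eqref{eq:nonreco5454} it disagrees with probability at most $\rho$. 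Cauchy--Schwarz then gives
\[
(F_1(i)-F_1(j))^2\le 2\rho\,\bigl(\Var(\tilde g\mid \sigma_v=i)+\Var(\tilde g\mid\sigma_v=j)\bigr),
\]
after centering $\tilde g$. Averaging over $i,j$ and using that conditional variance is dominated by $\Var^\circ_{T'_v}(g)$, via contraction of variance under conditional expectation, yields the term $5\rho\,\E^\eta_{T_v}[\Var^\circ_{T'_v}(g)]$. The constant absorbs the cross term, since $\rho<1/20$.

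\textbf{Main obstacle.} The delicate step is the Cauchy--Schwarz coupling bound. It must control the squared difference by a variance rather than a sup-norm, and the centering has to be done so that the resulting variance is genuinely $\Var^\circ_{T'_v}$ under the correct measure. Pinning down which variance appears, and checking that the cross terms from the split $g=g_1+g_2$ give exactly the constants $4$ and $5\rho$, is where I expect most of the care to be needed. I would first try centering $\tilde g$ at its mixture mean $\sum_i\pi_i\E[\tilde g\mid\sigma_v=i]$. I would then use the fact that the coupled pair agrees off an event of probability $\le\rho$.
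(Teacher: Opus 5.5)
There is a genuine gap, and it sits exactly where you defer the work. With your split $g_1=\E^\circ_{B'_v}g$, the $g_2$-part is not just small but vanishes identically. Since $B'_v\subseteq T'_v$, the tower property gives $\E^\circ_{T'_v}g_2=\E^\circ_{T'_v}g-\E^\circ_{T'_v}\E^\circ_{B'_v}g=0$. So your remark that $F_2$ is ``not automatically zero'' is wrong, and $F=F_1$. Your decomposition then has no term left that could produce $\E^\eta_{T_v}[\Var^\circ_{B_v}(g)]$, so the phrase ``the explicit dependence on $\sigma_v$ \ldots\ is handled in the $B_v$ term'' has nothing behind it.

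That explicit dependence is precisely what breaks your coupling step. With $\tilde g=\tilde g(\sigma_v,\sigma_{\partial B_v})$, the coupled difference $\tilde g(i,X^i)-\tilde g(j,X^j)$ does not vanish on $\{X^i=X^j\}$, so non-reconstruction yields no factor $\rho$. Controlling $\tilde g(i,x)-\tilde g(j,x)$ directly would require a pointwise comparison of $\nu^\eta_{T_v}(\sigma_v=i)$ with $\nu(\sigma_v=i\mid\sigma_{\partial B_v}=x)$, which a TV bound does not give.

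The missing idea is to average over the whole block, root included: set $g_1=\E^\circ_{B_v}g$, which does not depend on $\sigma_{B_v}$ at all.
\begin{itemize}
\item Couple $\sigma^i\sim\nu^\eta_{T_v}(\cdot\mid\sigma_v=i)$ and $\sigma^j\sim\nu^\eta_{T_v}(\cdot\mid\sigma_v=j)$ on $T_v\setminus B_v$. By the Markov property their TV distance equals that of the $\partial B_v$ marginals, hence is at most $\rho$. This gives $(F_1(i)-F_1(j))^2\le\rho\,\E[(g_1(\sigma^i)-g_1(\sigma^j))^2]$, and hence $\Var^\eta_{T_v}(\E^\circ_{T'_v}g_1)\le 2\rho\Var^\eta_{T_v}(g_1)\le 2\rho\Var^\eta_{T_v}(g)$.
\item The remainder $g_2=g-g_1$ satisfies $\E^\eta_{T_v}g_2=0$, so by Jensen $\Var^\eta_{T_v}(\E^\circ_{T'_v}g_2)\le\E^\eta_{T_v}[g_2^2]=\E^\eta_{T_v}[\Var^\circ_{B_v}(g)]$.
\end{itemize}

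After this fix, your planned closing step no longer works. You intended to dominate $\Var(\tilde g\mid\sigma_v=i)$ by $\Var^\circ_{T'_v}(g)$. Take $g$ depending only on $\sigma_v$: then $\Var^\circ_{T'_v}(g)=0$, while $g_1$ is a function of $\sigma_{\partial B_v}$ that in general fluctuates given $\sigma_v$.

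This is why the paper bounds by the total variance $\Var^\eta_{T_v}(g)$ and then rearranges using $\Var^\eta_{T_v}(g)=\Var^\eta_{T_v}(\E^\circ_{T'_v}g)+\E^\eta_{T_v}[\Var^\circ_{T'_v}(g)]$. The condition $4\rho<1/5$ then turns $2/(1-4\rho)$ and $4\rho/(1-4\rho)$ into the constants $4$ and $5\rho$.
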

\begin{proof}
Fix $v\in T$, $\eta\in \Omega$, and arbitrary $g:\Omega \rightarrow \mathbb{R}$. Decompose
\[
        g =  g_1+ g_2 \mbox{ where } g_1=\E^{\circ}_{B_v}g \mbox{ and } g_2=g-\E^{\circ}_{B_v}g.
\]
Using $\Var(U+V)\le 2\Var(U)+2\Var(V)$,
we get
\begin{equation}\label{eq:95686}
\Var^{\eta}_{T_v}\left(\E^{\circ}_{T'_v}g\right)\leq 2\Var^{\eta}_{T_v}\left(\E^{\circ}_{T'_v} g_1\right)+2\Var^{\eta}_{T_v}\left(\E^{\circ}_{T'_v} g_2\right)
\end{equation}
We next show  that
\begin{align}
\Var^{\eta}_{T_v}\left(\E^{\circ}_{T'_v} g_2\right)\leq \E^{\eta}_{T_v}\left(\Var^{\circ}_{B_v}g\right),\label{eq:n78757}\\
\Var^{\eta}_{T_v}\left(\E^{\circ}_{T'_v} g_1\right)\leq 2 \rho\Var^{\eta}_{T_v}\left( g_1\right)\leq  2 \rho\Var^{\eta}_{T_v}\left( g\right).\label{eq:j786888}
\end{align}
Assuming these for the moment, \eqref{eq:95686} gives
\begin{equation}\label{eq:956861}
\Var^{\eta}_{T_v}\left(\E^{\circ}_{T'_v}g\right)\leq 2\E^{\eta}_{T_v}\left(\Var^{\circ}_{B_v}g\right)+4 \rho\Var^{\eta}_{T_v}\left( g\right)
\end{equation}
The law of total variance gives that $\Var^{\eta}_{T_v}\left( g\right)=\Var^{\eta}_{T_v}\left(\E^{\circ}_{T'_v} g\right)+\E^{\eta}_{T_v}\left(\Var^{\circ}_{T_v'} g\right)$, so plugging this in \eqref{eq:956861} we obtain after rearranging that
\[\Var^{\eta}_{T_v}\left(\E^{\circ}_{T'_v}g\right)\leq \tfrac{2}{1-4\rho}\E^{\eta}_{T_v}\left(\Var^{\circ}_{B_v}g\right)+\tfrac{4 \rho}{1-4\rho}\E^{\eta}_{T_v}\left(\Var^{\circ}_{T_v'} g\right),\]
that yields the statement in the lemma using that $4\rho<1/5$.

It remains to show \eqref{eq:n78757} and \eqref{eq:j786888}. For \eqref{eq:n78757}, we have that $\E^{\eta}_{T_v}g_2=\E^{\eta}_{T_v}(g-\E^{\circ}_{B_v}g)=0$, $\E^{\eta}_{T_v}\E^{\circ}_{T'_v}g_2=0$ so $\Var^{\eta}_{T_v}\left(\E^{\circ}_{T'_v} g_2\right)=\E^{\eta}_{T_v}\big(\E^{\circ}_{T'_v} g_2\big)^2$. By convexity, we therefore have that
\[\Var^{\eta}_{T_v}\left(\E^{\circ}_{T'_v} g_2\right)\leq \E^{\eta}_{T_v}\big(g_2^2 \big)=\E^{\eta}_{T_v}\left(\E^{\circ}_{B_v}\big(g_2^2 \big)\right)=\E^{\eta}_{T_v}\big(\E^{\circ}_{B_v}(g-\E^{\circ}_{B_v}g)^2\big)=\E^{\eta}_{T_v}\left(\Var^{\circ}_{B_v}g\right),\]
as needed. The second inequality in \eqref{eq:j786888}  follows from the law of total variance \[\Var^{\eta}_{T_v}\left( g\right)=\Var^{\eta}_{T_v}\left( \E^{\circ}_{B_v}g\right)+\E^{\eta}_{T_v}\left(\Var^{\circ}_{B_v}g\right)\geq \Var^{\eta}_{T_v}\left( \E^{\circ}_{B_v}g\right)=\Var^{\eta}_{T_v}\left( g_1\right).\] For the first inequality in \eqref{eq:j786888}, we have 
\begin{equation}\label{eq:45t5g4g400}
\Var^{\eta}_{T_v}\left(\E^{\circ}_{T'_v} g_1\right)=\Var^{\eta}_{T_v}\left(\E_{T_v}^\eta[ g_1\mid \sigma_v]\right)=\frac{1}{2}\sum_{1\leq i,j\leq q}p_ip_j(G_i-G_j)^2
\end{equation}
where for $i\in [q]$ we define $p_i=\nu^\eta_{T_v}(\sigma_v=i)$ and $G_i=\E_{T_v}^\eta[ g_1\mid \sigma_v=i]$; let also $m=\sum_{i}p_iG_i$. Note that  $v$ separates $T_v$ from the rest of $T$ and $\partial B_v$ separates $T_v\backslash B_v$ from $B_v$. By nonreconstruction on the block $B_v$ (and using the Gibbs property of $\nu$, cf. Footnote~\ref{fn:Gibbs}), we have 
\begin{align*}\Big\|
   \nu_{T_v}^\eta&\big(\sigma_{T_v\backslash B_v}=\cdot\,\mid \sigma_v=i\big)-
   \nu_{T_v}^\eta\big(\sigma_{T_v\backslash B_v}=\cdot\,\mid \sigma_v=j\big)
 \big)\Big\|_{\mathrm{TV}}\\
 &\hspace{4cm}
 \leq \Big\|
   \nu_{\partial B_v}\big(\,\cdot\,\mid \sigma_v=i\big)-
   \nu_{\partial B_v}\big(\,\cdot\,\mid \sigma_v=j\big)
 \big)\Big\|_{\mathrm{TV}}\leq \rho.
 \end{align*}
 So there is a coupling $\mu$ of  $\sigma^i\sim\nu_{T_v}^\eta(\cdot\mid\sigma_v=i)$ and $\sigma^j\sim\nu_{T_v}^\eta(\cdot\mid \sigma_v=j)$ so that $\mu\big(\sigma^i_{T_v\backslash B_v}\neq \sigma^j_{T_v\backslash B_v}\big)\leq \rho$. Note that $g_1$ does not depend on the spins of $B_v$ and $\sigma^i,\sigma^j$ agree on $T\backslash T_v$ (since they are both sampled from $\nu_{T_v}^\eta$), so we can write 
 \[G_i-G_j=\E_\mu\big[\big(g_1(\sigma^i)-g_1(\sigma^j)\big)\mathbf{1}\big\{\sigma^i_{T_v\backslash B_v}\neq \sigma^j_{T_v\backslash B_v}\big\}\big],\] 
 so  by Cauchy-Schwarz
 \[(G_i-G_j)^2\leq \E_\mu\Big[\big(g_1(\sigma^i)-g_1(\sigma^j)\big)^2\Big]\E_\mu\Big[\{\mathbf{1}\big\{\sigma^i_{T_v\backslash B_v}\neq \sigma^j_{T_v\backslash B_v}\big\}\Big]\leq \rho \E_\mu\Big[\big(g_1(\sigma^i)-g_1(\sigma^j)\big)^2\Big].\]
Note that $\big(g_1(\sigma^i)-g_1(\sigma^j)\big)^2\leq  2\big(g_1(\sigma^i)-m\big)^2+2\big(g_1(\sigma^j)-m\big)^2$, so plugging these estimates back into \eqref{eq:45t5g4g400} we obtain that
\[\Var^{\eta}_{T_v}\left(\E^{\circ}_{T'_v} g_1\right)\leq \rho\sum_{i} p_i \E_\mu\Big[\big(g_1(\sigma^i)-m\big)^2\Big]+\rho\sum_{j} p_j \E_\mu\Big[\big(g_1(\sigma^j)-m\big)^2\Big]=2\rho\Var^{\eta}_{T_v}(g_1),\]
thus completing the proof of \eqref{eq:j786888}, and hence the proof of Lemma~\ref{lem:f3f34f4}.
\end{proof}

We conclude this section with the proof of Lemma~\ref{lem:nonrecoalt}.
\begin{lemnonrecoalt}
\statenonrecoalt
\end{lemnonrecoalt}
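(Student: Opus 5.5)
The plan is to derive the statement from Lemma~\ref{lem:f3f34f4}, which already gives
\[
\Var^{\eta}_{T_v}\big(\E^{\circ}_{T'_v}f\big)\le 4\E^{\eta}_{T_v}\big[\Var^{\circ}_{B_v}(f)\big]+5\rho\,\E^{\eta}_{T_v}\big[\Var^{\circ}_{T'_v}(f)\big].
\]
So everything reduces to bounding the residual $\E^{\eta}_{T_v}[\Var^{\circ}_{T'_v}(f)]$ by the block-indexed sum $\sum_{w\in B'_v\cup\partial B_v}\E^{\eta}_{T_v}[\Var^\circ_{T_w}(\E^\circ_{T'_w}f)]$. The proof in Lemma~\ref{cor:decomp} is the template: a level-by-level (or coarser) filtration via Lemma~\ref{lem:var_decomposition}, followed by product tensorisation and the convexity inequality~\eqref{eq:var_convexity}. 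The new point is that the filtration should be adapted to the block $B_v$ rather than to single levels.

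First I split the region being resampled as $T'_v=B'_v\sqcup (T_v\setminus B_v)$, where $T_v\setminus B_v=\bigsqcup_{u\in\partial B_v}T_u$. By the law of total variance~\eqref{eq:var_total_law} with the inner set $T_v\setminus B_v$, for every fixed outer configuration,
\[
\Var_{T'_v}(f)=\E_{T'_v}\big[\Var^\circ_{T_v\setminus B_v}(f)\big]+\Var_{T'_v}\big(\E^\circ_{T_v\setminus B_v}f\big).
\]
For the second term, the function $\E^\circ_{T_v\setminus B_v}f$ depends on the spins inside $T'_v$ only through $B'_v$. I then run the level decomposition of Lemma~\ref{cor:decomp}, restricted to the levels of $B'_v$, using the Gibbs property (only the parents of $\partial B_v$ interact with the averaged region). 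This produces one term $\Var^\circ_{T_w}(\E^\circ_{T'_w}\,\cdot)$ for each $w\in B'_v$. Each such term is brought to the form $\Var^\circ_{T_w}(\E^\circ_{T'_w}f)$ by the same convexity step~\eqref{eq:var_convexity}, since the part of $S_{j-1}$ outside $T_w$ is at distance $\ge 2$ from $T_w$.

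For the first term, conditionally on $B_v$ the subtrees $T_u$, $u\in\partial B_v$, are independent. Product tensorisation therefore gives $\E[\Var^\circ_{T_v\setminus B_v}f]\le\sum_{u\in\partial B_v}\E[\Var^\circ_{T_u}f]$. Each summand splits once more, by total variance, into $\E[\Var^\circ_{T_u}(\E^\circ_{T'_u}f)]$, which is exactly the desired $\partial B_v$ term, and a remainder $\E[\Var^\circ_{T'_u}f]$.

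The main obstacle is this remainder: the index set $B'_v\cup\partial B_v$ allows no term from deeper in $T_u$. My first attempt is to avoid creating it by choosing the filtration so that $T_u$ is only ever resampled after $T'_u$ has already been averaged out. In Lemma~\ref{lem:var_decomposition} I would take $S_1=T_v\setminus\big(B_v\cup\partial B_v\big)$ and $S_2=T_v\setminus B_v$, followed by the levels of $B'_v$. The step $S_1\subset S_2$ then contributes exactly $\Var^\circ_{\{u\}\cup S_1}(\E^\circ_{S_1}f)$, which tensorises into $\sum_{u}\Var^\circ_{T_u}(\E^\circ_{T'_u}f)$. The first step $\E[\Var^\circ_{S_1}f]$ is handled by a finer version of the argument in Lemma~\ref{lem:f3f34f4} itself: in its proof, the $5\rho$ term comes from $\Var(g_1)$ with $g_1=\E^\circ_{B_v}f$. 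If I rerun that proof keeping $\Var^\eta_{T_v}(g_1)$ instead of passing to $\Var(g)$, then the deep part $S_1$ only enters through $\E^\circ_{B_v}$-averaged quantities, and I expect it to be controlled via the coupling there, which only sees disagreements on $T_v\setminus B_v$. If this still leaves a deep remainder, the fallback is to keep it as $\sum_{w\in T'_v}$ via Lemma~\ref{cor:decomp} and verify that the summation in the proof of Theorem~\ref{thm:block-dstar} still closes. I expect this bookkeeping, that is, showing only $O(1)$ such terms per $w$ survive beyond the cover-radius count, to be the delicate part.
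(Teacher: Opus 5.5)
\textbf{There is a genuine gap: the first step reduces the lemma to an inequality that is false in general.} Let $R\coloneqq T_v\setminus(B_v\cup\partial B_v)$. Take $\nu$ to be the uniform (product) measure on $[q]^T$, which satisfies non-reconstruction with any $\rho>0$. Let $f(\sigma)=\mathbf{1}\{\sigma_x=1\}$ for some $x\in R$. Then $\E^{\eta}_{T_v}[\Var^{\circ}_{T'_v}(f)]=\frac1q(1-\frac1q)>0$. On the other hand, every summand $\E^{\eta}_{T_v}[\Var^\circ_{T_w}(\E^\circ_{T'_w}f)]$ with $w\in B'_v\cup\partial B_v$ vanishes:
\begin{itemize}
\item if $w$ is not an ancestor of $x$, then $f$ does not depend on the spins of $T_w$;
\item if $w$ is an ancestor of $x$, then $x\in T'_w$ and $\E^\circ_{T'_w}f\equiv 1/q$.
\end{itemize}
So no bound of $\E^{\eta}_{T_v}[\Var^{\circ}_{T'_v}(f)]$ by the block-indexed sum can exist.

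Your filtration with $S_1=R$ isolates exactly the culprit: the first step $\E[\Var^\circ_R f]$, which here is the whole residual. Your proposed remedy does not remove it. In this example $g_1=\E^\circ_{B_v}f=f$, so keeping $\Var^\eta_{T_v}(g_1)$ still carries the deep variance; averaging over $B_v$ does nothing to spins in $R$. The fallback of summing over all $w\in T'_v$ proves a weaker lemma. In Theorem~\ref{thm:block-dstar} it would count each $w$ once per ancestor, i.e.\ up to its depth, and that is not bounded by $R^*$.

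\textbf{The missing idea is to average out $R$ before invoking Lemma~\ref{lem:f3f34f4}, rather than trying to bound the residual afterwards.} Set $g\coloneqq\E^\circ_R f$. Since $R\subseteq T'_v$, we have $\E^\circ_{T'_v}g=\E^\circ_{T'_v}f$, so applying Lemma~\ref{lem:f3f34f4} to $g$ leaves the left-hand side unchanged. The two resulting terms are handled as follows.
\begin{itemize}
\item The block term satisfies $\E^\eta_{T_v}[\Var^\circ_{B_v}(g)]\le\E^\eta_{T_v}[\Var^\circ_{B_v}(f)]$ by \eqref{eq:var_convexity}, because $\partial B_v$ separates $B_v$ from $R$.
\item For the residual, Lemma~\ref{cor:decomp} on $T'_v$ gives terms indexed by $w\in T'_v$. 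Those with $w\in R$ vanish, since $T_w\subseteq R$ and $g$ does not depend on the spins there. For $w\in B'_v\cup\partial B_v$, write $\E^\circ_{T'_w}g=\E^\circ_{R\setminus T_w}\E^\circ_{T'_w}f$ and apply \eqref{eq:var_convexity} once more, exactly as you already do for your second term.
\end{itemize}
By total variance, $\E[\Var^\circ_{T'_v}f]=\E[\Var^\circ_R f]+\E[\Var^\circ_{T'_v}g]$. Your filtration steps after $S_1$ bound precisely $\E[\Var^\circ_{T'_v}g]$. Pre-averaging over $R$ simply ensures that the uncontrollable term $\E[\Var^\circ_R f]$ never appears.
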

\begin{proof}
Let $R=T_v\backslash (B_v\cup \partial B_v)$ and $g=\E^\circ_R(f)$. Since $R\subseteq T_v'$, note that $\E^\circ_{T_v'}(g)=\E^\circ_{T_v'}\E^\circ_R(f)=\E^\circ_{T_v'}(f)$, so by Lemma~\ref{lem:f3f34f4} we have
\[
\Var^{\eta}_{T_v}[\E^{\circ}_{T_v'}(f)]=\Var^{\eta}_{T_v}[\E^{\circ}_{T_v'}(g)]\leq
4\E^{\eta}_{T_v}
        \left[\Var^{\circ}_{B_v}(g)\right]
        +
        5\rho\E^{\eta}_{T_v}
        \left[\Var^{\circ}_{T'_v}(g)\right]
\]
Since $B_v$ and $R$ are separated by $\partial B_v$, they have distance at least 2 in $T$, so by convexity of variance (cf. \eqref{eq:var_convexity}) we have
\[
        \E^{\eta}_{T_v}
        \left[\Var^{\circ}_{B_v}(g)\right]
        =
        \E^{\eta}_{T_v}
        \left[\Var^{\circ}_{B_v}(\E^{\circ}_{R}f)\right]
        \le \E^{\eta}_{T_v}
        \left[\E^{\circ}_{R}\big(\Var^{\circ}_{B_v}f\big)\right]=
        \E^{\eta}_{T_v}
        \left[\Var^{\circ}_{B_v}(f)\right],
\]
so it only remains to show that
\begin{equation}\label{eq:goalrf4f4f}
\E^{\eta}_{T_v}
        \left[\Var^{\circ}_{T'_v}(g)\right]\leq \sum_{w\in B_v'\cup \partial B_v } \E^{\eta}_{T_v}\big[\Var^\circ_{T_w}[\E^\circ_{T_w'}(f)]\big].
\end{equation}
Indeed, applying Lemma~\ref{cor:decomp} for $S=T'_v$ gives, for every
boundary condition $\tau\in \Omega$,
\[
\Var_{T'_v}^{\tau}(g)
\le
\sum_{w\in T'_v}
\E_{T'_v}^{\tau}
\left[
\Var^{\circ}_{T_w}
\left(\E^\circ_{T'_w}g\right)
\right].
\]
Note that $T_v'=R\cup (B_v'\cup \partial B_v)$. For a vertex $w\in R$, we have  $T_w\subseteq R$ and $g=E_R^\circ f$ is independent of the spins in $T_w$, so $\Var^{\circ}_{T_w}
\big(\E^\circ_{T'_w}g\big)=0$. Therefore, after taking expectation over
$\eta\sim\nu_{T_v}^\eta$,
\begin{equation}\label{eq:infefer}
\E^\eta_{T_v}
\left[
\Var^{\circ}_{T'_v}(g)
\right]
\leq
\sum_{w\in B_v'\cup \partial B_v}
\E^\eta_{T_v}
\left[
\Var^{\circ}_{T_w}
\left(\E^\circ_{T'_w}g\right)
\right].
\end{equation}
 Now consider an arbitrary $w\in B_v'\cup \partial B_v$ and set $Q_w \coloneqq R\setminus T_w$. Since $R\cap T_w\subseteq T'_w$,
\[
\E^\circ_{T'_w}g
=
\E^\circ_{T'_w}\E^\circ_R f
=
\E^\circ_{Q_w}\E^\circ_{T'_w}f .
\]
The set $Q_w$ is at tree distance at least $2$ from $T_w$. Applying \eqref{eq:var_convexity} with
$A=T_w$ and $B=Q_w$, we get for any $\tau\in \Omega$
\[
\Var^{\tau}_{T_w}
\left(\E^\circ_{T'_w}g\right)
=
\Var^{\tau}_{T_w}
\left(\E^\circ_{Q_w}\E^\circ_{T'_w}f \right)
\leq
\E^\tau_{Q_w}
\left[
\Var^{\circ}_{T_w}
\left(\E^\circ_{T'_w}f\right)
\right].
\]
Taking expectation over $\tau\sim \nu^\eta_{T_v}$ and using that $\E^\eta_{T_v}\E^\circ_{Q_w}=\E^\eta_{T_v}$, this yields
\[
\E^\eta_{T_v}
\left[
\Var^{\circ}_{T_w}
\left(\E^\circ_{T'_w}g\right)
\right]
\leq
\E^\eta_{T_v}
\left[
\Var^{\circ}_{T_w}
\left(E^\circ_{T'_w}f\right)
\right].
\]
Summing over $w\in B_v'\cup \partial B_v$ and combining with \eqref{eq:infefer} establishes \eqref{eq:goalrf4f4f}, completing therefore the proof.
\end{proof}

\subsection{Proof of Lemma~\ref{lem:spectologSob}}\label{sec:lemspectologSob}
\begin{lemspectologSob}
\statelemspectologSob
\end{lemspectologSob}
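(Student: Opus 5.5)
We will prove a factorisation of entropy for $\nu$ over the subtrees $T_v$, run by induction along the levels of $T$ in the same way as the variance decomposition of Lemma~\ref{cor:decomp}. Each resulting term will then be converted into single-site variances of $\sqrt f$ using the spectral gaps of the subtree dynamics together with marginal boundedness. The target is a log-Sobolev-type inequality
\[
\Ent_T(f)\;\le\; 2h\,\frac{\log((1/b)-1)}{1-2b}\,\frac{1}{\gamma^*|T|}\sum_{u\in T}\E_T\big[\Var^\circ_u(\sqrt f)\big],
\]
which by the definition of $\alpha$ in Section~\ref{sec:funpreliminaries} is exactly the claimed bound.

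\textbf{Step 1: entropy decomposition.} Let $S_j$ be the union of the $j$ lowest levels of $T$. The entropy analogue of Lemma~\ref{lem:var_decomposition} (this is \cite[Lemma~3.1]{Caputo2021} itself) gives
\[
\Ent_T(f)=\sum_{j}\E_T\big[\Ent^\circ_{S_j}\big(\E^\circ_{S_{j-1}}f\big)\big].
\]
Conditioned on everything outside $S_j$, the measure on $S_j$ is a product over the subtrees $T_w$ with $w\in S_j\setminus S_{j-1}$. Product tensorisation of entropy therefore gives $\Ent^\circ_{S_j}(\cdot)\le\sum_{w}\Ent^\circ_{T_w}(\cdot)$. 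Because the subtrees are separated, $\E^\circ_{S_{j-1}}f=\E^\circ_{S_{j-1}\setminus T_w}\E^\circ_{T'_w}f$, and by convexity of entropy (the analogue of \eqref{eq:var_convexity}) we can drop the outer average. We could simply bound $\Ent^\circ_{T_w}(\E^\circ_{T'_w}f)\le \Ent^\circ_{T_w}(f)$, but that is too crude. We keep the $T_w$-level term instead and use
\[
\Ent_T(f)\le\sum_{w}\E_T\big[\Ent^\circ_{T_w}(f)\big]
\]
over the \emph{last} nontrivial level only. This is a recursive argument, and the factor $h$ will arise from summing over levels.

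\textbf{Step 2: the alternative route via levels.} An arrangement with a cleaner factor $h$ follows \cite[Theorem~5.7]{MSW03}. For each level $j$, the conditional measure on the subtrees rooted at level $j$ given their complement is a product of the measures $\nu^\tau_{T_w}$. For each such subtree, Lemma~\ref{lem:mixingtimebounds} gives log-Sobolev constant $\ge K(b)\gamma(P^\tau_{T_w})\ge K(b)\gamma^*|T|/|T_w|$, where we use $b$-marginal boundedness, inherited by conditionals, in place of $\nu_{\min}$. The route through $K(b)$ goes via $\hat\alpha\ge$ something, and we would rather take the comparison $\alpha\ge K(b)\gamma$ directly in its marginally bounded form, as in Diaconis--Saloff-Coste. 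Summing the local log-Sobolev inequalities over $w$ at a level gives
\[
\E_T\big[\Ent^\circ_{T_w}(f)\big]\le \frac{1}{K(b)\gamma^*|T|}\sum_{u\in T_w}\E_T\big[\Var^\circ_u(\sqrt f)\big].
\]
Since the subtrees at one level are disjoint, summing over $w$ at that level counts each $u$ at most once. Summing over the $h$ levels then counts each $u$ at most $h$ times, which yields the factor $h$ (up to the factor $2$). To get an inequality on $\Ent_T(f)$ in the first place, we apply Step 1 with the chain of sets equal to the levels. Each term $\E_T[\Ent^\circ_{S_j}(\E^\circ_{S_{j-1}}f)]$ is then bounded by $\sum_{w\in\text{level } j}\E_T[\Ent^\circ_{T_w}(f)]$, using tensorisation and the fact that conditional expectation contracts entropy.

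\textbf{Main obstacle.} The delicate point is justifying $\alpha(P^\tau_{T_w})\ge K(b)\gamma(P^\tau_{T_w})$ using only single-site marginal lower bounds $b$, rather than $\nu_{\min}$ of the whole subtree, which is exponentially small. The plan is to pass through $\hat\alpha\le 2\gamma$ in the reverse direction: we would first try to obtain a modified log-Sobolev (entropy factorisation) bound directly from variance factorisation plus marginal boundedness, and then use $\alpha\ge K(b)\hat\alpha$ from Lemma~\ref{lem:mixingtimebounds}. If this fails, we would instead do the level-by-level decomposition with variance (Lemma~\ref{cor:decomp}) and convert only at single sites, where $\Ent^\circ_u(f)\le \frac{1}{K(b)}\Var^\circ_u(\sqrt f)$ is the standard two-point-type bound. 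This gives the constant $\frac{1-2b}{\log(1/b-1)}$ and the loss of $2h$.
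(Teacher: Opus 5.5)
There is a genuine gap. Your Step~2 contracts each level term to $\sum_{w}\E_T[\Ent^\circ_{T_w}(f)]$, and then needs $\alpha(P^\tau_{T_w})\ge K(b)\,\gamma(P^\tau_{T_w})$ for every subtree. Lemma~\ref{lem:mixingtimebounds} does not give this. It gives only two things:
\begin{itemize}
\item $\alpha\ge K(\nu_{\min})\gamma$, where $\nu_{\min}$ is the minimum of the whole subtree measure. This is also what the Diaconis--Saloff-Coste comparison involves, so it is exponentially lossy in $|T_w|$.
\item $\alpha\ge K(b)\hat\alpha$, which needs an entropy factorisation you do not have.
\end{itemize}
The argument is also circular. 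The top level of your decomposition is $w=r$ with $T_r=T$, so you are assuming $\alpha(P)\ge K(b)\gamma(P)\ge K(b)\gamma^*$. That is already stronger than the lemma (there is no factor $1/(2h)$), and the decomposition then does no work. Your fallback does not repair this. Lemma~\ref{cor:decomp} decomposes variance, not entropy. The single-site conversion $\Ent^\circ_u(f)\le K(b)^{-1}\Var^\circ_u(\sqrt f)$ only helps once an entropy factorisation over single sites is known, and that is exactly the missing ingredient.

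The missing idea is the step you dismissed in Step~1: do \emph{not} contract $\Ent^\tau_{T_w}(\E^\circ_{T'_w}f)$ to $\Ent^\tau_{T_w}(f)$.
\begin{itemize}
\item Given $\tau$ off $T_w$, the function $\E^\circ_{T'_w}f$ depends only on $\sigma_w$. Under $\nu^\tau_{T_w}$, every positive atom of $\sigma_w$ is at least $b$.
\item The one-site log-Sobolev inequality (the independent-resampling chain has gap $1$) therefore gives $\Ent^\tau_{T_w}(\E^\circ_{T'_w}f)\le C(b)\Var^\tau_{T_w}\big(\sqrt{\E^\circ_{T'_w}f}\big)$, with $C(b)=\frac{\log(1/b-1)}{1-2b}$.
\item Since $\sqrt{\E[f\mid\cdot\,]}\ge\E[\sqrt f\mid\cdot\,]$, this is at most $C(b)\Var^\tau_{T_w}(\sqrt f)$.
\item Now the \emph{spectral gap} of $P^\tau_{T_w}$ bounds this by $\frac{C(b)}{|T|\gamma^*}\sum_{u\in T_w}\E^\tau_{T_w}[\Var^\circ_u(\sqrt f)]$. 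No log-Sobolev constant for a multi-site system is ever needed.
\end{itemize}
The paper runs this recursively. It uses the chain rule $\Ent^\tau_{T_v}(f)=\E^\tau_{T_v}[\Ent^\circ_{T'_v}(f)]+\Ent^\tau_{T_v}(\E^\circ_{T'_v}f)$. The second term is handled as above. The first is handled by tensorisation over child subtrees and induction on their log-Sobolev constants. This yields $\frac{1}{|T_v|\alpha(P^\tau_{T_v})}\le\frac{C(b)}{|T_v|\gamma(P^\tau_{T_v})}+\max_{u\prec v,\eta}\frac{1}{|T_u|\alpha(P^\eta_{T_u})}$, which unrolls along root-to-leaf paths to the factor $h+1\le 2h$.

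Your level decomposition would serve equally well as a non-recursive version. Its entropy analogue of Lemma~\ref{cor:decomp} is $\Ent_T(f)\le\sum_{w\in T}\E_T[\Ent^\circ_{T_w}(\E^\circ_{T'_w}f)]$. Keep $\E^\circ_{T'_w}$ inside each term and treat it as above. Each $u$ is then charged once per ancestor-or-self, at most $h+1$ times.
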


\begin{proof}
	Let $v \in V(T)$ and consider a feasible boundary condition $\tau$ outside
	$T_v$. Let $\alpha(P^\tau_{T_v})$ and $\gamma(P^\tau_{T_v})$ be the log-Sobolev constant
	and spectral gap of the discrete-time Glauber dynamics for $\nu^\tau_{T_v}$.
	Thus, for any $g \colon \Omega\to\mathbb R$,
	\[\mathcal{E}_{T_v}^\tau(g) \coloneqq \frac{1}{|T_v|} \sum_{w \in T_v}\ \E^\tau_{T_v} \left[\Var^\circ_w (g)\right],
    \hspace{1.5em}
    \gamma(P^\tau_{T_v}) \coloneqq \inf_g \frac{\mathcal{E}_{T_v}^\tau(g)}{\Var^\tau_{T_v} (g)},
	\hspace{1.5em}
    \alpha(P^\tau_{T_v}) \coloneqq \inf_{f \geq 0} \frac{\mathcal{E}_{T_v}^\tau(\sqrt f)}{\Ent^\tau_{T_v}(f)}.\]
	
	Fix a non-negative function $f$. By the entropy variant of (\ref{eq:var_total_law}), we have that
	\begin{equation} \label{eq:gsc1}
		\Ent^\tau_{T_v} (f) = \E^\tau_{T_v} \left[\Ent^\circ_{T'_v} (f)\right] + \Ent^\tau_{T_v} \left[\E^\circ_{T'_v} (f)\right].
	\end{equation}
	
	We begin by upper-bounding the first term on the r.h.s of (\ref{eq:gsc1}). We use the notation $u \prec v$ to denote that $u$ is a child of $v$. Conditioned on the spin at $v$, the Gibbs distribution $\nu$ on $T'_v$ is product over those on child subtrees $T_u$ for $u \prec v$. Therefore, using the product-measure entropy inequality, we get
	\begin{equation*}
		\E^\tau_{T_v} \left[\Ent^\circ_{T'_v} (f)\right] \leq \sum_{u \prec v} \E^\tau_{T_v} \left[\Ent^\circ_{T_u} (f)\right] 
	\end{equation*}	

	Applying the log-Sobolev inequality inside each $T_u$ yields
	\begin{align}
		\E^\tau_{T_v} \left[\Ent^\circ_{T'_v} (f)\right] &\leq \sum_{u \prec v} \E^\tau_{T_v} \left[\frac{1}{|T_u| \alpha(P^\circ_{T_u})} \sum_{w \in T_u} \E^\circ_{T_u} \left[\Var^\circ_w (\sqrt{f})\right]\right] \nonumber \\
		&\leq \sum_{u \prec v}  \left(\max_{\eta}\frac{1}{|T_u| \alpha(P^\eta_{T_u})}\right) \sum_{w \in T_u} \E^\tau_{T_v} \left[\E^\circ_{T_u} \left[\Var^\circ_w (\sqrt{f})\right]\right] \nonumber \\
		&= \sum_{u \prec v}  \left(\max_{\eta}\frac{1}{|T_u| \alpha(P^\eta_{T_u})}\right) \sum_{w \in T_u} \E^\tau_{T_v} \left[\Var^\circ_w (\sqrt{f})\right] \nonumber \\
		&\leq \left(\max_{u \prec v, \eta}\frac{1}{|T_u| \alpha(P^\eta_{T_u})}\right) \sum_{w \in T'_v} \E^\tau_{T_v} \left[\Var^\circ_w (\sqrt{f})\right] \label{eq:gsc4}
	\end{align}
	where the third equality follows from the fact that for any $\tau \in \Omega$ and $g \coloneqq \Omega \to \mathbb{R}$, if $B \subseteq A$ then $\E^\tau_A \E_B^{\circ} (f) = \E^\tau_A (f)$.
	
	We next upper-bound the second term on the r.h.s of (\ref{eq:gsc1}). Since $\nu$ is $b$-marginally bounded, every positive marginal probability of $\nu^\tau_{T_v}$ at $v$ is at least $b$. Let $C(b) \coloneqq \frac{\log((1/b) - 1)}{1 - 2b}$. From Lemma \ref{lem:mixingtimebounds}, noting that the modified log-Sobolev of a one-site chain is $1$, we have that the log-Sobolev inequality with function $g = \E^\circ_{T'_v} (f)$ gives
	\begin{equation} \label{eq:gsc2}
		\Ent^\tau_{T_v} \left[\E^\circ_{T'_v} (f)\right] \leq C(b) \Var^\tau_{T_v} \left[\sqrt{\E^\circ_{T'_v} (f)}\right] \leq C(b) \Var^\tau_{T_v} \left(\sqrt{f}\right).
	\end{equation}
	
	By the spectral-gap inequality for $P^\tau_{T_v}$,
	\begin{equation} \label{eq:gsc3}
		\Var^\tau_{T_v} (\sqrt{f}) \leq \frac{1}{|T_v| \gamma(P^\tau_{T_v})} \sum_{w \in T_v} \E^\tau_{T_v} \left[\Var^\circ_w (\sqrt{f})\right].
	\end{equation}
	
	Combining (\ref{eq:gsc2}) and (\ref{eq:gsc3}) together, we obtain
	\begin{equation} \label{eq:gsc5}
			\Ent^\tau_{T_v} \left[\E^\circ_{T'_v} (f)\right] \leq \frac{C(b)}{|T_v| \gamma(P^\tau_{T_v})} \sum_{w \in T_v} \E^\tau_{T_v} \left[\Var^\circ_w (\sqrt{f})\right].
	\end{equation}
	
	Consequently, from (\ref{eq:gsc1}), (\ref{eq:gsc4}) and (\ref{eq:gsc5}) we obtain that
	\begin{equation} \label{eq:gsc6}
		\Ent^\tau_{T_v} (f) \leq \left(\frac{C(b)}{|T_v| \gamma(P^\tau_{T_v})} + \max_{u \prec v, \eta}\frac{1}{|T_u| \alpha(P^\eta_{T_u})}\right) \sum_{w \in T_v} \E^\tau_{T_v} \left[\Var^\circ_w (\sqrt{f})\right].
	\end{equation}
	
	Using the definition of $\alpha(P^\tau_{T_v})$ and taking the infimum over all non-negative functions $f$, from (\ref{eq:gsc6}) we get
	\begin{equation*}
		\frac{1}{|T_v| \alpha(P^\tau_{T_v})} \leq \frac{C(b)}{|T_v| \gamma(P^\tau_{T_v})} + \max_{u \prec v, \eta}\frac{1}{|T_u| \alpha(P^\eta_{T_u})} 
	\end{equation*}
	and recursing along every root-to-leaf path from the root of $T$ yields
	\begin{equation*}
		\frac{1}{|T| \alpha} \leq \frac{(h + 1) C(b)}{\min_{v, \eta} |T_v| \gamma(P^\eta_{T_v})}.
	\end{equation*}
	
	Re-arranging and substituting for $\gamma^*$, we get $\alpha \geq \frac{1}{2h}  \frac{1-2b}{\log ((1/b)-1)} \gamma^*$, as required.
\end{proof}

\section{Structural Properties of Poisson Trees and Random Graphs}
Throughout this section, we fix a real number~$\epsilon \in (0,1)$ and a 
positive integer~$d$ that is sufficiently large with respect to~$\epsilon$. Let $D \coloneqq \lceil (1-\epsilon) d \rceil$.
Let $\cT$ be the distribution of the infinite Poisson tree with parameter~$d$.
\subsection{Poisson Trees: Proof of Lemma~\ref{lem:degree-sum}}\label{sec:tree-properties}
To facilitate the proof of Lemma~\ref{lem:degree-sum}, 
we define  a larger subtree of the infinite Poisson tree~$\cT$ which contains $T^h_{v,\ell}$ and 
prove a version of Lemma~\ref{lem:degree-sum} for this larger subtree.
Given an infinite tree~$T$ and a vertex $v\in V(T)$,
$T_v$ denotes the subtree of~$T$ rooted at~$v$.
We start with a definition that is analogous to Definition~\ref{def:good-bad-vertices}
but without reference to the truncation parameter~$h$.

\begin{definition}\label{def:good-inf} Let $T \sim \cT$.
A vertex~$v$ of $T$ is said to be \emph{good} if  
$T_v$ has an infinite $D$-ary subtree rooted at~$v$.
Otherwise, $v$ is said to be \emph{bad}.
\end{definition}

\begin{definition}\label{def:pgood} 
Let $p_\good$ be   the largest positive solution to $p_\good = \Pr(\Poisson(dp_\good) \geq D)$  and let $p_\bad \coloneqq 1 - p_\good$.
\end{definition}

\begin{lemma}[{\cite[Theorem 5.29]{Lyons_Peres_2017}}]\label{lem:pgood-exists}
$p_\good$ is well-defined and is in $[0,1]$. If $T\sim \cT$ then 
the probability that any $v\in V(T)$ is good is $p_\good$.
\end{lemma}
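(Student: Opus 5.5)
The statement to prove is Lemma~\ref{lem:pgood-exists}: $p_\good$ is well defined, and each vertex of $T\sim\cT$ is good with probability $p_\good$. I would argue through the classical fixed-point characterisation of the event ``$T_v$ contains an infinite $D$-ary subtree rooted at $v$''. Because the Poisson Galton--Watson tree is self-similar, $T_v$ has law $\cT$ for every vertex $v$ (conditionally on $v$ existing). It therefore suffices to treat the root $r$.

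\textbf{Step 1: finite-depth approximation.} For $k\ge 0$, let $g_k$ be the probability that $r$ is ``$k$-good''. Here every vertex is $0$-good, and $r$ is $k$-good if it has at least $D$ children that are $(k-1)$-good.

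The children's subtrees are i.i.d.\ copies of $\cT$, so the number of $(k-1)$-good children is a thinning of $\Poisson(d)$, namely $\Poisson(d g_{k-1})$. This gives the recursion $g_k=\phi(g_{k-1})$ with $g_0=1$, where
\[
\phi(x)\coloneqq \Pr(\Poisson(dx)\ge D).
\]
The map $\phi$ is continuous and nondecreasing on $[0,1]$, with $\phi(1)\le 1$. Hence $g_k$ decreases monotonically to a limit $g_\infty$ that is a fixed point of $\phi$. By induction, $g_k\ge x^*$ for every fixed point $x^*$, since $x^*\le 1=g_0$ and $\phi$ is monotone. So $g_\infty$ is the largest fixed point in $[0,1]$. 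This shows that $p_\good$ exists (taking $0$ if it is the only fixed point) and lies in $[0,1]$.

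\textbf{Step 2: identify the limit.} I must show that $\{r \text{ good}\}=\bigcap_k\{r\text{ is }k\text{-good}\}$ almost surely; continuity from above then gives $\Pr(r\text{ good})=g_\infty=p_\good$.

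The inclusion $\subseteq$ is immediate: truncating an infinite $D$-ary subtree at depth $k$ witnesses $k$-goodness.

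For $\supseteq$ I would use a compactness (K\"onig) argument. Almost surely $T$ is locally finite, since Poisson degrees are finite. For each $k$, the set of depth-$k$ $D$-ary witness subtrees is finite and nonempty. Restriction maps depth-$(k+1)$ witnesses to depth-$k$ witnesses, so these sets form an inverse system of finite nonempty sets. Its inverse limit is nonempty, and an element of it is an infinite $D$-ary subtree rooted at $r$.

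\textbf{Main subtlety.} The delicate point is exactly this exchange between ``for every depth $k$'' and ``infinite'', which needs local finiteness; the argument above handles it.

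It also remains to confirm that the phrase ``largest positive solution'' matches ``largest fixed point''. When $d$ is large relative to $D=\lceil(1-\epsilon)d\rceil$, the value $\phi(1)=\Pr(\Poisson(d)\ge D)$ is close to $1$ by concentration. Since $g_\infty\ge x^*$ for every fixed point $x^*$, it then suffices to exhibit one positive fixed point. For this I take $x_0=1-\epsilon/2$, for which $d x_0>D$ with margin, so $\phi(x_0)\ge x_0$ by Poisson concentration once $d$ is large. Because $\phi$ maps $[x_0,1]$ into itself, the intermediate value theorem yields a fixed point in $[x_0,1]$. Hence $p_\good$ is positive, and in fact close to $1$, which is what the later sections use.
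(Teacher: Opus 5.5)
Your proof is correct and takes essentially the standard route behind the result the paper cites; the paper gives no argument of its own, only the reference to Lyons--Peres. You iterate $\phi$ from $g_0=1$ so that it converges monotonically to the largest fixed point, and use local finiteness (K\"onig) to identify $\bigcap_k\{r \text{ is } k\text{-good}\}$ with the existence of an infinite $D$-ary subtree; Poisson thinning is what turns the general Galton--Watson fixed-point equation into $x=\phi(x)$. Your final paragraph on positivity of $p_\good$ is not needed for this lemma, since that is the content of Lemma~\ref{lem:pbad-small}, which uses the same intermediate-value argument.
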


\begin{definition}\label{def:inf-Tvl}
Let $\ell$ be a positive integer and
let $T\sim \cT$.  For every vertex $v\in V(T)$, 
let $T_{v,\ell}$ be the maximal subtree of~$T_v$ in which each root-to-leaf path contains at most $\ell$ good vertices, excluding the root~$v$.
\end{definition}

    

Lemma~\ref{lem:goodvarious} justifies Definition~\ref{def:good-inf}.

\begin{lemma}\label{lem:goodvarious}
Fix a positive integer~$h$.
Let $T\sim \cT$. Let $v$ be a vertex at depth at most~$h$ in~$T$.
If $v$ is good (Definition~\ref{def:good-inf}) then it is $h$-good (Definition~\ref{def:good-bad-vertices}).
\end{lemma}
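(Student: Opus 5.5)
We argue by backward induction on depth, from $h$ down to $0$, proving the stronger claim that every good vertex $v$ at depth $k\le h$ is $h$-good.

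\textbf{Base case.} If $v$ is at depth exactly $h$, then $v$ is $h$-good by the first clause of Definition~\ref{def:good-bad-vertices}, whether or not it is good.

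\textbf{Inductive step.} Let $k<h$, suppose the claim holds at depth $k+1$, and let $v$ be a good vertex at depth $k$. By Definition~\ref{def:good-inf}, $T_v$ contains an infinite $D$-ary subtree $A$ rooted at $v$. Let $u_1,\dots,u_D$ be the children of $v$ in $A$; these are distinct children of $v$ in $T$. For each $i$, the descendants of $u_i$ in $A$ form an infinite $D$-ary subtree of $T_{u_i}$ rooted at $u_i$. Hence each $u_i$ is good.

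Each $u_i$ lies at depth $k+1\le h$, so the induction hypothesis makes each $u_i$ $h$-good. Thus $v$ has at least $D$ children that are $h$-good, and since $v$ has depth less than $h$, the second clause of Definition~\ref{def:good-bad-vertices} makes $v$ $h$-good.

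\textbf{Remarks.} Nothing here is a real obstacle. The only subtlety is that $h$-goodness is defined recursively from the leaves upward, so the induction has to run from depth $h$ toward the root, not from the root downward. Being good is a property of $T_v$ alone, and this is what lets it pass from $v$ to its children in $A$. The argument is deterministic and holds for every tree $T$, not merely almost surely under $\cT$.
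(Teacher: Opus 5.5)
Your proof is correct and is essentially the paper's argument. The paper truncates the infinite $D$-ary subtree at depth $h$ to get a finite complete $D$-ary subtree rooted at $v$, then applies the definition of $h$-good from its leaves (at depth $h$) up to $v$; your backward induction on depth, carried out over all good vertices instead of inside one fixed subtree, is the same induction.
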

\begin{proof}
If $v$ is good then $T_v$ has an infinite $D$-ary subtree rooted at~$v$. Every vertex in this subtree is good. Let $h'\leq h$ be the depth of~$v$ in $T$.
Let $\ell = h-h'$. Then $T$ contains a complete $D$-ary subtree~$\tree$ of depth~$\ell$ rooted at~$v$. By Definition~\ref{def:good-bad-vertices}, the leaves of~$\tree$ are $h$-good, and so are their parents. 
Applying the definition from the leaves of $\tree$ up to~$v$, all nodes in $\tree$ (including $v$) are $h$-good.
 \end{proof}

\begin{lemma}\label{lem:domTvl}
Let~$h,\ell\geq 1$ be integers and  $T\sim \cT$. For every $v\in V(T^h)$, $T^h_{v,\ell}$ is a subtree of $T_{v,\ell}$.
\end{lemma}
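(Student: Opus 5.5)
We show that every vertex of $T^h_{v,\ell}$ is a vertex of $T_{v,\ell}$. Both are subtrees of $T_v$ rooted at $v$ and closed under taking ancestors within $T_v$, so vertex containment gives subtree containment: the edges are induced by parent links. The idea is that $T^h_{v,\ell}$ is cut off by counting $h$-good vertices, while $T_{v,\ell}$ is cut off by counting good vertices. By Lemma~\ref{lem:goodvarious}, good implies $h$-good for vertices at depth at most $h$. Hence along any path inside $T^h$, the $h$-good count is at least the good count, and the $h$-good truncation stops no later than the good truncation.

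Concretely, fix $v\in V(T^h)$ and let $u\in V(T^h_{v,\ell})$. By Definition~\ref{def:level}, $u$ is a descendant of $v$ in $T^h$, hence at depth at most $h$. Moreover, the path $v=w_0,w_1,\dots,w_k=u$ has at most $\ell$ vertices among $w_1,\dots,w_k$ that are $h$-good. Every $w_i$ lies in $T^h$, so Lemma~\ref{lem:goodvarious} applies to each of them. Therefore
\[
\#\{i\ge1: w_i \text{ good}\}\le \#\{i\ge1: w_i\ h\text{-good}\}\le \ell .
\]
Since the $v$-to-$u$ path in $T^h$ is the same as the $v$-to-$u$ path in $T$, the path from $v$ to $u$ in $T_v$ contains at most $\ell$ good vertices, excluding $v$.

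It remains to check that $u$ lies in $T_{v,\ell}$ as given by Definition~\ref{def:inf-Tvl}. That tree is the maximal subtree of $T_v$ in which every root-to-leaf path has at most $\ell$ good vertices. Equivalently, it is the set of $u\in T_v$ whose path from $v$ contains at most $\ell$ good non-root vertices. This set is ancestor-closed, because the count is monotone along a path, so it forms a subtree containing $v$. The only mild point of care is making this reformulation of ``maximal subtree'' precise. A subtree of $T_v$ rooted at $v$ whose root-to-leaf paths each carry at most $\ell$ good vertices has every vertex on such a path, so every vertex satisfies the count condition. Conversely, the set of all vertices satisfying the count condition is itself such a subtree, so it is the maximal one. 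With this in hand, $u\in T_{v,\ell}$, and the lemma follows.
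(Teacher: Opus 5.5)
Your proof is correct and is the paper's argument: every vertex on the $v$-to-$u$ path lies at depth at most $h$, so by Lemma~\ref{lem:goodvarious} each good vertex on it is $h$-good, which leaves at most $\ell$ good non-root vertices and hence $u\in T_{v,\ell}$. The paper simply invokes Definition~\ref{def:inf-Tvl} for the last step. For the inclusion you only need that the ancestor-closed set of vertices meeting the count condition is itself an admissible subtree; your converse claim, that every vertex of an admissible subtree lies on a root-to-leaf path, holds for finite subtrees but can fail for infinite ones, and it is not needed.
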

\begin{proof}
Let $w\in V(T^h_{v,\ell})$. By Definition~\ref{def:level}, the path from
$v$ to $w$, excluding $v$, contains at most $\ell$ vertices that are $h$-good.
By Lemma~\ref{lem:goodvarious}, every good vertex on this path is $h$-good so the path
contains at most $\ell$ good vertices. By Definition~\ref{def:inf-Tvl}, $w\in V(T_{v,\ell})$.
\end{proof}

The trees are easier to work with than the trees~$T^h_{v,\ell}$ in Lemma~\ref{lem:degree-sum} but they are still quite challenging to analyse, since determining whether a vertex of~$\cT$  is good (Definition~\ref{def:good-inf}) requires exposing the whole tree. To make things more straightforward, we define a new process $\cT^{\down}$ (Definition~\ref{def:two-type-equivalent}) that generates an infinite rooted tree whose vertices are \emph{down-good} and \emph{down-bad}; we then show (Lemma~\ref{lem:gb}) that the new tree has the same distribution as~$\cT$ and
even (Lemma~\ref{lem:downgood}) that every down-good vertex is good, thus, we will be able to work with the new process $\cT^{\down}$ (or actually, with another ``downward'' process that dominates it). 

\begin{definition}\label{def:two-type-equivalent}
Let $\Downg$  be the distribution of  $X\sim\Poisson(dp_\good)$.
Let $\Downgbig$ be the distribution of $X$ conditioned on $X\geq D$ and let $\Downgsmall$ be $\Downg$ conditioned on $X<D$. Let $\Downb$ be the distribution $\Poisson(dp_\bad)$.  The rooted random tree $\cT^{\down}$ is defined as follows.

\begin{itemize}
\item The root is set to be \emph{down-good} with probability $p_\good$ and \emph{down-bad} otherwise (with probability $p_\bad$).

\item From every down-good vertex, the number of down-good children is sampled independently from $\Downgbig$ and the number of down-bad children is sampled independently from $\Downb$.

\item From every down-bad vertex, the number of down-good children is sampled independently from $\Downgsmall$ and the number of down-bad children is sampled independently from $\Downb$.
\end{itemize}

\end{definition}

Lemma~\ref{lem:gb} shows that the vertices of $\cT^{\down}$ are in one-to-one correspondence with the vertices of~$\cT$.

\begin{lemma}\label{lem:gb}
In $\cT^{\down}$, every node~$v$ has probability $p_\good$ of being down-good and its number of children is a Poisson random variable with parameter~$d$.
\end{lemma}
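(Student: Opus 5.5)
We prove both claims by induction on depth. The two facts we need are Poisson thinning/superposition and the fixed-point equation $p_\good=\Pr(\Poisson(dp_\good)\ge D)$ from Definition~\ref{def:pgood}. Write $X_g$ for the number of down-good children of a vertex and $X_b$ for its number of down-bad children. In $\cT^\down$, $X_b\sim\Poisson(dp_\bad)$ independently of $X_g$, whatever the type of the parent.

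\emph{Step 1 (type probabilities).} The root is down-good with probability $p_\good$ by definition. Consider a child of a vertex $u$, and suppose inductively that $u$ is down-good with probability $p_\good$. We compute the law of $X_g$ obtained by mixing over the type of $u$. For $k\ge D$,
\[
\Pr(X_g=k)=p_\good\cdot\frac{\Pr(\Poisson(dp_\good)=k)}{\Pr(\Poisson(dp_\good)\ge D)}=\Pr(\Poisson(dp_\good)=k),
\]
where the last equality is the fixed-point equation. For $k<D$, the analogous computation uses $p_\bad=\Pr(\Poisson(dp_\good)<D)$. Hence, unconditionally, $X_g\sim\Poisson(dp_\good)$ and $X_b\sim\Poisson(dp_\bad)$ independently. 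The pair $(X_g,X_b)$ is therefore distributed as a $\Poisson(d)$ number of children, each independently labelled good with probability $p_\good$.

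It follows that a child of $u$, chosen as a size-biased or uniformly random child of a random vertex, is down-good with probability $p_\good$. The statement about \emph{every node} needs care, because a node is specified by its position in the tree, and that position depends on the counts. The clean way to phrase this is as a statement about the marked Galton--Watson tree: we show by induction on generations that the type of each vertex, given the structure above it, has the same law as the type marks of independently thinned Poisson offspring.

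\emph{Step 2 (offspring count).} From Step 1, the total number of children of a vertex whose own type has been averaged out is $X_g+X_b\sim\Poisson(d)$, by superposition of independent Poissons. The subtle point, and the main obstacle, is conditioning. Given the type of $v$, the number of children of $v$ is not $\Poisson(d)$. The lemma's claim, however, is about the marginal law at $v$, which is exactly what the mixture computation provides.

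To obtain full equality in distribution with $\cT$, which is what the paper later uses, we would strengthen the induction. The claim would be that the first $n$ generations of $\cT^\down$ with types forgotten have the law of a $\Poisson(d)$ Galton--Watson tree. Given the unlabelled tree, the types should be determined in the same way as goodness is in $\cT$, namely bottom-up via the fixed-point relation.

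For the stated lemma, though, the marginal computation suffices. Fix the ancestral path to $v$. Integrating out the type of the parent of $v$, using the induction hypothesis that this type is good with probability $p_\good$, gives both that $v$ is down-good with probability $p_\good$ and that $v$ has a $\Poisson(d)$ number of children. The main obstacle is making precise which "node" is meant, so that the conditioning on the tree shape needed to locate $v$ does not bias its type.
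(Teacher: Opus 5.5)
Your proof is correct and matches the paper's argument. Both go by induction on depth: mix $\Downgbig$ and $\Downgsmall$ with weights $p_\good$ and $p_\bad$, use the fixed-point equation to recover $\Poisson(dp_\good)$ down-good children, then superpose the independent $\Poisson(dp_\bad)$ down-bad children to get $\Poisson(d)$ children, each independently down-good with probability $p_\good$.

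The conditioning issue you flag, which the paper passes over, is already settled by your thinning representation. With children ordered uniformly at random, the types of a vertex's children are i.i.d.\ given its number of children. By induction over generations, the types in each generation are then i.i.d.\ with parameter $p_\good$ given the shape of the tree so far, so locating $v$ through the shape does not bias its type.
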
 
\begin{proof}
 The proof is by induction on the depth of~$v$.
For the base case, the root has depth~$0$ and it is clear by construction that it has probability $p_\good$ of being down-good. 

Consider a node $v$ that has probability $p_\good$ of being down-good. We now consider its descendants. With probability $p_\good$, it is down-good and the number of down-good children is chosen from $\Downgbig$
whereas with probability $1-p_\good$ it is down-bad and the number of down-good children is chosen from $\Downgsmall$.
Recall from the definition of~$p_\good$ (Definition~\ref{def:pgood}) that $p_\good$ is
the probability that an output of $\Downg$ is at least~$D$. Thus, the number of its down-good children has distribution $\Downg$, as
\begin{align*}
    p_\good \cdot \Pr(\Poisson(dp_\good)=\cdot \mid \Poisson(dp_\good)\ge D) &+ p_{\bad}\cdot \Pr(\Poisson(dp_\good)=\cdot \mid \Poisson(dp_\good)< D)\\
&= \Pr(\Poisson(dp_\good)=\cdot ).
\end{align*}
Since the number of its down-bad children has distribution $\Downb$, its total number of children is a Poisson random variable with parameter~$d$. Also, the probability that any child is down-good is $p_\good$.
\end{proof}

\begin{lemma}
\label{lem:downgood}
Consider $T\sim \cT$. Using the correspondence in Lemma~\ref{lem:gb}, $T\sim \cT^{\down}$.
If a vertex $v\in V(T)$ is down-good then it is good.
\end{lemma}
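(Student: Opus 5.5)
The statement has two parts, and I would handle them in turn.

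\textbf{The tree is distributed as $\cT$.} This is essentially Lemma~\ref{lem:gb}, strengthened to a statement about the whole tree rather than single vertices. I would argue by induction on depth that the $\cT^{\down}$ tree, with the labels forgotten, is a Galton--Watson tree with $\Poisson(d)$ offspring.

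The key point is independence. Conditional on the type of a vertex $v$, the numbers of down-good and down-bad children are independent of everything generated earlier. Moreover, the unconditional law of the type of $v$ is $p_\good$ whatever the ancestry, and this should be shown by the same computation as in Lemma~\ref{lem:gb}.

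There is one subtlety here. The offspring count of a vertex is not independent of its parent's type, since a good parent has at least $D$ good children. However, each individual child's type is drawn afresh: the number of good grandchildren through that child is $\Downgbig$ or $\Downgsmall$ according to the child's type. Mixing over the child's type gives $\Poisson(dp_\good)+\Poisson(dp_\bad)=\Poisson(d)$, independently across children given the parent.

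So I would state the induction hypothesis in the following form. Conditional on the first $k$ levels, with their labels, the vertices at level $k$ have independent types. Each type is good with probability $p_\good$, with the caveat that the numbers of good children are conditioned on the parent's type. Then I would verify that forgetting labels yields i.i.d.\ $\Poisson(d)$ offspring counts. Poisson thinning gives the needed product structure: the good and bad child counts are independent Poissons.

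\textbf{Every down-good vertex is good.} This part is direct and is the substance of the claim. I would construct an infinite $D$-ary subtree of $T_v$ rooted at $v$ greedily. If $v$ is down-good, its number of down-good children is drawn from $\Downgbig$, so it has at least $D$ down-good children; pick any $D$ of them. Each of these is again down-good, so the step can be repeated.

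By induction on depth, this defines a complete $D$-ary tree of every finite depth, consisting of down-good vertices and nested across depths. Their union is an infinite $D$-ary subtree of $T_v$ rooted at $v$. Since the tree $T_v$ in $\cT$ is the same tree under the correspondence, $v$ is good by Definition~\ref{def:good-inf}.

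\textbf{Main obstacle.} I expect the only delicate step to be making the distributional identification precise, because types are correlated along edges. If needed, I would sidestep this by building the coupling in the reverse direction. Start from $T\sim\cT$, label each vertex good or bad, and check that the resulting two-type law matches Definition~\ref{def:two-type-equivalent}. That check uses the fixed-point equation for $p_\good$ and the fact that the subtrees of distinct children are independent copies of $\cT$.
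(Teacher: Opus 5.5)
Your argument for the substantive claim is the paper's own: a down-good vertex draws its number of down-good children from $\Downgbig$, so it has at least $D$ of them, and iterating yields an infinite $D$-ary subtree of $T_v$, so $v$ is good. The paper does not re-derive the identification $T\sim\cT$ inside this proof; it simply invokes Lemma~\ref{lem:gb}. If you do write that identification out, condition only on the \emph{unlabelled} first $k$ levels: given the parent's type, the children's types are not i.i.d.\ Bernoulli$(p_\good)$, so ``independently across children given the parent'' is false as written. Alternatively, use your reverse coupling, but that needs Lemma~\ref{lem:pgood-exists} (that $\Pr(v\text{ good})$ is the \emph{largest} fixed point), not just the fixed-point equation.
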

\begin{proof}
If $v$ is down-good then it has at least~$D$ down-good children, and all of its descendants have a least~$D$ down-good children, so $T_v$ has an infinite $D$-ary subtree rooted at~$v$. Hence, $v$ is good (Definition~\ref{def:good-inf}).
\end{proof}

\begin{definition}\label{def:down-Tvl}
Let $\ell$ be a positive integer and
let $T\sim \cT^{\down}$.  For every vertex $v\in V(T)$, 
let $T^{\down}_{v,\ell}$ be the maximal subtree of~$T_v$ in which each root-to-leaf path contains at most $\ell$ down-good vertices, excluding the root~$v$.
\end{definition}

Corollary~\ref{cor:downgood} follows immediately from Lemma~\ref{lem:downgood}.
\begin{corollary}
\label{cor:downgood}   
Let $\ell$ be a positive integer and
consider $T\sim \cT$. Using the correspondence in Lemma~\ref{lem:gb}, $T\sim \cT^{\down}$. For every vertex $v\in V(T)$,  $T_{v,\ell}$ from Definition~\ref{def:inf-Tvl}
is a subtree of $T^{\down}_{v,\ell}$.

\end{corollary}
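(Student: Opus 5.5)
The claim is that for every $v$, $T_{v,\ell}$ (Definition~\ref{def:inf-Tvl}) is a subtree of $T^{\down}_{v,\ell}$ (Definition~\ref{def:down-Tvl}). I would argue directly from the two definitions, in the same way as Lemma~\ref{lem:domTvl}. Both trees are subtrees of the same $T_v$, because Lemma~\ref{lem:gb} identifies the vertices of $T\sim\cT$ with those of $\cT^{\down}$. Each tree is the maximal subtree of $T_v$ subject to a path-counting constraint, so it consists of exactly those $w\in V(T_v)$ such that the path from $v$ to $w$, excluding $v$, contains at most $\ell$ vertices of the relevant type. This set is closed under taking ancestors within $T_v$, so it induces a subtree rooted at $v$.

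Now take $w\in V(T_{v,\ell})$. The path from $v$ to $w$ (excluding $v$) then contains at most $\ell$ good vertices. By Lemma~\ref{lem:downgood}, every down-good vertex on this path is good. So the set of down-good vertices on the path is contained in the set of good vertices on the path, and there are at most $\ell$ of them. Hence $w\in V(T^{\down}_{v,\ell})$. Since both trees are the subgraphs induced on their vertex sets, we get $T_{v,\ell}\subseteq T^{\down}_{v,\ell}$ as trees.

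The only delicate point is the coupling. The identification $T\sim\cT^{\down}$ from Lemma~\ref{lem:gb} has to be realised on the same tree, and the down-good labels must be assigned so that Lemma~\ref{lem:downgood} applies vertexwise. Lemma~\ref{lem:downgood} already asserts both of these, so no further work is needed; the corollary is a monotonicity statement about the path-count definition.
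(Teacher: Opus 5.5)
Your proof is correct and matches the paper's, which says only that the corollary follows immediately from Lemma~\ref{lem:downgood}. Your vertexwise argument (down-good $\subseteq$ good, so a path with at most $\ell$ good vertices has at most $\ell$ down-good vertices) is exactly that deduction, written out in parallel with the proof of Lemma~\ref{lem:domTvl}.
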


The tree $T^{\down}_{v,\ell}$ is a much easier object in order to work with (in the context of proving Lemma~\ref{lem:degree-sum}). To bound $|V(T_{v,\ell}^{\down})|$ 
we introduce one final domination.  The idea is to \textit{compress} the down-bad vertices: given a vertex $v$, if vertex $w$ is a down-good descendant of $v$ such that   all vertices on the path from $v$ to $w$ (excluding the endpoints) are down-bad, then we say that $w$ is an \textit{indirect (down-good) offspring} of $v$.
The direct offspring of $v$ are simply its down-good children. We now define a process $\cT^{\dom}$ that dominates the number of direct and indirect offspring  in $\cT^{\down}$.

Before defining $\cT^{\dom}$, we give Lemma~\ref{lem:pbad-small}, which gives a useful lower bound for $p_\good$. It implies that the Galton--Watson branching process in which the number of descendants is a Poisson random variable with parameter $d p_\bad$ is subcritical.

\begin{lemma}\label{lem:pbad-small}
    Let $\epsilon \in (0,1)$. Then for all sufficiently large $d>1$,  $p_\good \geq 1 - \emm^{-d\epsilon^2/4}$.
\end{lemma}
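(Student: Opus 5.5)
We need to prove Lemma~\ref{lem:pbad-small}: for fixed $\epsilon\in(0,1)$ and all sufficiently large $d$, the largest fixed point $p_\good$ of $F(p)\coloneqq\Pr(\Poisson(dp)\ge D)$, with $D=\lceil(1-\epsilon)d\rceil$, satisfies $p_\good\ge 1-\emm^{-d\epsilon^2/4}$.

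The plan is a direct fixed-point argument. The map $F$ is continuous and nondecreasing on $[0,1]$, since Poisson tails are monotone in the mean, and $F(1)\le 1$. The idea is to exhibit a point $p_0\in(0,1)$ with $F(p_0)\ge p_0$. Then $F(p)-p$ is $\ge 0$ at $p_0$ and $\le 0$ at $1$, so by the intermediate value theorem there is a fixed point in $[p_0,1]$. Since $p_\good$ is the largest fixed point (well defined by Lemma~\ref{lem:pgood-exists}), this gives $p_\good\ge p_0$.

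We take $p_0\coloneqq 1-\emm^{-d\epsilon^2/4}$ and must check that $\Pr(\Poisson(dp_0)<D)\le \emm^{-d\epsilon^2/4}$. Write $\lambda=dp_0$. For large $d$ we have $\lambda\ge d(1-\epsilon/10)$, while $D\le (1-\epsilon)d+1$. Hence $D\le \lambda(1-t)$ with $t$ roughly $\epsilon-\epsilon/10-1/d$, so in particular $t\ge 0.85\epsilon$ for large $d$.

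The lower-tail Chernoff bound for the Poisson distribution gives $\Pr(\Poisson(\lambda)\le(1-t)\lambda)\le \exp(-\lambda t^2/2)$. Plugging in, this is at most
\[
\exp\bigl(-\tfrac{1}{2}(1-\epsilon/10)(0.85)^2\epsilon^2 d\bigr).
\]
The exponent coefficient is about $0.36\,\epsilon^2\cdot(1-\epsilon/10)\ge 0.32\,\epsilon^2>\epsilon^2/4$. This yields $1-F(p_0)\le \emm^{-d\epsilon^2/4}=1-p_0$, that is, $F(p_0)\ge p_0$ as required.

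The only real work is this constant bookkeeping: absorbing the ceiling in $D$ and the slight loss in $\lambda=dp_0$ by taking $d$ large, so that the Chernoff exponent $t^2/2$ strictly beats $\epsilon^2/4$. There is slack here, since $\epsilon^2/2$ is available against the required $\epsilon^2/4$.

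A further point is that the fixed point found must be positive, so that it is genuinely the ``largest positive'' solution. This holds automatically because it lies in $[p_0,1]$ and $p_0>0$.

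Finally, subcriticality of $\Poisson(dp_\bad)$ follows as an immediate consequence. Indeed, $dp_\bad\le d\emm^{-d\epsilon^2/4}<1$ for large $d$.
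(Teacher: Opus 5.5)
Your proof is correct and follows the paper's argument. Both apply the intermediate value theorem to $x\mapsto \Pr(\Poisson(dx)\ge D)-x$ on $[p_0,1]$ with $p_0=1-\emm^{-d\epsilon^2/4}$, and check $\Pr(\Poisson(dp_0)\ge D)\ge p_0$ with a Poisson lower-tail Chernoff bound.

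Your bookkeeping is tighter than the paper's. You handle the ceiling in $D$, and you use $p_0\ge 1-\epsilon/10$ so that the valid exponent $\lambda t^2/2$ beats $d\epsilon^2/4$. The paper instead uses only $p_0\ge 1-\epsilon/2$ together with a tail exponent of the form $a^2/(\lambda+a)$, which for small $\epsilon$ is stronger than what the Chernoff bound actually gives.
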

\begin{proof} 
    Fix $\epsilon \in (0, 1)$ and assume $d$ is large enough 
    so that $\exp\left(-\frac{d\epsilon^2}{4}\right) \leq \frac\epsilon 2$.

    Let $f(x) \coloneqq \Pr(\Poisson(dx) \geq D)$ over $x \in [0, 1]$. By Lemma~\ref{lem:pgood-exists}, 
    $p_\good$ is the largest fixed point of $x = f(x)$. Observe that $f(1) < 1$. Then, because $f$ is continuous, if for some $x_0\in[0,1]$, $x_0\leq f(x_0)$ then there must exist solution to $x = f(x)$ in $[x_0,1]$. Since $p_\good$ is the largest solution to $x = f(x)$ in $[0,1]$, it suffices to show
    \begin{equation} \label{eq:pgood_bdd_1}
        x_0 \coloneqq 1 - \emm^{-d\epsilon^2/4} \leq \Pr(\Poisson(dx_0)\geq D) .
    \end{equation}   
    By the choice of $d$, we have $x_0 \geq 1 - \epsilon/2$. By a Chernoff bound, we get
    \[
    \Pr(\Poisson(d x_0) \geq D) \geq 1 - \exp\left(-\frac{(d x_0 - d(1-\epsilon))^2}{2 d x_0 - d(1-\epsilon)}\right) \geq 1 - \exp\left\{-\frac{(d\epsilon/2)^2}{d(2 - \epsilon - (1-\epsilon))}
    \right\}.
    \]
    This is at least $1 - \emm^{-d\epsilon^2/4}$, which concludes the proof of \eqref{eq:pgood_bdd_1}.
\end{proof}

\begin{definition}\label{def:compressed}
Let \(\cS_\bad\) be the distribution of the size of a (subcritical)  
    \(\Poisson(dp_\bad)\) Galton--Watson branching process.
Let $\cT^\dom$ be the Galton--Watson branching process with  offspring distribution \(\bm X\), where 
\[
    \bm X \coloneqq D + X_\good + D\sum_{i=1}^{X_\bad} S_i,
    \]
where $X_\good\sim\Poisson(dp_\good)$, $X_\bad\sim\Poisson(dp_\bad)$, and
$\{S_i\}_{i\geq 1}$ are independent, with $S_i\sim \cS_\bad$ for each $i$.
For every non-negative integer~$\ell$, let 
$\cT^\dom_\ell$ be the subtree of~$\cT^{\dom}$ containing all nodes of depth at most~$\ell$. 
\end{definition}

The process~$\cT^{\dom}$ can be used to dominate 
the number of vertices in $T^{\down}_{v,\ell}$,  as shown in  
Lemmas~\ref{lem:good-dominated} and~\ref{lem:bad-dominated}.

\begin{lemma}\label{lem:good-dominated}
Let $d$ be a sufficiently large real and let  $T\sim \cT^{\down}$. For any $v\in V(T)$ and $\ell > 0$, the number of down-good vertices in $\cT^{\down}_{v,\ell}$ is stochastically dominated by the number of vertices in $\cT^\dom_\ell$.
\end{lemma}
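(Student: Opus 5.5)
We construct an explicit coupling in which the down-good vertices of $T^{\down}_{v,\ell}$ form (a subtree of) a ``compressed'' tree whose offspring counts are dominated by independent copies of $\bm X$. The compressed tree has vertex set the down-good vertices of $T^{\down}_{v,\ell}$, together with $v$ itself if $v$ is down-bad (a harmless extra root, which only helps the domination). Its parent relation is ``indirect or direct offspring'': the parent of a down-good vertex $w\neq v$ is the nearest down-good proper ancestor of $w$ in $T_v$, or $v$ if none exists. A down-good vertex at compressed depth $k$ has exactly $k$ down-good vertices on its path from $v$, excluding $v$. Hence the down-good vertices of $T^{\down}_{v,\ell}$ are exactly the compressed-tree vertices at depth between $1$ and $\ell$, plus possibly $v$. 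It therefore suffices to couple the compressed tree, truncated at depth $\ell$, as a subtree of $\cT^\dom_\ell$.

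The key step is to bound the compressed offspring of a single vertex $u$. Exploring $T_u$ downward in $\cT^{\down}$, the direct offspring of $u$ are its down-good children. If $u$ is down-good, their number is $\Downgbig$-distributed. If $u$ is down-bad, the number is $\Downgsmall$-distributed and hence less than $D$.

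The indirect offspring arise as follows. The down-bad children of $u$ number $X_\bad\sim\Poisson(dp_\bad)$. Each one roots a ``bad cluster'', namely the down-bad vertices reachable through down-bad paths. By Definition~\ref{def:two-type-equivalent}, every down-bad vertex has $\Downb$ down-bad children independently, so each cluster is a $\Poisson(dp_\bad)$ Galton--Watson tree of size $S_i\sim\cS_\bad$; this is subcritical by Lemma~\ref{lem:pbad-small}. Each cluster vertex contributes fewer than $D$ down-good children, because it draws from $\Downgsmall$. The indirect offspring from cluster $i$ therefore number at most $D S_i$.

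For the direct offspring, I would dominate $\Downgbig$ by $D+X_\good$ with $X_\good\sim\Poisson(dp_\good)$, using the fact that a Poisson variable conditioned to be at least $D$ is dominated by $D$ plus an independent copy. To justify this, note that $(X-D\mid X\ge D)$ is stochastically dominated by $X$, since the conditional tail $P(X\ge D+k\mid X\ge D)$ is at most $P(X\ge k)$ by log-concavity of the Poisson law. The down-bad case is covered by the bound $<D$. So the number of compressed offspring is dominated by $D+X_\good+D\sum_{i\le X_\bad}S_i=\bm X$.

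Finally, I would argue independence. The offspring of distinct compressed vertices are determined by disjoint parts of the tree: for each down-good vertex, its children and the bad clusters hanging below it, stopping at the next down-good vertices. Conditional on the types, these parts are independent. The offspring law depends on the history only through the type of $u$, and both types are dominated by $\bm X$. A standard inductive level-by-level coupling of Galton--Watson processes then embeds the compressed tree in $\cT^\dom$, giving the required domination at every depth up to $\ell$.

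The main obstacle is making this coupling rigorous: checking that the exploration's conditioning does not break independence, and that $\Downgbig\preceq D+\Poisson(dp_\good)$ holds.
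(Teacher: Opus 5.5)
Your proposal is correct and follows essentially the same route as the paper. Both compress the down-bad clusters, bound each vertex's direct down-good offspring by $D+X_\good$ and its indirect offspring by $D\sum_{i\le X_\bad}S_i$, and so dominate the compressed offspring by $\bm X$. There are two minor differences. First, you justify $\Pr(\bm g\ge D+k\mid \bm g\ge D)\le \Pr(X_\good\ge k)$ through log-concavity of the Poisson law, which makes $k\mapsto\Pr(X\ge k)$ submultiplicative; the paper instead uses a Poisson-process arrival-time argument. Second, you spell out the independence and Galton--Watson embedding step, which the paper leaves implicit.
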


\begin{proof}
Fix $v\in V(T)$. By Definition~\ref{def:two-type-equivalent}, $v$ has $\Poisson(dp_\bad)$ bad children. By Lemma~\ref{lem:pbad-small}, for all $d$ large enough, $d p_\bad = \emm^{-\Omega(d)}$ wich is much smaller than~$1$. As a result, the maximum connected subtrees of bad vertices rooted in the bad children of $v$ are distributed as the subcritical $\Poisson(dp_\bad)$ Galton--Watson process. By Definition~\ref{def:two-type-equivalent}, every bad vertex has at most $D-1$ good offspring, thus, the number of indirect good offspring of $v$ is dominated by  $D\sum_{i=1}^{X_\bad} S_i,$     where $X_\bad$ and $ S_i$ are as in Definition~\ref{def:compressed}. Then it suffices to show that $D + X_\good \sim D + \Poisson(dp_\good)$ dominates the number of direct good offspring of $v$, which is $\Poisson(dp_\good)$ conditioned on the outcome being at least~$D$.

    If $v$ is bad, then the number of good offspring of $v$ in $T$ does not exceed $D$, giving us the domination. If $v$ is good, it is enough to show that for $\bm g\sim \Poisson(dp_\good)$, $X_\good\sim\Poisson(dp_\good)$ and integer $x\geq 0$ it holds that 
    \begin{equation}\label{eq:yh57h5g}
    \Pr(\bm g \geq x \mid \bm g \geq D) \leq \Pr(X_\good + D \geq x).
    \end{equation}
    Indeed, the result is trivial when $D\geq x$, so assume $D<x$. Let $X\sim\mathrm{Poi}(\lambda)$ where $\lambda=dp_{\mathrm{good}}$ and consider a Poisson process $N(t)$ with rate $\lambda$. Then $X$ has the same distribution as $N(1)$. For integer $m\geq 0$, let $T_m$ be the time of the $m$-th arrival. Then $\Pr[X\geq x]=\Pr[T_x\leq 1]$. But $T_x$ has the same distribution as $T_{x-D}+T_D'$ (where $T_D'$ is an independent copy of $T_D$) and
    \[\Pr[T_{x-D}+T_D'\leq 1]\leq \Pr[T_{x-D}, T_D'\leq 1]=\Pr[T_{x-D}\leq 1]\, \Pr[T_D'\leq 1]=\Pr[X\geq x-D]\,\Pr[X\geq D],\]
    proving \eqref{eq:yh57h5g}.
\end{proof}

We also get the following domination result for the number of down-bad vertices.

\begin{lemma}\label{lem:bad-dominated}
Let $d>1$ be a sufficiently large real and let  $T\sim \cT^{\down}$. For any $v\in V(T)$ and $\ell > 0$, the number of down-bad vertices in $\cT^{\down}_{v,\ell}$ is stochastically dominated by the number of vertices in $\cT^\dom_{\ell+1}$.
\end{lemma}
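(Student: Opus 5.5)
The plan is to charge every down-bad vertex of $T^{\down}_{v,\ell}$ to a down-good vertex (or the root $v$) and reuse the domination already established in Lemma~\ref{lem:good-dominated}.

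\emph{Charging down-bad vertices.} Let $w$ be a down-bad vertex of $T^{\down}_{v,\ell}$ with $w\neq v$. Walk up from $w$ towards $v$ and stop at the first vertex that is either down-good or equal to $v$; call it $a(w)$. Then $w$ lies in a maximal connected cluster of down-bad vertices that hangs below $a(w)$, and this cluster is rooted at a down-bad child of $a(w)$. Since $w\in T^{\down}_{v,\ell}$, the path from $v$ to $a(w)$ contains at most $\ell$ down-good vertices. Hence $a(w)$ is either $v$ or a down-good vertex of $T^{\down}_{v,\ell}$.

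\emph{Sizes of the bad clusters.} By Definition~\ref{def:two-type-equivalent}, each vertex (good or bad) has $\Poisson(dp_\bad)$ down-bad children. The down-bad cluster rooted at each such child is a $\Poisson(dp_\bad)$ Galton--Watson tree, with size distributed as $\cS_\bad$. These clusters are independent of how the down-good structure of $T^{\down}_{v,\ell}$ is generated, except through the counts of good children produced from bad vertices. So the number of down-bad vertices is at most
\[
\sum_{a}\sum_{i=1}^{X_\bad^{(a)}} S_i^{(a)},
\]
where $a$ ranges over $v$ and the down-good vertices of $T^{\down}_{v,\ell}$.

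\emph{Comparison with $\cT^\dom$.} In $\cT^\dom$, each node's offspring count $\bm X$ contains the term $D\sum_{i=1}^{X_\bad}S_i\ge \sum_{i=1}^{X_\bad}S_i$. Coupling as in the proof of Lemma~\ref{lem:good-dominated}, I will map $v$ to the root of $\cT^\dom$ and each down-good vertex of $T^{\down}_{v,\ell}$ to a node of $\cT^\dom_\ell$. The bad clusters below a node $a$ at depth $k\le \ell$ of $\cT^\dom$ are then injected into the offspring of $a$, which sit at depth $\le \ell+1$. The coupling uses the same $X_\bad$ and $S_i$ variables both for the bad-cluster sizes and for the corresponding summand of $\bm X$. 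Consequently the number of down-bad vertices is at most the number of depth-$\le\ell+1$ nodes of $\cT^\dom$, which is the claimed domination by $|\cT^\dom_{\ell+1}|$.

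\emph{Main obstacle.} The delicate step is making the coupling explicit. The same bad-cluster variables are used in Lemma~\ref{lem:good-dominated} to account for indirect good offspring, and here they must simultaneously account for the bad vertices themselves. I would handle this by building a single joint coupling of $T^{\down}_{v,\ell}$ with $\cT^\dom$ level by level in the compressed (good-only) tree, exploring each bad cluster once. The factor $D$ in front of $\sum S_i$ then provides room to embed both the indirect good offspring (at most $D-1$ per bad vertex) and the bad vertex itself (one more). That slack is exactly $D\cdot S_i\ge (D-1+1)S_i$, so the joint coupling still fits inside $\bm X$.
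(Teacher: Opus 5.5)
Your proposal is correct and is essentially the paper's argument: compress each maximal down-bad cluster onto its nearest down-good ancestor (or $v$), use that these clusters have i.i.d.\ $\cS_\bad$-distributed sizes, and absorb them into the $D\sum_i S_i\ge\sum_i S_i$ offspring of the corresponding node of $\cT^\dom$ one level deeper, hence within depth $\ell+1$. Your joint-coupling remark (the per-bad-vertex slack $D\ge (D-1)+1$) makes explicit what the paper leaves implicit, and the one case you skip, a down-bad root $v$, is covered by the root of $\cT^\dom_{\ell+1}$, which your bad clusters never occupy.
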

\begin{proof}
Consider any vertex $w\in V(T)$ and the number of  maximal connected subtrees consisting of down-bad vertices rooted in (down-bad) children of $w$. The size of each such subtree is distributed as the size of subcritical $\Poisson(dp_\bad)$ Galton--Watson process -- $\cS_\bad$. Since there are $X_\bad\sim\Poisson(dp_\bad)$ bad children, the number of bad vertices that would be ``compressed'' onto $w$ is distributed is $\sum_{i=1}^{X_\bad} S_i$, where $S_i$ are i.i.d. from $\cS_\bad$.

In the compressed tree $T^\dom$ we get $D \geq 1$ offsprings for each of the bad vertices, thus the number of bad vertices in $T_{v,\ell}^{\down}$ is dominated by the number of  vertices in $\cT^\dom_{\ell+1}$, where the extra level is to account for the fact that $T^{\down}_{v,\ell}$ also includes the down-bad vertices adjacent to the $\ell$-th layer of down-good vertices.
\end{proof}

Next we prove that, taking $\ell =  \Theta(\log \log n)$ 
the size of $\cT^{\dom}_\ell$ is 
at most a polynomial in $\log n$ with probability at least~$1/\poly(n)$. To do this, we first note the probability generating function of the offspring distribution of $\cT^{\dom}$ and we then use tail bounds.

\begin{lemma}\label{lem:pgf-of-X}
Let $d>1$ be a sufficiently large real so that $dp_\bad < 1$.
    Then, the probability generating function $f(t) \coloneqq \E[t^{\bm X}]$ of \(\bm X\), the offspring distribution on $\cT^\dom$, is well defined for \(0 \leq t \leq (\frac{\emm^{dp_\bad}-1}{dp_\bad})^{1/D}\), 
    and for such $t$ we have 
    \[
    f(t) = g(t^D) \emm^{dp_\good(t-1)},
    \]
    where $g(t) \coloneqq \E[t^{\cS_\bad}]$ is the least non-negative solution $z_0$ of $z = t \emm^{dp_\bad(z-1)}$, 
    which is well-defined
    for $|t|\leq \tfrac{\emm^{dp_\bad - 1}}{dp_\bad}$.
\end{lemma}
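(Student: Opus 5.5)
We prove the two claims separately, starting with the total-progeny generating function $g$ and then combining it with the independent pieces of $\bm X$.

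\textbf{The function $g$.} By Lemma~\ref{lem:pbad-small}, $dp_\bad<1$, so the $\Poisson(dp_\bad)$ Galton--Watson process is subcritical and $\cS_\bad$ is almost surely finite. Its offspring generating function is $\phi(z)=\emm^{dp_\bad(z-1)}$. Conditioning on the number of children of the root gives the standard total-progeny identity
\[
g(t)=t\,\phi(g(t))=t\,\emm^{dp_\bad(g(t)-1)}.
\]
This holds wherever $g(t)=\E[t^{\cS_\bad}]$ converges. We identify $g(t)$ as the least non-negative solution $z_0$ of $z=t\emm^{dp_\bad(z-1)}$ by the usual monotone iteration. Let $g_k(t)$ be the generating function of the progeny truncated at generation $k$. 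Then $g_0=t$ and $g_{k+1}=t\phi(g_k)$. Each $g_k(t)$ increases in $k$ to $g(t)$ by monotone convergence, and by induction $g_k\le z$ for every non-negative fixed point $z$. Hence $g(t)\le z_0$, and $g(t)$ is itself a fixed point, so $g(t)=z_0$ whenever a finite fixed point exists.

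Next we find the range of $t$ for which a fixed point exists. The map $z\mapsto z\emm^{-dp_\bad(z-1)}$ increases on $[0,1/(dp_\bad)]$, and its maximum there is $\frac{\emm^{dp_\bad-1}}{dp_\bad}$, attained at $z=1/(dp_\bad)$. So for $0\le t\le \frac{\emm^{dp_\bad-1}}{dp_\bad}$ there is a fixed point $z\le 1/(dp_\bad)$, and the least one is finite. For complex $|t|$ in this range, absolute convergence follows by comparison with $g(|t|)$.

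\textbf{The function $f$.} By Definition~\ref{def:compressed}, $\bm X=D+X_\good+D\sum_{i=1}^{X_\bad}S_i$, and the three components are independent. So
\[
\E[t^{\bm X}]=t^D\,\E[t^{X_\good}]\,\E\big[t^{D\sum_{i\le X_\bad}S_i}\big].
\]
The middle factor is $\emm^{dp_\good(t-1)}$. For the last factor, we condition on $X_\bad$ and use that the $S_i$ are i.i.d.\ with generating function $g$; this gives a compound Poisson expression:
\[
\E\big[(t^D)^{\sum_{i\le X_\bad}S_i}\big]=\E\big[g(t^D)^{X_\bad}\big]=\emm^{dp_\bad(g(t^D)-1)}.
\]
Now apply the fixed-point identity at the argument $s=t^D$, namely $s\,\emm^{dp_\bad(g(s)-1)}=g(s)$. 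Then $t^D\emm^{dp_\bad(g(t^D)-1)}=g(t^D)$, and hence $f(t)=g(t^D)\emm^{dp_\good(t-1)}$, as claimed.

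\textbf{Range of validity.} We need $t^D$ to lie in the domain of $g$, i.e.\ $t^D\le \frac{\emm^{dp_\bad-1}}{dp_\bad}$. The stated bound $t\le\big(\frac{\emm^{dp_\bad}-1}{dp_\bad}\big)^{1/D}$ appears to be a typo for this exponent. If it is meant literally, note that $\frac{\emm^{x}-1}{x}\to 1$ as $x\to 0$, while $\frac{\emm^{x-1}}{x}\to\infty$. Since $dp_\bad$ is small, the stated range then lies inside the needed domain anyway, so both readings work.

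The main obstacle is the careful identification of $g$ with the \emph{least} root and the convergence at the boundary of the range. This is the monotone-iteration argument given under the first paragraph; everything else is routine independence bookkeeping.
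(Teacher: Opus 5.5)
\paragraph{Overall.} Your derivation of $f(t)=g(t^D)\emm^{dp_\good(t-1)}$ is the same as the paper's. It uses independence of $X_\good$, $X_\bad$ and the $S_i$, the compound-Poisson step $\E[g(t^D)^{X_\bad}]=\emm^{dp_\bad(g(t^D)-1)}$, and the fixed-point identity at $s=t^D$. You are also right that the range should read $(\frac{\emm^{dp_\bad-1}}{dp_\bad})^{1/D}$, which is what the paper's own proof uses. Under the literal reading, your containment argument needs $\emm^{x}-1\le \emm^{x-1}$ for $x=dp_\bad$, i.e.\ $x\le 1-\log(\emm-1)\approx 0.46$. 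Lemma~\ref{lem:pbad-small} gives this for large $d$, but $dp_\bad<1$ alone does not.

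\paragraph{The gap.} The paper does not prove that $g$ is the least root; it cites the standard Borel total-progeny fact. Your self-contained proof of that fact fails for $0<t<1$. Your iterates $g_k(t)=\E[t^{Z_{\le k}}]$, started at $g_0=t$, increase in $k$ only when $t\ge 1$. For $0<t<1$, the function $z\mapsto t\emm^{dp_\bad(z-1)}-z$ is positive at $0$ and equals $t-1<0$ at $1$. So the least root satisfies $z_0<1$, and hence $z_0=t\emm^{dp_\bad(z_0-1)}<t=g_0$. The base case of your induction $g_k\le z$ therefore fails; in fact $g_k$ decreases to $z_0$.

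\paragraph{The repair.} Start the iteration at $0$ instead. Put $\tilde g_k(t)=\E[t^{\cS_\bad}\mathbf 1\{\text{height}<k\}]$, where height is that of the $\Poisson(dp_\bad)$ Galton--Watson tree. Then $\tilde g_0=0$ and $\tilde g_{k+1}=t\emm^{dp_\bad(\tilde g_k-1)}$. For every $t\ge 0$, monotone convergence gives $\tilde g_k\uparrow g(t)$. Since $z\mapsto t\emm^{dp_\bad(z-1)}$ is non-decreasing on $[0,\infty)$, induction gives $\tilde g_k\le z$ for every non-negative fixed point $z$. Alternatively, for $t\le 1$ you can note that the root in $[0,1]$ is unique by convexity and that $g(t)\in[0,1]$.

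With this change your proof is complete. The identity for $f$ itself only uses the functional equation for $g$ and finiteness of $g(t^D)$, so the error does not affect it.
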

\begin{proof}
    It is a standard result (see for example \cite[Theorem~3.16]{vdHofstad2017}) that the distribution $S_\bad$  is the Borel distribution; its probability generating function $g(t)$ is defined for $|t|\leq \tfrac{\emm^{dp_\bad - 1}}{dp_\bad}$ as the least non-negative solution to $z = t\emm^{dp_\bad(z - 1)}$. 
    Thus, when  $0\leq t\leq (\tfrac{\emm^{dp_\bad - 1}}{dp_\bad})^{1/D}$, using the definition of $\bm X$ and that the probability generating function of a $\Poisson(\lambda)$ random variable is $\emm^{\lambda(t-1)}$, we get that
    \begin{align*}
    \E[t^{\bm X}] &= \E[t^{D+X_\good + D\sum_{i=1}^{X_\bad} \sum_i S_i}] = t^D \emm^{dp_\good(t - 1)} \E\left[\E\left[\prod_i^{X_\bad} (t^{D})^{S_i}\mid X_\bad\right]\right]  \\
    &= t^{D}\emm^{dp_\good(t-1)} \E[g(t^D)^{X_\bad}] = t^{D}\emm^{dp_\good(t-1) + dp_\bad(g(t^D) - 1)}.
    \end{align*}
    By noting $\emm^{dp_\bad(g(t^D) - 1)} = g(t^D)/t^D$, we get that $
    f(t) = g(t^D) \emm^{dp_\good(t-1)}$.
\end{proof}

Next, we get the probability generating function for $|\cT^\dom_\ell|$, the size of the branching process with offspring distribution \(\bm X\) after $\ell$ generations.

\begin{lemma}\label{lem:process-size-pgf}
    For $\ell \geq 0$, let $Z_\ell \coloneqq |\cT^\dom_\ell|$ be the total number of vertices in the branching process after $\ell$ generations. Then, \(Z_\ell\) has a probability generating function, defined recursively by
    \[
    f_\ell(t) \coloneqq \E[t^{Z_\ell}] = \begin{cases}
        t & \text{if } \ell = 0 \\
        t f(f_{\ell-1}(t)) & \text{otherwise}
    \end{cases}
    \]
    The function \(f_0\) is defined for all \(t\geq 0\), and \(f_\ell\) is defined for all \(t\geq 0\) small enough so that \(f_{\ell-1}(t)\leq (\frac{\emm^{dp_\bad-1}}{dp_\bad})^{1/D}\).
\end{lemma}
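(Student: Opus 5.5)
We argue by induction on $\ell$, using the standard recursive structure of a Galton--Watson tree together with the branching property. For $\ell=0$ the tree $\cT^\dom_0$ consists of the root only, so $Z_0=1$ deterministically and $\E[t^{Z_0}]=t$ for every $t\ge 0$.

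For the inductive step, condition on the number $\bm X$ of children of the root. By the definition of $\cT^\dom$ as a Galton--Watson process with offspring law $\bm X$, the subtrees rooted at the children are i.i.d.\ copies of $\cT^\dom$, independent of $\bm X$. Truncating at depth $\ell$ from the root corresponds to truncating each child subtree at depth $\ell-1$. Hence
\[
Z_\ell \overset{d}{=} 1+\sum_{i=1}^{\bm X} Z^{(i)}_{\ell-1},
\]
where the $Z^{(i)}_{\ell-1}$ are i.i.d.\ copies of $Z_{\ell-1}$, independent of $\bm X$.

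For $t\ge 0$ all quantities are nonnegative, so Tonelli's theorem justifies conditioning even when values may be $+\infty$. This gives
\[
\E[t^{Z_\ell}]=t\,\E\Big[\E\big[\textstyle\prod_{i=1}^{\bm X} t^{Z^{(i)}_{\ell-1}}\,\big|\,\bm X\big]\Big]=t\,\E\big[f_{\ell-1}(t)^{\bm X}\big]=t\,f\big(f_{\ell-1}(t)\big).
\]

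It remains to check that these expressions are well defined, which is the only mildly delicate point. By induction, $f_{\ell-1}(t)$ is finite and nonnegative on the stated range. Lemma~\ref{lem:pgf-of-X} shows that $f(s)=\E[s^{\bm X}]$ is finite whenever $0\le s\le (\frac{\emm^{dp_\bad}-1}{dp_\bad})^{1/D}$. So whenever $f_{\ell-1}(t)$ lies in this interval, $f(f_{\ell-1}(t))$ is finite and equals $\E[f_{\ell-1}(t)^{\bm X}]$, and therefore $f_\ell(t)$ is finite.

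The domain of $f_\ell$ is nonempty and contains a neighbourhood of $0$, since $f_{\ell-1}(0)=0$ and $f_{\ell-1}$ is continuous and nondecreasing where it is finite. The same monotonicity shows that the admissible set of $t$ is an initial interval. This completes the induction.
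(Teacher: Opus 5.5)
Your proof is correct and follows the paper's argument: induction on $\ell$, the root decomposition $Z_\ell = 1+\sum_{i=1}^{\bm X} Z^{(i)}_{\ell-1}$ with i.i.d.\ subtree sizes independent of $\bm X$, and conditioning on $\bm X$ to get $t\,\E[f_{\ell-1}(t)^{\bm X}]=t\,f(f_{\ell-1}(t))$; your Tonelli and monotonicity/continuity remarks just handle well-definedness more carefully than the paper does. One small caveat: the threshold $\bigl(\frac{\emm^{dp_{\mathrm{bad}}}-1}{dp_{\mathrm{bad}}}\bigr)^{1/D}$ you quote is a typo in the statement of Lemma~\ref{lem:pgf-of-X}, whereas the actual radius of convergence of the Borel generating function $g$ (used in that lemma's proof and in the target statement) is $\frac{\emm^{dp_{\mathrm{bad}}-1}}{dp_{\mathrm{bad}}}$, which is much larger when $dp_{\mathrm{bad}}$ is small, so you should cite that value to get the full stated range (also needed later in Lemma~\ref{lem:tgood_pgf_bdd}).
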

\begin{proof}
    We prove this by induction. For $\ell = 0$, the tree $\cT_0^\dom$ consists solely of the root, thus $Z_0 = 1$, giving $f_0(t) = t$. For $\ell > 0$, let $\bm X_0$ be the number of children of the root, and let $T_1,\dots, T_{\bm X_0}$ be independent copies of $\cT^\dom_{\ell-1}$ -- the subtrees of the children of the root. By conditioning on $\bm X_0$ we get
    \begin{align*}
        f_\ell(t) = \E\left[\E\left[t^{1 + \sum_{i=0}^{\bm X_0} |T_i|} \mid \bm X_0\right]\right] = t\E[f_{\ell-1}(t)^{\bm X_0}] = tf(f_{\ell-1}(t)).
    \end{align*}
    Since $\bm{X} > 0$ and $f(t)=\E[t^{\bm{X}}]$, $f(t)$ is an increasing function in $t$. 
    The function $f_\ell(t)$ is defined for $t\geq 0$ where $f(f_{\ell-1}(t))$ is defined, which is for $t$ small enough  satisfying $|f_{\ell-1}(t)|\leq(\frac{\emm^{dp_\bad-1}}{dp_\bad})^{1/D}$.
\end{proof}

Now we use the probability generating function to prove a tail bound for the size of $Z_{\ell}$.
\begin{lemma} \label{lem:tgood_pgf_bdd}
    For all sufficiently large \(d\), for \(\ell \geq 0\) and all \(\theta \leq1\), \(f_\ell(\emm^{\theta / (6d)^\ell})\) is defined and moreover \(f_\ell(\emm^{\theta / (6d)^\ell}) \leq \emm^{\theta}\).
\end{lemma}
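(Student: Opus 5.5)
We argue by induction on $\ell$, proving simultaneously that $f_\ell(\emm^{\theta/(6d)^\ell})$ is defined and that it is at most $\emm^{\theta}$, for all $\theta\le 1$. The base case $\ell=0$ is immediate, since $f_0(\emm^\theta)=\emm^\theta$.

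For the inductive step, set $t=\emm^{\theta/(6d)^\ell}$ and $\theta'=\theta/(6d)$. Then $t=\emm^{\theta'/(6d)^{\ell-1}}$ with $\theta'\le 1/(6d)\le 1$, so the inductive hypothesis gives that $f_{\ell-1}(t)$ is defined and $f_{\ell-1}(t)\le \emm^{\theta'}=\emm^{\theta/(6d)}$. Because $f$ is increasing (Lemma~\ref{lem:process-size-pgf}),
\[
f_\ell(t)=t\,f(f_{\ell-1}(t))\le \emm^{\theta/(6d)^\ell}\, f\big(\emm^{\theta/(6d)}\big),
\]
as long as $f$ is defined at $\emm^{\theta/(6d)}$. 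It therefore suffices to prove the one-step estimate
\[
f(\emm^{s})\le \emm^{5ds}\qquad\text{for all } 0\le s\le \tfrac{1}{6d}.
\]
Given this with $s=\theta/(6d)$, we get $f_\ell(t)\le \emm^{\theta/(6d)^\ell+5\theta/6}\le \emm^{\theta}$, since $(6d)^{-\ell}\le 1/6$ for $\ell\ge1$. The case $\theta\le0$ is trivial, because every pgf is at most $1$ there.

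To prove the one-step estimate we use Lemma~\ref{lem:pgf-of-X}: $f(\emm^s)=g(\emm^{Ds})\,\emm^{dp_\good(\emm^s-1)}$. For $s\le 1/(6d)$ we have $\emm^s-1\le s(1+s)$, so the Poisson factor is at most $\emm^{1.01ds}$. For the Borel factor, $Ds\le (1-\epsilon)d/(6d)+1/(6d)\le 1/5$, so $u\coloneqq \emm^{Ds}\le \emm^{1/5}$. Since $dp_\bad=\emm^{-\Omega(d)}$ by Lemma~\ref{lem:pbad-small}, the threshold $\emm^{dp_\bad-1}/(dp_\bad)$ is enormous and $g(u)$ is well defined. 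This also settles definedness of $f$ at $\emm^{\theta/(6d)}$.

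The remaining step is to bound $g(u)$ near $u$. We solve $z=u\,\emm^{dp_\bad(z-1)}$ and check that $z=2u$ satisfies $2u\ge u\,\emm^{dp_\bad(2u-1)}$, which holds because $dp_\bad(2u-1)\ll \log 2$. Since the map $z\mapsto u\,\emm^{dp_\bad(z-1)}$ is convex and starts above the diagonal, this places the least fixed point below $2u$. The sharper claim, $g(u)\le u\,\emm^{2dp_\bad u}$, follows by plugging the bound back into the fixed-point equation. It gives
\[
g(\emm^{Ds})\le \emm^{Ds+O(dp_\bad)}\le \emm^{ds+O(dp_\bad)}.
\]

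The main obstacle is the $O(dp_\bad)$ additive error, which does not vanish as $s\to0$. The fix is to expand in $s$ rather than crudely: $g(u)-u\le u(\emm^{dp_\bad(g(u)-1)}-1)$, and with $g(u)\le 2u$ and $u-1\le 2Ds$ this is $O(dp_\bad\cdot Ds)+u\,(\emm^{dp_\bad(u-1)}-1)$, so it is of order $dp_\bad\, Ds\le ds$. Hence $g(\emm^{Ds})\le \emm^{Ds+O(ds)}\le \emm^{3ds}$, and together with the Poisson factor the total exponent is at most $4.01ds\le 5ds$. This completes the induction.
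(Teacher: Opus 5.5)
Your skeleton matches the paper's: induction with $\theta'=\theta/(6d)$, monotonicity of $f$, the factorisation $f(\emm^s)=g(\emm^{Ds})\,\emm^{dp_\good(\emm^s-1)}$, an easy bound on the Poisson factor, and a fixed-point comparison for the Borel generating function $g$. The gap is in the only nontrivial estimate, the Borel factor. Write $\lambda=dp_\bad$ and $u=\emm^{Ds}$. You claim that $g(u)\le 2u$ and $u-1\le 2Ds$ give $u(\emm^{\lambda(g(u)-1)}-1)\le O(\lambda Ds)+u(\emm^{\lambda(u-1)}-1)$. They do not. The difference is $u\,\emm^{\lambda(u-1)}(\emm^{\lambda(g(u)-u)}-1)$, and $g(u)\le 2u$ only gives $g(u)-u\le u=O(1)$. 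So this term is $O(\lambda)$, which is exactly the non-vanishing error you set out to remove. Getting $O(\lambda Ds)$ presupposes $g(u)-u=O(u-1)$, i.e.\ the conclusion itself.

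You can rescue this by absorption. With $y=g(u)-u\ge 0$, the fixed-point equation and $\emm^a-1\le a\emm^a$ give $y\le 2\lambda(y+u-1)$, hence $y\le u-1$ and $g(u)\le 2u-1\le u^2$. The cleaner fix, and what the paper does, is to run your own fixed-point comparison with the test point $u^2=\emm^{2z}$, $z=Ds\in[0,1]$, in place of $2u$. The inequality $\emm^{2z}\ge \emm^{z}\emm^{\lambda(\emm^{2z}-1)}$ is equivalent to $z\ge\lambda(\emm^{2z}-1)$. This holds because $\emm^{2z}-1\le(\emm^2-1)z$ on $[0,1]$ and $\lambda\le 1/(\emm^2-1)$ for large $d$. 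It gives $g(\emm^{Ds})\le\emm^{2Ds}\le\emm^{2ds}$, and with your Poisson bound the one-step estimate $f(\emm^s)\le\emm^{5ds}$ follows.

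Separately, your remark that the case $\theta\le 0$ is trivial is wrong. A pgf being at most $1$ gives $f_\ell\le 1$, not $f_\ell\le\emm^\theta<1$. In fact the inequality fails for negative $\theta$. By Jensen, $f_1(\emm^{-1/(6d)})\ge\emm^{-(1+\E\bm X)/(6d)}$, and $\E\bm X\le 2d+1$ for large $d$, so this is at least $\emm^{-1/3-1/(3d)}>\emm^{-1}$.

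The lemma should therefore be read for $\theta\in[0,1]$. That is all the paper's proof actually covers: its claim $g(\emm^z)\le\emm^{2z}$ is only proved, and only true, for $z\ge 0$. It is also all that is used, namely $\theta=1$ in Lemma~\ref{lem:Tvl-polylog}. On that range your induction is consistent, since $\theta'=\theta/(6d)\in[0,1]$.
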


\begin{proof}
    We proceed by induction on \(\ell\). For \(\ell = 0\), \(f_0(t)\) is defined for all \(t\), and in particular \(f_0(\emm^{\theta/1})=\emm^\theta\) for all \(\theta\leq 1\). Assume that the lemma is true for \(\ell\geq 0\) and fix any $\theta \leq 1$. Using Lemma~\ref{lem:process-size-pgf},
\[
    f_{\ell+1}(\emm^{\theta/(6d)^{\ell+1}}) = \emm^{\theta /(6d)^{\ell+1}} f(f_\ell(\emm^{\theta/(6d)^{\ell+1}})).
    \]

    First we show that this is defined for all $ 0 \leq \theta \leq 1$. 
    To see this by Lemma~\ref{lem:process-size-pgf}, we need $f_\ell(\emm^{\theta/(6d)^{\ell+1}}) \leq(\frac{\emm^{dp_\bad-1}}{dp_\bad})^{1/D}$.
    Note that \(\frac{\theta}{(6d)^{\ell+1}} = \frac{\theta/(6d)}{(6d)^\ell}\) and \(\theta/(6d) < \theta\leq 1\). Thus, by induction hypothesis on \(f_\ell\) with \(\theta' = \theta/(6d)\), 
    it suffices to show 
    $\emm^{\theta'} \le (\frac{\emm^{dp_\bad-1}}{dp_\bad})^{1/D}$.
    Then using \(D \leq d\) and \(\theta \leq 1\), we get that
    \(
    \emm^{D\theta'} =
    \emm^{\theta D/(6d)} \leq \emm^{1/6} <1 \). 
    Thus, $f_{\ell+1}(\emm^{\theta/(6d)^{\ell+1}})$ is defined.
    
    Next, by noting that \(f\) is an increasing function, using that $f_l(\theta'/(6d)^l) \le \emm^{\theta'}$, 
 we obtain

    \begin{equation}
        \label{eq:induction-bound-on-fl+1}
    f_{\ell+1}(\emm^{\theta/(6d)^{\ell+1}})
    \leq \emm^{\theta /(6d)^{\ell+1}} f(\emm^{\theta'} )
    \leq\emm^{\theta
    /(6d)^{\ell+1}+dp_\good (\emm^{\theta/(6d)} - 1)} g(\emm^{\theta D/(6d)}) .
    \end{equation}

    Assume from now $d$ is sufficiently large. In the next step, we use the following claim.

{\bf Claim: }  
        For all $z\leq 1$, $g(\emm^z)\leq \emm^{2z}$.
        
    Assuming the claim, we get that 
    \(g(\emm^{\theta D/(6d)})\leq \emm^{\theta D/(3d)}\leq \emm^{\theta / 3}\). Plugging back into~\eqref{eq:induction-bound-on-fl+1}, we obtain that
    \begin{align*}
        f_{\ell+1}(\emm^{\theta/(6d)^{\ell+1}})\leq& \emm^{\theta (6d)^{-\ell - 1}+d(\emm^{\theta/(6d)} - 1) + \theta /3} \\
        \leq & \exp\{\theta[1/(6d)^{(\ell+1)} + d(\emm^{\theta/(6d)}-1)/\theta + 1/3]\}.
    \end{align*}
     Finally, using \(D\leq d\) and that for \(x\in(0,1)\) \(\emm^{x}-1\leq 2x\), we conclude
    \[
    f_{\ell+1}(\emm^{\theta/(6d)^{\ell+1}})\leq \exp\{\theta[1/(6d)+1/3+1/3]\} \leq \emm^{\theta}.
    \]
{\bf Proof of the Claim: } 
    Recall that $g(t)$ is the probability generating function of the total
progeny of a subcritical $\operatorname{Poi}(\lambda)$ Galton--Watson process where $\lambda =d p_{\mathrm{bad}}$, and hence
is the smallest positive solution of $    g(t)=t\emm^{\lambda(g(t)-1)}$. Let $t=\emm^z$ and $x=\emm^{2z}$, so that our goal is to show that $g(t)\leq x$, or equivalently that $x\ge t\emm^{\lambda(x-1)}$.
Substituting $t=\emm^z$ and $x=\emm^{2z}$, this becomes $z\ge \lambda(e^{2z}-1)$.

For $0\le z\le1$ we have $\emm^{2z}-1\leq (\emm^2-1)z$.  Therefore it suffices to have $\lambda\leq (\emm^2-1)^{-1}$.  Since
$\lambda=d p_{\rm bad}=\emm^{-\Omega(d)}$, this holds for all sufficiently
large $d$. This ends the proof of the claim. 
\end{proof}

\begin{lemma}
\label{lem:Tvl-polylog} 
Fix arbitrary  reals $\kappa,C_0>0$. Then, for all large enough~$n$ and 
$\ell = \lfloor C_0\log\log n\rfloor $, for $T\sim \cT^{\down}$ and every $v\in V(T)$, it holds that
$\Pr\big( |T^{\down}_{v,\ell}| \leq (6d)^\ell (\log n)^2\big) \geq 1-1/n^{\kappa+1}$. 
\end{lemma}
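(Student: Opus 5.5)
The plan is to split $|T^{\down}_{v,\ell}|$ into its down-good and down-bad vertices and control each part by a Chernoff-type bound built from the generating function in Lemma~\ref{lem:tgood_pgf_bdd}.

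By Lemmas~\ref{lem:good-dominated} and~\ref{lem:bad-dominated}, $|T^{\down}_{v,\ell}|$ is stochastically dominated by $Z_\ell + Z'_{\ell+1}$, where $Z_\ell=|\cT^\dom_\ell|$ and $Z'_{\ell+1}$ is distributed as $|\cT^\dom_{\ell+1}|$. The two are possibly dependent, so a union bound is needed. It therefore suffices to show that each of the events
\[
Z_\ell > \tfrac12 (6d)^\ell(\log n)^2 \quad\text{and}\quad Z_{\ell+1} > \tfrac12(6d)^\ell(\log n)^2
\]
has probability at most $\frac{1}{2}n^{-\kappa-1}$. Here $Z_{\ell+1}$ is handled by the same argument with $\ell+1$ in place of $\ell$.

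For the tail bound, I apply Markov's inequality to $t^{Z_m}$ with $t=\emm^{1/(6d)^m}$, that is, Lemma~\ref{lem:tgood_pgf_bdd} with $\theta=1$. This gives
\[
\Pr(Z_m> x)\le \frac{\E[\emm^{Z_m/(6d)^m}]}{\emm^{x/(6d)^m}}=\frac{f_m(\emm^{1/(6d)^m})}{\emm^{x/(6d)^m}}\le \exp\bigl(1-x/(6d)^m\bigr).
\]
For $m=\ell$ and $x=\frac12(6d)^\ell(\log n)^2$, the bound is $\exp(1-\frac12(\log n)^2)$. This is far smaller than $n^{-\kappa-1}/2$ once $n$ is large.

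The step needing care is $m=\ell+1$. There the threshold divided by $(6d)^{\ell+1}$ is only $(\log n)^2/(12d)$. Since $d$ is a fixed constant, this is still $\omega(\log n)$, so the tail bound $\exp(1-(\log n)^2/(12d))$ is again $n^{-\omega(1)}$. Summing the two failure probabilities then gives the claim for all large enough $n$. The choice $\ell=\lfloor C_0\log\log n\rfloor$ plays no role beyond $\ell\ge0$, which is what Lemma~\ref{lem:tgood_pgf_bdd} requires.

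The only genuine subtlety is justifying that the good-part and bad-part dominations can be combined. The union bound over the two separate dominations handles this without any joint coupling, so I do not expect a real obstacle.
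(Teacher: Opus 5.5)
Your proposal is correct and follows the paper's proof: a union bound over the down-good and down-bad parts via Lemmas~\ref{lem:good-dominated} and~\ref{lem:bad-dominated}, then Markov's inequality on $\emm^{Z_m/(6d)^m}$ using Lemma~\ref{lem:tgood_pgf_bdd} with $\theta=1$. The only cosmetic difference is that the paper uses the monotonicity $Z_\ell\le Z_{\ell+1}$ to reduce to a single tail bound for $Z_{\ell+1}$, whereas you bound the two tails separately.
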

\begin{proof}   
By Lemma~\ref{lem:good-dominated}
the number of down-good vertices in $T^{\down}_{v,\ell}$ is stochastically dominated by the number of vertices in $\cT^{\dom}_\ell$ 
and by Lemma~\ref{lem:bad-dominated}, 
the number of down-bad vertices in $\cT^{\down}_{v,\ell}$ is stochastically dominated by the number of vertices in $\cT^{\dom}_{\ell+1}$.
From Lemma~\ref{lem:process-size-pgf}, $Z_\ell = |\cT^{\dom}_\ell|$.
Hence, by a union bound and the monotonicity $Z_\ell\le Z_{\ell+1}$, it is enough to show that
$\Pr\left(Z_{\ell+1}\ge \frac12(6d)^\ell(\log n)^2\right)
   \leq \frac{1}{2n^{\kappa+1}}$. By Markov's inequality,
\[  
\Pr(Z_{\ell+1}  \geq \tfrac{1}{2}(6d)^\ell (\log n)^2) = \Pr(\emm^{Z_{\ell+1} / (6d)^{\ell+1}}\geq \emm^{(\log n)^2/(12d)})\leq \frac{\E[\emm^{Z_{\ell+1}/(6d)^{\ell+1}}]}{n^{(\log n) / (12d)}} \leq \emm\cdot n^{-\tfrac{\log n}{12d}},
\]  
where the last inequality follows from Lemma~\ref{lem:tgood_pgf_bdd} (with $\theta = 1$). For large enough $n$, the probability bound is less than $1/(2n^{\kappa+1})$ as needed.
\end{proof}

Finally, we conclude the section by proving Lemma~\ref{lem:degree-sum}, which we restate for convenience. 
\begin{lemdegreesum}
\statelemdegreesum
\end{lemdegreesum}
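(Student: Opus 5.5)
Part (i) will come from the comparison chain already established in this section, and part (ii) from a separate tail bound on degrees, finished by a union bound over the at most polynomially many vertices of $T^h$. We work with the infinite tree $T\sim\cT$ and its truncation $T^h$. By Lemma~\ref{lem:domTvl} and Corollary~\ref{cor:downgood}, for every $v\in V(T^h)$ we have $T^h_{v,\ell}\subseteq T_{v,\ell}\subseteq T^{\down}_{v,\ell}$. Hence $|V(T^h_{v,\ell})|\le N$ holds whenever $|T^{\down}_{v,\ell}|\le N$. The structure of $T_v$ in $\cT^{\down}$ is a fresh copy of the two-type process started from the type of $v$. Lemma~\ref{lem:Tvl-polylog} (applied with $\kappa+2K+1$ in place of $\kappa$) then gives failure probability at most $n^{-\kappa-2K-2}$ per vertex.

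For the union bound over vertices, we argue as in the proof of Theorem~\ref{thm:main1}. Since $\E|V(T^h)|=O(d^h)=O(n^{K})$, Markov's inequality shows $|V(T^h)|\le n^{2K+\kappa+1}$ except with probability $n^{-\kappa-1}$. To avoid conditioning on the random vertex set, we bound the expected number of bad vertices: $\E\sum_{v\in V(T^h)}\mathbf 1\{\text{$\cE_v$ fails}\}=\sum_{j\le h}\E[\#\text{vertices at depth }j]\cdot\Pr(\text{fail at a fixed vertex})$. This equality holds because the subtree below a depth-$j$ vertex is independent of the number of such vertices. The bound is $O(n^K)\cdot n^{-\kappa-K-1}$, so Markov's inequality finishes the argument. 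This expectation trick handles (i) and (ii) simultaneously. Since the per-vertex failure probability must be taken uniformly over the depth of $v$, I would phrase everything in terms of the subtree rooted at $v$.

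For (ii), the degree of $u\in T^h$ is at most $1+(\text{number of children of }u)$, and the child counts are $\Poisson(d)$. The subset $S$ ranges over sets of size $s\le\log_2(2N)=O(\log\log n)$ (recall $\ell=O(\log\log n)$ and $N=(\log n)^{O(1)}$) inside a tree with at most $N$ vertices. I would bound the number of such $S$ by $N^{s}=\exp(O((\log\log n)^2))=n^{o(1)}$. For a fixed set of $s$ vertices, the sum of their child counts is stochastically at most $\Poisson(sd)$, and $\Pr(\Poisson(sd)\ge x)\le (esd/x)^x$. With $x=C_1\log n/\log\log n$ and $sd=O(\log\log n)$, this probability is $\exp(-(1-o(1))C_1\log n)=n^{-(1-o(1))C_1}$, which beats $n^{o(1)}\cdot n^{-\kappa-2K-2}$ once $C_1$ is large.

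The main obstacle is that $S$ is a subset of a random tree, so the degrees of its vertices are not independent of the choice of $S$. Moreover, the block $T^h_{v,\ell}$ is determined by good/bad types, which depend on the degrees. I would avoid this by exploring $T_v$ in BFS order, which gives an ordered sequence of vertices, and restricting to the first $N$ vertices explored. On the event from (i), the block is contained in these first $N$ BFS vertices of $T_v$, since the block is a subtree containing the root. The child counts of the first $N$ BFS-explored vertices are i.i.d.\ $\Poisson(d)$ in $\cT$, regardless of types. So it suffices to show that among $N$ i.i.d.\ $\Poisson(d)$ variables, no $s\le\log_2(2N)$ of them sum to more than $W-s$. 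A union bound over $\binom{N}{s}$ index sets together with the Poisson tail above gives this. One subtlety remains: vertices at depth $h$ have only their parent edge in $T^h$, but their degree in $T^h$ is only smaller, so the bound still holds.
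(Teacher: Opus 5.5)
Your handling of part (i) and of the union bound over $V(T^h)$ matches the paper: the chain $T^h_{v,\ell}\subseteq T_{v,\ell}\subseteq T^{\down}_{v,\ell}$, then the per-vertex failure probability summed against $\sum_{j\le h}\E[Z_j]=O(n^K)$. The Poisson tail estimate against $n^{o(1)}$ index sets also matches. \textbf{The gap is the decoupling step in (ii).} It is false that a rooted subtree with at most $N$ vertices lies among the first $N$ vertices of a BFS of $T_v$. A block vertex at depth $k$ comes after every vertex of $T_v$ at depth $<k$, including all those outside the block, and here this genuinely happens. Suppose $v$ has good children and also a child starting a path of $h$-bad vertices of length $L$ a bit larger than $\log_D N$. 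The good children's $D$-ary subtrees are cut out of the block after $\ell$ good vertices, but the BFS still explores them. The bad path lies entirely inside the block, since bad vertices are not counted. Then the end of the path is not among the first $N$ BFS vertices. For $L=\Theta(\log\log n)$ this configuration has per-vertex probability roughly $(dp_\bad)^{L}=(\log n)^{-O(1)}$. That is far larger than the $n^{-\kappa-K-O(1)}$ you need, so it cannot be absorbed into the union bound over the $\Theta(n^K)$ vertices of $T^h$.

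Restricting the BFS to block vertices inside $\cT$ does not help either. Goodness (and $h$-goodness) of a vertex is a function of its whole subtree, so block membership is not adapted to any exploration of $\cT$. Conditioning on the block's shape then biases the child counts: good vertices have at least $D$ good children, and bad ones are biased downward.

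The missing ingredient is to do the exploration in the top-down two-type process $\cT^{\down}$, which you already use for (i). There each vertex's type is sampled when it is born, before its offspring. So $T^{\down}_{v,\ell}$, which contains your block, can be explored by a BFS visiting only its own vertices, because a child's membership depends only on types already revealed along its path. The offspring pairs are then read off in order from two independent i.i.d.\ sequences: law $\Downgbig\times\Downb$ for down-good vertices and $\Downgsmall\times\Downb$ for down-bad ones. Each total is stochastically dominated by $D+\Poisson(d)$, not by $\Poisson(d)$, because good vertices are conditioned to have at least $D$ good children; this is the Poisson-process comparison in Lemma~\ref{lem:good-dominated}. Coupling with $2N$ i.i.d.\ copies $\bm d_i$ of $D+\Poisson(d)$ gives $\Pr(\neg\cE_v)\le\Pr(|T^{\down}_{v,\ell}|>N)+\Pr\bigl(\exists I\subseteq[2N],\ |I|\le\log_2(2N),\ \sum_{i\in I}\bm d_i>C_1\frac{\log n}{\log\log n}-|I|\bigr)$. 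Your tail estimate then goes through unchanged, with $sd$ replaced by $s(D+d)=O(\log\log n)$ and $\binom{2N}{s}=n^{o(1)}$ index sets.
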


\begin{proof}
Fix the constants as in the statement of the lemma and take $C_1$ sufficiently large, to be determined below.
Let $n$ be sufficiently large.  By Lemma~\ref{lem:domTvl}, we have  that $T^{h}_{v,\ell}$ is a subtree of $T_{v,\ell}$ and, by Corollary~\ref{cor:downgood}, $T_{v,\ell}$ is a subtree of $T^{\down}_{v,\ell}$. So, it suffices to lower-bound the probability of the event $\mathcal{E}(T^{\down}_{v,\ell})$ for a fixed $v\in V(\cT^{\down})$. 
 
By applying Lemma~\ref{lem:Tvl-polylog} with $\kappa' > \kappa+K+1$,  for all sufficiently large~$n$, $\Pr(|T^{\down}_{v,\ell}|\leq N)\geq 1 - n^{-(\kappa + 2)} d^{-h}$ for $N=  (6d)^\ell (\log n)^2$, which is at most a polynomial in $\log n$. 
 
Consider generating $\cT_{v,\ell}^{\down}$ as follows:
by a breadth-first-search, with degrees being drawn from two independent sequences of i.i.d random variables pairs: $(\bm g^\good_i, \bm b^\good_i)_{i\geq 1}$ for good vertices 
from the distribution $\Downgbig \times \Downb$ 
as in Definition~\ref{def:two-type-equivalent}
and $(\bm g^\bad_i, \bm b^\bad_i)_{i\geq 1}$
for bad vertices
from the distribution $\Downgsmall \times \Downb$. As we showed in the proof of Lemma~\ref{lem:good-dominated}, both $\bm g^\good_i + \bm b^\good_i$ and $\bm g^\bad_i + \bm b^\bad_i$ are dominated by $\Poisson(d) + D$.

Let $\bm d_1,\dots, \bm d_{2N}$ be an i.i.d. sequence of $\Poisson(d)+D$ random variables such that, for all $i\leq N$, $\bm d_i \geq \bm g^\good_i + \bm b^\good_i$ and, for all $N < i\leq 2N$, $\bm d_i \geq \bm g^\bad_{i-N} + \bm b^\bad_{i-N}$.
Then, conditioned on $|T^{\down}_{v,\ell}|\leq N$, the sequence of degrees in $\cT^{\down}_{v,\ell}$ is contained, in some order, in $(\bm d_i)_{1\leq i\leq 2N}$, as $\cT^{\down}_{v,\ell}$ then contains at most $N$ bad and $N$ good vertices. 

Let $n' \coloneqq \log_2 (2N)$ and $W \coloneqq \frac{\log n}{\log \log n}$.
Since $n$ is sufficiently large,   $N\geq n'$. Therefore, whenever there is a set $S\subseteq V(\cT^{\down}_{v,\ell})$ with $|S|\leq n' $ and $\deg_{T}(S) > C_1W$, there is also a set $I\subseteq [2N]$ of size $n'$ such that $\sum_i \bm d_i > C_1 W$.

Now we can use a union bound to upper bound the probability of this event. For any $S = \{i_1,\dots, i_{n'}\}\subseteq [2N]$, $\bm d_{i_1},\dots, \bm d_{i_{n'}}$ are i.i.d. $D + \Poisson(d)$,  hence their sum is distributed as $D n' + \Poisson(dn')$. Using $\Pr(\Poisson(\lambda)\geq x)\leq (\tfrac{\emm\lambda }{x})^x$, we get
\[\Pr\left(\sum_{i\in S} \bm d_i \geq C_1W\right)\leq \left(\frac{\emm dn'}{C_1 W - Dn'}\right)^{C_1 W-Dn'} = \exp\{-C_1 \Theta(\log n) \},\]
where we used the fact that $-\log \left(\frac{\emm dn'}{C_1 W - Dn'}\right) = \Theta(\log\log n)$. Pick $C_1$ large enough so that for all $n$ large enough the expression is at most $n^{-(\kappa + 2)}d^{-h}$.

Then, by noting that there are at most $\left(\frac{\emm 2N}{n'}\right)^{n'} = \exp\{\poly(\log\log n)\} = n^{o(1)}$ such subsets $S$, we conclude by a union bound that, for every sufficiently large~$n$,
\[
\Pr\big(\cE(T_{v,\ell}^{\down})\, \big| \, |T^{\down}_{v,\ell}|\leq N\big) \geq 1 - \tfrac 12 n^{-(\kappa+1)} d^{-h}.
\]
Finally, use the probability bound that we've already obtained from Lemma~\ref{lem:Tvl-polylog} to remove the conditioning to conclude that
\begin{align*}
\Pr\big(\cE(T^{\down}_{v,\ell})\big) \geq 
\Pr\big(\cE(T^{\down}_{v,\ell})\, \big| \,  |T^{\down}_{v,\ell}|\leq N\big) \Pr\big(|T^{\down}_{v,\ell}|\leq N\big) \geq 1 - n^{-(\kappa + 1)} d^{-h},
\end{align*}
for all sufficiently large~$n$. Combining everything together, we obtain that
\begin{equation} \label{eq:dgs1}
    \sup_{v \in V(T^h)} \Pr\big(\neg \cE_v\big) \leq n^{-(\kappa + 1)} d^{-h}.
\end{equation}

For all $v \in V(T^h)$, let ${\bf 1}_{\neg\cE_v}$ be the indicator r.v. for the event $\neg \cE_v$, and let $B \coloneqq \sum_{v \in V(T^h)} {\bf 1}_{\neg\cE_v}$. For all $0 \leq t \leq h$, let $Z_t$ be the number of vertices at depth $t$ in $T^h$. For a $\Poisson(d)$ Galton--Watson tree, $\E(Z_t) = d^t$. Therefore, in conjunction with (\ref{eq:dgs1}), for all $n$ sufficiently large enough,
\[\E(B) \leq \left(\sup_{v \in V(T^h)} \Pr\big(\neg \cE_v\big)\right) \left(\sum_{t = 0}^h \E(Z_t)\right) \leq \left(\frac{1}{n^{(\kappa + 1)} d^h}\right) \left(\frac{d^{h + 1} - 1}{d - 1}\right) \leq n^{-\kappa}\]

Then $\Pr\Big(\bigcap_{v\in V(T^h)}\cE_v\Big) = 1 - \Pr(B \geq 1) \geq 1 - \E(B) \geq 1 - n^{-\kappa}$, as required.
\end{proof}

\subsection{Sparse Random Graphs: Proof of Lemma~\ref{lem:Poissonneig}}\label{sec:gndnstruc}
\begin{lemPoissonneig}
\statelemPoissonneig
\end{lemPoissonneig}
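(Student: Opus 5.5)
We run the BFS exploration of $G\sim G(n,d/n)$ from $v$ level by level and couple it step by step with the generation of the Poisson tree $T^h$ with parameter $\hat d=d+d^2/n$. When a vertex $u$ of the current frontier is processed, the number of edges from $u$ to the set $U$ of still-unexplored vertices is $\mathrm{Bin}(|U|,d/n)$. Every edge from $u$ to an already-discovered but unprocessed vertex closes a cycle; we call such an edge a \emph{back edge}. We attach to $u$ a $\Poisson(\hat d)$ number of children in $T^h$, coupled so that this number dominates the binomial count of new BFS children. The choice $\hat d=d+d^2/n$ is made so that $\mathrm{Bin}(m,d/n)\preceq \Poisson(\hat d)$ holds for all $m\le n$. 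This follows from $\mathrm{Bin}(1,p)\preceq\Poisson(-\log(1-p))$ and $-n\log(1-d/n)\le d+d^2/n$ for $n$ large. Under this coupling, each BFS child of $u$ is identified with a tree child of $u$, and the surplus tree children spawn independent Poisson subtrees. By construction $T_{v,h}$ is then a subtree of $T^h$, and the only extra vertices are the roots of the surplus subtrees.

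The remaining claim is that, with probability $\ge 1-n^{-7/6}$, few surplus children are attached to vertices of $T_{v,h}$. Deleting the edges from vertices of $T_{v,h}$ to these surplus children disconnects $T_{v,h}$ from the rest of $T^h$, since the surplus subtrees hang off $T_{v,h}$ only through these edges. So we must show that the total surplus is at most $10$.

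First we bound the size. By the tail bound for a Poisson Galton--Watson tree, $|V(T^h)|\le n^{1/5}\log n$ with probability $1-n^{-\omega(1)}$, using that $\mathbb{E}[\text{level } t]=\hat d^t$ and that concentration holds at $h=\lfloor\frac15\log_d n\rfloor$. The same bound then holds for $T_{v,h}$. Next we bound the surplus per processed vertex $u$. It comes from two sources: the binomial deficit $|U|$ versus $n$, and the Poisson-versus-binomial slack. With $n-|U|\le m\coloneqq n^{1/5}\log n$, the probability that $u$ receives any surplus child is $O(d\,m/n+d^2/n)=O(n^{-4/5}\log n)$. Back edges from $u$ similarly number $\mathrm{Bin}(\le m,d/n)$.

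Summing over the at most $m$ processed vertices, the total surplus count $X$ is dominated by a sum of independent indicators, or small Poisson variables, with mean $\lambda=O(m^2/n)=O(n^{-3/5}\log^2 n)$. Hence $\Pr(X\ge 11)\le \lambda^{11}/11! = n^{-33/5+o(1)}\ll n^{-7/6}$. Taking a union over the failure of the size bound gives the claim; in fact the exponent leaves room to spare, so the constant $10$ is generous.

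The main obstacle is organizing the coupling so that the domination is genuinely per-step and independent of the history, despite $|U|$ varying. We handle this by splitting $\Poisson(\hat d)$ as $\Poisson(-n\log(1-d/n))$ plus an independent $\Poisson(\text{tiny})$. The first part is realized as a sum over all $n$ potential vertices of $\Poisson(-\log(1-d/n))$ variables, each dominating the Bernoulli edge indicator. Potential partners that are already discovered contribute the surplus, and they are also where back edges arise. Their number is at most $m$ on the size event, which yields the stated mean bound.
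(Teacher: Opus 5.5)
Your proof is correct and is essentially the paper's: the same coupling by independent $\Poisson(-\log(1-d/n))$ variables per potential edge, topped up to $\Poisson(\hat d)$. The extra points (the paper's $R_i$ and $P_i$) are the only edges to delete, and they are controlled by a size bound on the exploration plus an $O(m/n)$ per-step mean; your single Poisson-type tail for the total surplus just replaces the paper's separate bounds on $\Pr[R\ge 7]$ and $\Pr[P\ge 2]$.

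One caution: the size bound is not a concentration fact, since the normalised level sizes $Z_t/\hat d^{\,t}$ converge to a non-degenerate limit $W$. With your threshold $n^{1/5}\log n$ you need exponential moments of $W$ of every order. This is true for Poisson offspring, via Doob's maximal inequality and $\E e^{\theta W}=\exp(\hat d(\E e^{\theta W/\hat d}-1))$, but it is not what you cite. Alternatively, the paper's threshold $m=n^{2/5}$ needs only one exponential moment and still gives $\lambda=O(n^{-1/5})$ and $\Pr(X\ge 11)=O(n^{-11/5})$.
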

\begin{proof}
Let $p=d/n$ and let $\lambda=-\log(1-p)$, and consider $G\sim G(n,d/n)$ and a vertex $v$ in it.

We expose the neighbourhood of $v$ by breadth-first search.  Let $s_1,s_2,\hdots$ be the vertices of $G$ that are explored. For $i=1,2,\hdots$, let $U_i$ be the set of vertices of $[n]$ which have not yet been discovered at the time when the vertex $s_i$. 
For each $u\in U_i$, we sample $P_{i,u}\sim \mathrm{Poi}(\lambda)$ and place the edge $\{s_i,u\}$ in $G$ if, and only if, $P_{i,u}\ge 1$. Since $\Pr(P_{i,u}\ge 1)=1-\emm^{-\lambda}=p=d/n$, 
this gives exactly the usual  BFS exploration on $G(n,d/n)$.

Let  $M_i \coloneqq \sum_{u\in U_i}P_{i,u}$. Conditional on $U_i$, we have  $M_s\sim \mathrm{Poi}(|U_i|\lambda)$. 
Since
\[
        |U_i|\lambda\le n\lambda=d+\tfrac{d^2}{2n}+O(1/n^2)\leq \hat d
\]
for all large enough $n$, we sample independently $R_i\sim \mathrm{Poi}(\hat d-|U_i|\lambda)$ 
and define $N_i \coloneqq M_i+R_i$.
Then, conditional on $U_i$, we have  $N_i\sim\mathrm{Poi}(\hat d)$. 

We use $N_i$ as the number of children of $s_i$ in the comparison Poisson tree
$T^h$. The BFS children of $s_i$ are embedded as follows. For every $u\in U_i$
with $P_{i,u}\ge 1$, choose one of the $P_{i,u}$ Poisson children and identify
the corresponding child of $s_i$ in $T^h$ with the BFS child $u$. The remaining
$P_{i,u}-1$ points, together with the $R_i$ additional children, are declared
to be the ``extra children'' (these extra children are then followed by
independent PGW$(\hat d)$ subtrees, truncated at the remaining depth).
Doing this recursively for all BFS vertices up to depth $h-1$ gives a coupling
in which $T_{v,h}$ is a rooted subtree of $T^h$. Moreover, the distribution of
$T^h$ is precisely that of a Poisson tree with parameter $\hat d$ truncated at depth $h$,
since every vertex has an independent $\mathrm{Poi}(\hat d)$ number
of children.

It remains to show that the number of extra children (attached directly to
vertices of $T_{v,h}$) is $O(1)$ with the probability $1-n^{-6/5}$. Let $\mathcal{F}_i$ be the BFS discovery tree at the time we started to explore $s_i$. Let $N=|V(T_{v,h-1})|$ be the number of vertices of $G$ that were explored.

For $i=1,2,\hdots$,  let $A_i$ be the
number of extra children created when $s_i$ is explored, i.e., with $P_i \coloneqq \sum_{u\in U_{i}}(P_{i,u}-1)\mathbf{1}\{P_{i,u}\geq 2\}$ we have $A_i \coloneqq R_i+P_i$. In case for some $i$ there does not exist a corresponding vertex $s_i$ (i.e., the BFS discovery stopped at some  vertex $s_j$ with $j<i$), we set $P_{i,u}=0$ for all $u\in U_i$ (so that $R_i=P_i=0$). Let $R=\sum^N_{i=1} R_{i}$, $P=\sum^N_{i=1} P_{i}$, and   $A \coloneqq R+P$.  It remains to bound $\Pr[A\geq 11]$, we  first bound $R$ and then $P$.  

With probability $1-\exp(-n^{\Omega(1)})$, we have $N\leq n^{2/5}$. Conditioned on this, for each $i$ we have $|U_i|\geq n-n^{2/5}$, so each $R_i$ is dominated above by a Poisson r.v. with mean $O(\tfrac{n^{2/5}}{n})$  and hence $R$ is dominated above by a Poisson  with mean $O(n^{2/5}\tfrac{n^{2/5}}{n})=O(n^{-1/5})$. It follows that  $\Pr[R\geq 7]\leq n^{-6/5}$ for large enough $n$. 

To bound $P$, consider a vertex $s_i$ in the discovery process and condition on $\mathcal{F}_i$. Under this conditioning,  for each $u\in U_{i}=V\backslash \mathcal{F}_{i}$, $P_{i,u}$ is distributed as
$\mathrm{Poi}(\lambda)$ and hence 
\[
        \Pr(P_{i,u}-1\ge2\mid \mathcal{F}_i)=O(1/n^3),
        \qquad
        \Pr(P_{i,u}-1\geq1\mid \mathcal{F}_i)=O(1/n^2).
\]
Note that these bounds extend to the case where there is no such vertex $s_i$.
Moreover, conditioned on $U_i$ the variables $\{P_{i,u}\}_{u\in U_i}$ are conditionally independent, so for any $i$ we have $\Pr(P_{i}\ge1\mid \mathcal{F}_i)=O( 1/n)$ and $\Pr(P_{i}\ge2\mid \mathcal{F}_i)=O( 1/n^2)$. Since this holds for any $\mathcal{F}_i$, we conclude that 
\begin{equation}\label{eq:Pi34t344t}
\Pr(P_{i}\ge1)=O( 1/n) \mbox{ and } \Pr(P_{i}\ge2)=O( 1/n^2).
\end{equation}
Similarly for $i<j$ we have that 
\begin{equation}\label{eq:Pi34t344tb}
\Pr( P_i\geq 1, P_j\geq 1)=O(1/n^2).
\end{equation}
Conditioned on the event $\mathcal{E}=\{N\leq n^{2/5}\}$, we  therefore 
have by a union bound that 
\[\Pr[P\geq 2\mid \mathcal{E}]\leq \sum_{1\leq i\leq n^{2/5}}\Pr\big[P_{i}\geq 2\mid \mathcal{E}\big]+\sum_{1\leq i<j\leq n^{2/5}}\Pr\big[P_{i}\geq 1,P_{j}\geq 1\mid \mathcal{E}\big].\]
Since $\Pr(\mathcal{E})=1-\emm^{-n^{\Omega(1)}}$, from \eqref{eq:Pi34t344t} and \eqref{eq:Pi34t344tb}  we have that $\Pr\big[P_{i}\geq 2\mid \mathcal{E}\big]=O(1/n^2)$ and $\Pr\big[P_{i}\geq 1,P_{j}\geq 1\mid \mathcal{E}\big]=O(1/n^2)$ as well, 
so $\Pr[P\geq 2]=O(\tfrac{n^{4/5}}{n^2})$. We therefore obtain that $\Pr[P\geq 2]=O(n^{-6/5})$. 

Combining the bounds for $R$ and $P$, we obtain that $\Pr[A\geq 11]=n^{-7/6}$ as needed.
\end{proof}

\section{Remaining Proofs}
\subsection{Proof for marginal bound}\label{sec:marginal}

In this section, we prove Lemma~\ref{lem:marginal}. Let $T$ be a tree rooted at $r$ and consider  the monochromatic boundary condition on $T$ at some level $h$. For $v\in T^h$, let $\Omega^{\pl}_{v}$ be the set of all colourings on $T_v$ when $v$ gets the colour $\pl$ and  $\Omega^{\mi}_{v}$ a set of all colourings  when $v$ gets some fixed colour $j\neq \pl$. Let $Z^{\pl}_v$ and $Z^{\pl}_v$ be the corresponding partition functions. Note $Z^{\pl}_v$ is the same regardless of the choice of $j\neq \pl$.

We will use the following recursive relation.
\begin{lemma}\label{lem:recurrence}
    For any non-leaf vertex \(v\in V(T)\), we have that
    \[
    \frac{Z^{\mi}_v}{Z^{\pl}_v} = \prod_{u:\, u\prec v} f\Big(\frac{Z^{\mi}_u}{Z^{\pl}_u}\Big) \mbox{ where } f(z) \coloneqq \frac{(q-2+\emm^\beta)z + 1}{(q-1)z+\emm^\beta},
    \]
    and \(u\prec v\) denotes that \(u\) is a child of \(v\).
\end{lemma}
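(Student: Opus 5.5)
The plan is to prove the recurrence by a one-step expansion of the partition function at $v$. The key structural fact is that, once the colour of $v$ is fixed, the subtrees $T_u$ for the children $u\prec v$ interact only through $v$. Hence the partition function factorises over children.

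\emph{The factorisation.} Fix a colour $c\in[q]$ for $v$. A colouring of $T_v$ with $\sigma_v=c$ is the same as an independent choice of colourings of each $T_u$, $u\prec v$, together with the edge weights $\emm^{\beta\mathbf 1\{\sigma_u=c\}}$ on the edges $\{v,u\}$. Write $Z_u^{(a)}$ for the partition function of $T_u$ with $\sigma_u=a$; this includes the monochromatic boundary on the depth-$h$ leaves that lie in $T_u$. Then
\[
Z_v^{(c)}=\prod_{u\prec v}\sum_{a\in[q]} \emm^{\beta\mathbf 1\{a=c\}}Z_u^{(a)}.
\]

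\emph{Colour symmetry.} The boundary is all $\pl$, so every colour $a\neq\pl$ gives the same value, $Z_u^{(a)}=Z_u^{\mi}$. This symmetry comes from permuting the non-$\pl$ colours, which fixes the boundary condition. It is also exactly why $Z^{\mi}_v$ does not depend on the choice of $j\neq\pl$. The two cases $c=\pl$ and $c=j\neq\pl$ then give:
\begin{itemize}
\item For $c=\pl$, the child factor is $\emm^\beta Z_u^{\pl}+(q-1)Z_u^{\mi}$.
\item For $c=j$, the child factor is $Z_u^{\pl}+\emm^\beta Z_u^{\mi}+(q-2)Z_u^{\mi}=Z_u^{\pl}+(q-2+\emm^\beta)Z_u^{\mi}$.
\end{itemize}

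\emph{Taking the ratio.} Dividing the two products term by term, and dividing numerator and denominator of each factor by $Z_u^{\pl}$, gives
\[
\frac{Z^{\mi}_v}{Z^{\pl}_v}=\prod_{u\prec v}\frac{(q-2+\emm^\beta)z_u+1}{(q-1)z_u+\emm^\beta},\qquad z_u=\frac{Z^{\mi}_u}{Z^{\pl}_u},
\]
which is the stated formula with $f$ as given.

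\emph{Edge cases.}
\begin{itemize}
\item A child $u$ at depth $h$ is a boundary leaf. It has $Z_u^{\mi}=0$, so $z_u=0$ and $f(0)=\emm^{-\beta}$. This matches the direct computation: an edge to a fixed $\pl$ leaf has weight $\emm^\beta$ if $\sigma_v=\pl$ and weight $1$ otherwise.
\item A non-boundary leaf $u$ has $Z_u^{\pl}=Z_u^{\mi}=1$, so $z_u=1$ and $f(1)=1$. This is consistent as well.
\item $Z_u^{\pl}>0$ always, so the division is legitimate.
\end{itemize}

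There is no real obstacle here. The only points needing care are stating the colour-permutation symmetry so that $Z^{\mi}$ is well defined, and handling the boundary leaves consistently.
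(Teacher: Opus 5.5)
Your proof is correct and follows essentially the same route as the paper. Both fix the colour of $v$, factorise the partition function over the child subtrees, and use that every non-$\pl$ colour gives the same $Z^{\mi}_u$. This yields the child factors $(q-2+\emm^\beta)Z^{\mi}_u+Z^{\pl}_u$ and $(q-1)Z^{\mi}_u+\emm^\beta Z^{\pl}_u$, and dividing gives the formula. Your explicit justification of the colour-permutation symmetry and your check of boundary versus free leaves ($f(0)=\emm^{-\beta}$, $f(1)=1$) are small but useful additions to the paper's terser argument.
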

\begin{proof}
    Note that once we fix the spin of \(v\), the distributions on its children's subtrees are independent, and the final partition function is a product of their partition functions.

    Thus, if $\sigma_v=j\neq i$,  there are $q-2$ choices for a non-$\{i,j\}$ colour for a child $u\prec v$, each of which contributing $Z^{\mi}_u$ on the conditional partition function on $T_u$ (since we do not add any additional monochromatic edge). 
    If $\sigma_u=j$, we instead obtain $\emm^\beta Z^{\mi}_u$. Finally, If $\sigma_u=j$, we get  $Z^{\pl}_u$. Consequently $Z^{\mi}_v = \prod_{u\prec v} \big[(q-2+\emm^\beta)Z^{\mi}_u + Z^{\pl}_u\big]$.     Analogously we have that \(Z^{\pl}_v = \prod_{u\prec v} \big[(q-1)Z^{\mi}_u + \emm^\beta Z^{\pl}_u\big]\). Combining, we obtain the equality in the lemma.
\end{proof}

Using this recurrence, we can show that for all vertices the bias (if any) is towards the boundary colour, and then, for good vertices, we establish \(\hat p \ll 1/q\). We start with the following observation.
\begin{observation}
    For $q\geq 2$ and $\beta>0$, the function $f(z) = \frac{(q-2+\emm^\beta)z + 1}{(q-1)z+\emm^\beta}$  is increasing.
\end{observation}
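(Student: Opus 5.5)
The plan is to compute the derivative directly, since $f$ is a Möbius (linear fractional) map. Write $f(z)=\frac{az+b}{cz+e}$ with $a=q-2+\emm^\beta$, $b=1$, $c=q-1$ and $e=\emm^\beta$. For any such map, on any interval avoiding the pole $z=-e/c$ (in particular on $z\ge 0$, which is the range of the ratios $Z^{\mi}_u/Z^{\pl}_u$), we have
\[
f'(z)=\frac{ae-bc}{(cz+e)^2}.
\]
So the sign of $f'$ is the sign of the determinant $ae-bc$. The statement therefore reduces to showing that this determinant is positive.

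Next I will compute the determinant:
\[
ae-bc=(q-2+\emm^\beta)\emm^\beta-(q-1)=\emm^{2\beta}+(q-2)\emm^\beta-(q-1).
\]
Factoring as a quadratic in $x=\emm^\beta$, the expression $x^2+(q-2)x-(q-1)$ has roots $x=1$ and $x=-(q-1)$. Hence
\[
ae-bc=(\emm^\beta-1)(\emm^\beta+q-1).
\]
This is strictly positive because $\beta>0$ gives $\emm^\beta>1$, and $q\ge 2$ gives $\emm^\beta+q-1>0$.

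Finally, the denominator $cz+e=(q-1)z+\emm^\beta$ is positive for every $z\ge0$. So $f'(z)>0$ on $[0,\infty)$, and $f$ is strictly increasing there. This is the domain on which the observation will be applied in the recursion of Lemma~\ref{lem:recurrence}. If one wants the statement on the whole real line away from the pole, the same formula shows that $f$ is increasing on each of the two intervals separated by $z=-\emm^\beta/(q-1)$.

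There is no real obstacle here. The only point needing care is to state the domain explicitly (nonnegative $z$), so that the claim is not misread as monotonicity across the pole.
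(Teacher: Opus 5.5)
Your proof is correct. The factorisation $(\emm^\beta-1)(\emm^\beta+q-1)>0$ of the determinant, together with positivity of the denominator for $z\ge 0$, gives strict monotonicity on $[0,\infty)$, and this covers every use in the paper, where $z=Z^{\mi}_u/Z^{\pl}_u\in[0,1]$. The paper states this as an observation without proof, and your derivative computation is the routine verification it leaves implicit.
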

We thus obtain the following.

\begin{lemma}\label{lem:bias-towards-boundary-for-all}
    For all \(v\in V(T)\), \(Z^{\mi}_v\leq Z^{\pl}_v\).
\end{lemma}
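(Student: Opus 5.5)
We argue by induction on the height of $T_v$, working upward from the boundary; Lemma~\ref{lem:recurrence} and the monotonicity of $f$ recorded in the Observation do all the work.

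For the base case, take $v$ a vertex of $T^h$ with no children in $T^h$. There are two kinds of such vertices. If $v$ is at depth exactly $h$, its colour is fixed to $\pl$ by the boundary condition. Then $\Omega^{\mi}_v$ is empty (or rather carries zero weight), so $Z^{\mi}_v=0\le 1=Z^{\pl}_v$. If instead $v$ is a leaf at depth less than $h$ (a dead-end branch), there is no constraint and no edge inside $T_v$, so $Z^{\mi}_v=Z^{\pl}_v=1$. In both cases the ratio $z_v\coloneqq Z^{\mi}_v/Z^{\pl}_v$ lies in $[0,1]$.

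For the inductive step, let $v$ be a non-leaf vertex and assume $z_u\in[0,1]$ for every child $u\prec v$. Lemma~\ref{lem:recurrence} gives
\[
z_v=\prod_{u\prec v} f(z_u).
\]
By the Observation, $f$ is increasing, so each factor satisfies $f(z_u)\le f(1)$. Direct computation shows
\[
f(1)=\frac{q-1+\emm^\beta}{q-1+\emm^\beta}=1.
\]
We also have $f(z_u)\ge f(0)=\emm^{-\beta}>0$. Hence every factor lies in $(0,1]$, and so does the product, giving $z_v\le 1$, i.e.\ $Z^{\mi}_v\le Z^{\pl}_v$.

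The only step needing any care is the base case. One must separate boundary leaves at depth $h$, where $z_v=0$, from free leaves at smaller depth, where $z_v=1$. One must also check that Lemma~\ref{lem:recurrence} is being applied only at non-leaf vertices. The recurrence is indifferent to which kind of leaf feeds into it, so no further obstacle arises.
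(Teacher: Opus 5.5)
Your proof is correct and follows the paper's argument essentially verbatim. Both induct from the leaves with the same two base cases ($Z^{\mi}_v=0$ at boundary leaves, $Z^{\mi}_v=Z^{\pl}_v$ at free leaves) and then use the recurrence of Lemma~\ref{lem:recurrence} with monotonicity of $f$ and $f(1)=1$; your extra observation $f(z_u)\ge f(0)=\emm^{-\beta}>0$ makes explicit the nonnegativity of the factors, which the paper leaves implicit when bounding the product by $1$.
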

\begin{proof}
    By induction on the distance from the leaves. For leaves $v$ of $T$ that are part of the boundary condition we have $Z^\pl_v = 1$ and $Z^\mi_v = 0$; for all other leaves $Z^\pl_v =Z^\mi_v$, so in both cases the lemma is true.  Assume the lemma holds for all \(u\prec v\) and consider the  expression for $\frac{Z^{\mi}_v}{Z^{\pl}_v}$  appearing in Lemma~\ref{lem:recurrence}. Then using that \(Z^{\mi}_u\leq Z^{\pl}_u\) for each \(u\prec v\), we have     \(f\Big(\frac{Z^{\mi}_u}{Z^{\pl}_u}\Big)\leq f(1)= 1\), so we conclude that \(\frac{Z^{\mi}_v}{Z^{\pl}_v}\leq 1\) as well.
\end{proof}

For large $q$, the next lemma shows a better upper bound of $\frac{Z^{\mi}_v}{Z^{\pl}_v}$ for all good vertices, and hence upper bounds $\hat p$.

\begin{lemmarginal}
\statelemmarginal
\end{lemmarginal}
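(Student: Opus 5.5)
We propose to prove Lemma~\ref{lem:marginal} by analysing the ratio $z_v\coloneqq Z^{\mi}_v/Z^{\pl}_v$ through the recurrence of Lemma~\ref{lem:recurrence}, showing by induction on the distance to the boundary level that every $h$-good vertex satisfies $z_v\le z^*$ for an explicit small threshold $z^*$. We then translate this into the bound on $\hat p$. The translation is direct. Conditioned on $\sigma_{\mathrm{parent}(v)}=j$, the subtree $T_v$ is independent of the rest. The probability that $\sigma_v\ne\pl$ is
\[
\frac{(q-1)z_v\,w}{(q-1)z_v\,w'+w''},
\]
where the weights $w,w',w''\in\{1,\emm^\beta\}$ account for agreement with the parent's colour. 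In the worst case ($j\ne\pl$, so one non-$\pl$ colour gets the factor $\emm^\beta$) this is at most $(q-2+\emm^\beta)z_v$. So it suffices to show $z_v\le 2q^{1-\xi}/(q-2+\emm^\beta)$, or a comparable bound, for $h$-good $v\ne r$.

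For the induction we bound $f$ on the relevant range. We have $f(0)=\emm^{-\beta}$ and $f$ is increasing, with $f(z)\le 1$ for $z\le1$ by Lemma~\ref{lem:bias-towards-boundary-for-all}. For small $z$, write
\[
f(z)\le \frac{1+(q-2+\emm^\beta)z}{\emm^\beta}=\emm^{-\beta}\bigl(1+(q-2+\emm^\beta)z\bigr).
\]
Boundary vertices have $z=0$. An $h$-good vertex $v$ at depth $<h$ has at least $D$ $h$-good children with $z_u\le z^*$ (inductively), and its other children contribute factors at most $1$. Hence
\[
z_v\le \bigl(\emm^{-\beta}(1+(q-2+\emm^\beta)z^*)\bigr)^{D}.
\]
We need the right-hand side to be at most $z^*$. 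Choose $z^*\coloneqq \emm^{-\beta}q^{1-\xi}$, roughly. Then $(q-2+\emm^\beta)z^*\le q^{2-\xi}\emm^{-\beta}+q^{1-\xi}$, which is small: the assumptions give $q^{1-\xi}\le \frac{\delta\xi}{2(1-\delta)D}\log q$, and $\emm^\beta\ge q^{\xi/((1-\delta)D)}$ so $\emm^{-\beta}q$ is controlled. The resulting bound is
\[
\emm^{-\beta D}\bigl(1+O(q^{1-\xi})\bigr)^D\le q^{-\xi/(1-\delta)}\exp\bigl(O(Dq^{1-\xi})\bigr),
\]
and the factor $\exp(O(Dq^{1-\xi}))\le q^{\delta\xi/(2(1-\delta))}$ by the hypothesis on $q^{1-\xi}$. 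This yields $z_v\le q^{-\xi(1-\delta/2)/(1-\delta)}\le q^{-\xi}$, so we should take $z^*=q^{-\xi}$ (which is slightly cleaner than the first guess). Closing the induction then requires checking that $(q-2+\emm^\beta)q^{-\xi}=O(q^{1-\xi})$ when $\emm^\beta\lesssim q$. When $\emm^\beta\ge q$, we instead use $(q-2+\emm^\beta)z\le 2\emm^\beta z$, so that $f(z)\le \emm^{-\beta}+2z$, and handle this case separately with a simpler argument.

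Finally, for $v\in V^h_{\good}\setminus\{r\}$ we get $\mu(\sigma_v\ne\pl\mid\sigma_{\mathrm{parent}(v)}=j)\le (q-2+\emm^\beta)z_v$, and we must show this is $\le 2q^{1-\xi}$. For $\emm^\beta\le q$ this follows from $z_v\le q^{-\xi}$. For large $\emm^\beta$ it follows from $z_v\lesssim \emm^{-\beta D}$, using $D\ge2$, so that $\emm^\beta z_v\le \emm^{-\beta(D-1)}\ll q^{1-\xi}$.

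The main obstacle is the bookkeeping in the self-consistent step: choosing $z^*$ so that the linear term $(q-2+\emm^\beta)z^*$ is simultaneously small enough for $(1+\cdot)^D$ to cost only $q^{O(\delta)}$, uniformly over both regimes of $\beta$. We would split into the cases $\emm^\beta\le q$ and $\emm^\beta>q$ and run the induction separately in each.
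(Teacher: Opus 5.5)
Your proof is correct and follows the paper's skeleton: the ratio recurrence for $z_v=Z^{\mi}_v/Z^{\pl}_v$, induction over $h$-good vertices with $f\le 1$ for the other children, the bound $f(z)\le \emm^{-\beta}\bigl(1+(q-2+\emm^\beta)z\bigr)$, and the translation $\hat p\le (q-2+\emm^\beta)z_v$. What differs is the inductive invariant.

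The paper uses the $\beta$-dependent threshold $z^*=\emm^{-\beta D(1-\delta)}$. For it, both $qz^*\le q^{1-\xi}$ and $\emm^\beta z^*\le q^{1-\xi}$ hold (the latter via $(1-\delta)D\ge\xi$), and the hypothesis on $q$ is used in the form $q^{1-\xi}\le \delta\beta/2$. Each good child then contributes $f(z^*)\le \emm^{-\beta}(1+\delta\beta)\le \emm^{-\beta(1-\delta)}$. This covers every $\beta$ at once, and the conclusion is simply $(q+\emm^\beta)z^*\le 2q^{1-\xi}$.

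Your threshold $z^*=q^{-\xi}$ uses the hypothesis in its native form $2Dq^{1-\xi}\le \frac{\delta\xi}{1-\delta}\log q$ to absorb the $D$-fold product. Because it does not shrink with $\beta$, it forces your case split.

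Some bookkeeping needs correcting:
\begin{itemize}
\item For $\emm^\beta\le q$ the linear term is $(q-2+\emm^\beta)q^{-\xi}<2q^{1-\xi}$. So the product factor is $q^{\delta\xi/(1-\delta)}$, not $q^{\delta\xi/(2(1-\delta))}$. The induction therefore closes with no slack: $z_v<q^{-\xi}$, and then $\hat p<(2q-2)q^{-\xi}<2q^{1-\xi}$. This works only because you split at exactly $\emm^\beta=q$.
\item Your first guess $z^*=\emm^{-\beta}q^{1-\xi}$ must be discarded. Near the bottom of the $\beta$-range, $\emm^\beta$ can be as small as $q^{\xi/((1-\delta)D)}$, so $q^{2-\xi}\emm^{-\beta}$ is not small.
\item For $\emm^\beta>q$ you should state an explicit invariant, e.g.\ $z^*=(2\emm^{-\beta})^D$. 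It closes because $2\emm^\beta z^*=2^{D+1}\emm^{-\beta(D-1)}\le 2^{D+1}q^{-(D-1)}\le 1$. It then gives $\hat p\le 2\emm^\beta z_v\le 2^{D+1}q^{-(D-1)}\le 2q^{1-\xi}$. Both steps hold once $q\ge 4$ and $D\ge 2\xi$, which the hypothesis $q^{1-\xi}\le \frac{\delta\xi}{2(1-\delta)D}\log q$ forces for large $d$. Alternatively, just use the paper's invariant in this regime.
\end{itemize}
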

\begin{proof}
Take  $d$ large enough so that $D(1-\delta)\geq \xi$ so that $\frac{\xi-1}{D(1-\delta)-1}\leq \frac{\xi}{D(1-\delta)}$. Assume that $q^{1-\xi} \leq \frac{\delta \xi}{2(1-\delta)D}\log q$ and note that the lower bound on $\beta$ implies that $q^{   1-\xi}\leq \beta \delta/2$ and $\emm^{\beta(1-D(1-\delta))}\leq q^{1-\xi} \leq \beta \delta/2$.

We first show by induction that for all good vertices $v$ it holds that $\frac{Z^{\mi}_v}{Z^{\pl}_v}\leq \emm^{-\beta D(1-\delta)}$. The bound is trivially true for leaves. Then, suppose \(v\) is a good vertex that is not a leaf and assume the bound is true for all the good children of \(v\). By Lemmas~\ref{lem:recurrence} and~\ref{lem:bias-towards-boundary-for-all},  using that $f(z) = \frac{(q-2+\emm^\beta)z + 1}{(q-1)z+\emm^\beta}$ satisfies $f(z)\leq f(1)=1$ for $z\leq 1$, we obtain
\begin{align*}
      \frac{Z^{\mi}_v}{Z^{\pl}_v} =\prod_{u\prec v} f\Big(\frac{Z^{\mi}_u}{Z^{\pl}_u}\Big)=&  \prod_{\substack{u\prec v \\ u\text{ good}}}f\Big(\frac{Z^{\mi}_u}{Z^{\pl}_u}\Big)\prod_{\substack{u\prec v \\ u\text{ bad}}}f\Big(\frac{Z^{\mi}_u}{Z^{\pl}_u}\Big) 
      \leq \prod_{\substack{u\prec v \\ u\text{ good}}} f(\emm^{-\beta D(1-\delta)}).
\end{align*}
Since $v$ is a good vertex, it has at least \(D \) good vertices amongst its children, so to finish the induction it suffices to show that  \(f(\emm^{-\beta D(1-\delta)})\leq \emm^{-\beta (1-\delta)}\). We have
\begin{equation}\label{eq:rf4rw1232}
f(\emm^{-\beta D(1-\delta)}) = \frac{(q-2+\emm^\beta)\emm^{-\beta D(1-\delta)} + 1}{(q-1)\emm^{-\beta D(1-\delta)} + \emm^\beta} \leq \frac{(q+\emm^\beta)\emm^{-\beta D(1-\delta)}+1}{\emm^\beta}.
\end{equation}
Using the assumption \(\emm^\beta\geq q^{\frac{\xi}{(1-\delta)D}}\) and the choice of $d,q$ at the start of the proof we have
\begin{equation}\label{eq:5gbb55}
   q\emm^{-\beta D(1-\delta)}\leq q^{1-\xi} \leq \delta\beta /2 \ \mbox{ and }\ 
    \emm^\beta\emm^{-\beta D(1-\delta)}=\emm^{\beta(1-D(1-\delta))}\leq  q^{1-\xi}\leq  \delta \beta/2.
\end{equation}
Taking the sum, we have $(q+\emm^\beta)\emm^{-\beta D(1-\delta)}\leq \delta\beta$, so we bound the r.h.s. in \eqref{eq:rf4rw1232}  by $\tfrac{\delta \beta+1}{\emm^{\beta}}\leq \tfrac{\emm^{\beta\delta}}{\emm^{\beta}}\leq \emm^{-\beta(1-\delta)}$, as needed.

To conclude the desired bound on $\hat p$, note first that the partition function for the Potts on $T_v$ is at least $Z^{\pl}_v$. Moreover,  when \(\sigma_{\mathrm{parent}(v)}=j\neq \pl\) we have \(q-2\) colours for $v$ other than $\pl$ that do not match $j$, and a single way to match the colours $j$ of the parent, contributing  a factor of \(\emm^\beta\) (because of the monochromatic edge). We therefore get:
\[
\hat p \leq \sup_{v\in V_\mathrm{good}} \frac{(q-2+\emm^\beta)Z^{\mi}_v}{Z^{\mi}_v}\leq \emm^{-\beta(1-\delta)D} (\emm^\beta + q)\leq 2q^{1-\xi},
\]
where in the last inequality we used the bounds in \eqref{eq:5gbb55}. 
\end{proof}

\subsection{Proof for local mixing via centroid decomposition}
\label{sec:local mixing}

In this section, we prove Lemma~\ref{lem:localspectral} using a standard block dynamics root-to-leaf reduction. To compensate for the fact that the sum of degrees might be large, we use a centroid decomposition, allowing for every induction path to be short (of order $O(\log\log n)$). A similar approach was used by Chen \cite{doi:10.1137/1.9781611977912.179} by utilising analogous structural decompositions based on tree-width and planarity. For completeness we give a more direct proof in the present setting. For a tree $T$, we also write $|T|$ as shorthand for $|V(T)|$.

 In order to prove Lemma \ref{lem:localspectral}, it will be convenient to work with the discrete-time and continuous-time versions of the single-site dynamics and the block dynamics. 
In the discrete-time dynamics, a vertex (respectively, block) is chosen u.a.r.{} at each step, and the corresponding heat-bath update is performed.
In the continuous-time dynamics, each vertex (respectively, block) is equipped with an independent rate-one Poisson clock and whenever its clock rings, the corresponding heat-bath update is performed. 
We use the following notation.
\begin{definition}
Given a graph $G = (V, E)$ and a set of vertices $B \subseteq V$, we define the \textit{vertex boundary} $\partial_{\mathrm{vert}} B$ to be the 
set containing all vertices in  $V \setminus B$ that are adjacent to some vertex in $B$.
Let $\trelct^{\mathrm{block}}$ 
be the relaxation time of the continuous-time block dynamics and let
$\trelct$ be the relaxation  time of the continuous-time single-site dynamics. For each block $B \in \mathcal{B}$ let $\trelct(B)$ be the relaxation time of the continuous-time single site dynamics on $B$, given any arbitrary condition on $\partial_{\mathrm{vert}} B$ and let $\trel(B)$ be the relaxation time of 
the discrete-time single site dynamics on $B$, given an arbitrary boundary condition on $\partial_{\mathrm{vert}} B$.
\end{definition}
We will use the following comparison result from \cite{Martinelli1999}, stated in the form appearing in~\cite{doi:10.1137/1.9781611975031.115}.

\begin{lemma}[{\cite[Proposition 3.4]{Martinelli1999}}, {\cite[Proposition 57] {doi:10.1137/1.9781611975031.115}}]\label{prop:block_rec_comp}
Let $G = (V, E)$ be a graph. Let $(X_t)_{t \in \mathbb{N}}$ be the continuous-time block dynamics, with set of blocks $\mathcal{B}$, where each vertex $v \in V$ belongs to $Q_v$ different blocks. Let $(Y_t)_{t \in \mathbb{N}}$ be the continuous-time single-site dynamics on $G$.  
Then   $\trelct \leq \trelct^{\mathrm{block}} \cdot \big(\max_{B \in \mathcal{B}} \trelct(B) \big) \cdot \big(\max_{v \in V} Q_v\big)$.
\end{lemma}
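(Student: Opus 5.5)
The plan is to argue through Dirichlet forms (variational characterisations of the relaxation times), in the same spirit as the factorisation arguments in Section~\ref{sec:spectralgap}. For a heat-bath dynamics in continuous time with rate-one clocks, the Dirichlet form is a sum of expected conditional variances. For the single-site dynamics on $G$ this is $\mathcal{E}(f)=\sum_{v\in V}\E[\Var^{\circ}_v(f)]$. For the block dynamics with blocks $\mathcal{B}$ it is $\mathcal{E}^{\mathrm{block}}(f)=\sum_{B\in\mathcal{B}}\E[\Var^{\circ}_B(f)]$. Each heat-bath update is the orthogonal projection onto functions not depending on the updated coordinates, so $\langle f,(I-\E^\circ_B)f\rangle=\E[\Var^\circ_B f]$. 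The relaxation time is then the best constant $t$ in $\Var(f)\le t\,\mathcal{E}(f)$ for all $f$; all dynamics involved are reversible with respect to the relevant (conditional) Gibbs measure, so this characterisation applies.

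The first step is to write the block Poincaré inequality: for every $f$,
\[
\Var(f)\le \trelct^{\mathrm{block}}\sum_{B\in\mathcal{B}}\E\big[\Var^\circ_B(f)\big].
\]
The second step applies, for each block $B$ and each boundary condition $\tau$ on $\partial_{\mathrm{vert}}B$ (and hence on all of $V\setminus B$, by the Gibbs property of footnote~\ref{fn:Gibbs}), the Poincaré inequality of the continuous-time single-site dynamics on $B$:
\[
\Var^\tau_B(f)\le \trelct(B)\sum_{v\in B}\E^\tau_B\big[\Var^\circ_v(f)\big].
\]
Here one should note that conditioning on $\tau$ outside $B$ and then on the rest of $B\setminus\{v\}$ is the same as conditioning on $V\setminus\{v\}$, so the inner term is exactly the global single-site conditional variance. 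Taking expectation over $\tau$ and using the tower property $\E\,\E^\circ_B=\E$ gives
\[
\E[\Var^\circ_B f]\le \max_{B'}\trelct(B')\sum_{v\in B}\E[\Var^\circ_v f].
\]

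Finally, I would sum over $B\in\mathcal{B}$ and exchange the order of summation. Each vertex $v$ appears in exactly $Q_v$ blocks, so
\[
\sum_{B}\sum_{v\in B}\E[\Var^\circ_v f]=\sum_v Q_v\,\E[\Var^\circ_v f]\le \big(\max_v Q_v\big)\,\mathcal{E}(f).
\]
Chaining the three inequalities yields $\Var(f)\le \trelct^{\mathrm{block}}\cdot\max_B\trelct(B)\cdot\max_vQ_v\cdot\mathcal{E}(f)$, which is the claim. The only delicate point is the bookkeeping of time normalisations: one must check that continuous-time rate-one clocks make the Dirichlet forms exactly the unnormalised sums above, with no $1/|V|$ or $1/|\mathcal{B}|$ factors. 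The discrete-time versions would introduce such factors, which is why the lemma is phrased in continuous time.
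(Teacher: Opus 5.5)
Your proof is correct. It is essentially the standard argument: the paper does not prove this lemma itself but cites Martinelli's Proposition~3.4, whose proof chains the same three steps. These are the block Poincar\'e inequality, the single-site Poincar\'e inequality inside each block (uniform over boundary conditions, with the inner conditional variances identified with the global ones), and the overlap count $\max_{v} Q_v$ when the sums are exchanged, with rate-one clocks keeping both Dirichlet forms free of $1/|V|$ or $1/|\mathcal{B}|$ factors as you note.
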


\begin{proof}[Proof of Lemma~\ref{lem:localspectral}]
	Let $\trelct(T')$ denote the relaxation times of the continuous-time single-site Glauber dynamics on $T'$. We will show that  $\trelct(T') \leq q^{O(\log N)} \emm^{O(\beta W)}$. Our claimed lower bound on the spectral gap of the discrete-time dynamics follows by noting that the relaxation time of the discrete-time dynamics is related to that of the continuous-time dynamics by $O(N) \cdot \trelct(T')$.

	We  will construct a block decomposition of $T'$.   A \textit{centroid} of a tree $H$ is a vertex $c$ such that, in the forest obtained from $H$ by deleting $c$, every component has size at most $|H| / 2$. It is well known that every tree has either one or two centroids (see e.g. \cite{harary1969graph}). If $T'$ is a single vertex, then $\trelct(T') = 1$ and we are done. Otherwise, fix an arbitrary centroid $c_0 $ in $T'$, and let $T^{(1)}, \dots, T^{(d_0)}$ be the components of $T'$ with $c_0$ deleted, where $d_0 \leq \deg_T(c_0)$. To ease the notation, let $T^{(0)}$ be the tree consisting of the single vertex~$c_0$.
		
	Consider the discrete-time block dynamics with blocks $\mathcal{B} = \left\{\{c_0\},V(T^{(1)}), \dots, V(T^{(d_0)})\right\}$.  To see that the mixing of this dynamics is at most $q^{O(1)} \emm^{O(\beta d_0)}$, consider a coupling of two copies of this dynamics, each making the same choices about which  blocks to update. At any step, the probability that block~$\{c_0\}$ is chosen next is $1/(d_0+1)$. If it is, then the probability that the spin at~$c_0$ is coupled is at least $q^{-O(1)} \emm^{-O(\beta d_0)}$. If this happens, then the probability all other blocks are chosen before $\{c_0\}$ is chosen again is $1/(d_0+1)$ and, if so, by the coupon collector argument, all of the other blocks are likely to be selected (and coupled) within $O(d_0 \log d_0)$ steps. 
		
	The relaxation time of the continuous-time variant of this block dynamics is at most the mixing time of the discrete-time variant, which is $q^{O(1)} \emm^{O(\beta d_0)}$. By Lemma~\ref{prop:block_rec_comp}, it follows that $\trelct(T') \leq q^{O(1)} e^{O(\beta d_0)} \max_{0 \leq i \leq d_0} \trelct(T^{(i)})$. 
		
	Similarly, for each $i \in [d_0]$, fix a centroid $c_{0, i}$ in $T^{(i)}$, and let $T^{(i, 1)}, \dots, T^{(i, d_{0, i})}$ be the components of $T^{(i)}$ with $c_{0, i}$ deleted, where $d_{0, i} \leq \deg_{T}(c_{0, i})$.  By Lemma~\ref{prop:block_rec_comp}, $\trelct(T^{(i)} \leq q^{O(1)} \emm^{O(\beta d_{0,i})} \max_{0 \leq j \leq d_{0, i}} \trelct(T^{(i,j)})$. Combining, we obtain the following: \[\trelct(T') \leq \max_{0 \leq i \leq d_0} \max_{0 \leq j \leq d_{0, i}} q^{O(1)} \emm^{O(\beta (d_0 + d_{0,i}))} \trelct(T^{(i,j)}).\]
		
	We continue this procedure recursively, where a particular path $\p = (i_1, i_2, \dots)$ of the process terminates when the component $T^{(\p)}$ is a single vertex, in which case $\trelct(T^{(\p)})  = 1$. Let $\mathcal{P}$ be the set of all ``root-to-leaf'' paths in the process and for all $\p \in \mathcal{P}$, let $\mathcal{C}(\p)$ be the set of centroids chosen along the path $\p$. We therefore obtain that
	\[\trelct(T') \leq \max_{\p \in \mathcal{P}} q^{O(|\mathcal{C}(\mathbf{p})|)}\emm^{O(\beta \deg_T (\mathcal{C}(\mathbf{p})))}.\]
		
	Since, upon deletion of a centroid from a tree, the maximum component size is at most half the size of the tree,  the process continues for at most $\lceil \log_2 N \rceil$ iterations, i.e. every path $\p \in \mathcal{P}$ corresponds to a collection $\mathcal{C}(\p)$ of at most $\lceil \log_2 N \rceil$ vertices in $T'$. Since for every subset of vertices with $|S| \leq \lceil \log_2 N \rceil$ it holds that $\deg_T (S) \leq W$, it follows that $\trelct(T') \leq q^{O(\log N)} \emm^{O(\beta W)}$, as required.  
\end{proof}

\subsection{Proof of Lemma~\ref{lem:log-sob-RC-dynamics}}\label{sec:Paulina}
In this section, we prove Lemma~\ref{lem:log-sob-RC-dynamics}. We will need the following lemma that captures the tree-like properties of $G(n,d/n)$.
\begin{lemma}[{\cite[Lemma 2.2]{BlancaGheissari2023}}]\label{lem:treelike}
There is  $L>0$ such that for all real $d>1$ and $n$ sufficiently large the following holds. Let $h=\lfloor\tfrac{1}{5}\log_d n\rfloor$.

With probability $1-1/n^{\Omega(1)}$ over the choice of $G(n,d/n)$,
for any vertex $v$, the $h$-step neighbourhood around $v$ can be made into a tree by removing at most $L$ edges.
\end{lemma}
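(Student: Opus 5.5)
The plan is a first-moment (union bound) argument over small "witness" subgraphs of excess larger than $L$, which is the standard way of showing that balls in $G(n,d/n)$ are nearly trees. For a connected graph $H$, write $\mathrm{exc}(H)=|E(H)|-|V(H)|+1$. This is exactly the minimum number of edges whose removal makes $H$ a tree. So the failure event is that some ball $B_{v,h}$ has $\mathrm{exc}(B_{v,h})\ge L+1$. I will show that this event forces a small witness subgraph to exist, and then bound the expected number of such witnesses.

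\emph{Step 1 (witness extraction).} Suppose $\mathrm{exc}(B_{v,h})\ge L+1$. Fix a BFS tree $\mathcal T$ of $B_{v,h}$ from $v$, and pick $L+1$ non-tree edges $e_1,\dots,e_{L+1}$. Each endpoint of each $e_i$ is joined to $v$ by a geodesic in $\mathcal T$ of length at most $h$. Let $H$ be the union of these at most $2(L+1)$ geodesics together with the edges $e_i$. Then $H$ is connected and contains $v$. Its $\mathcal T$-part is a subtree, so $|E(H)|=|V(H)|-1+(L+1)=|V(H)|+L$. Its size satisfies $k\coloneqq|V(H)|\le 2(L+1)h+1$. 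Moreover, $H$ is a tree with at most $2(L+1)$ leaves plus $L+1$ extra edges, so it has at most $O(L)$ vertices of degree $\neq 2$.

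\emph{Step 2 (counting and first moment).} Fix an unlabelled shape for $H$, that is, its at most $O(L)$ branch and end vertices and the $O(L)$ path segments between them. The number of ways to choose the segment lengths, each at most $k$, is $k^{O(L)}$. The number of shapes is $O_L(1)$. Given the shape and the lengths, the number of labellings by vertices of $[n]$ is at most $n^k$. Each such labelled graph appears in $G(n,d/n)$ with probability $(d/n)^{k+L}$. Hence the expected number of witnesses is at most
\[
\sum_{k\le 2(L+1)h+1} O_L(1)\,k^{O(L)}\, n^{k}\Big(\frac dn\Big)^{k+L}\;\le\; (\log n)^{O(L)}\, n^{-L}\, d^{2(L+1)h+1+L}.
\]
With $h=\lfloor\frac15\log_d n\rfloor$ we have $d^{2(L+1)h}\le n^{\frac25(L+1)}$, so the bound is $n^{-L+\frac25(L+1)+o(1)}$. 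Taking, for example, $L=2$ gives $n^{-4/5+o(1)}$, which is $n^{-\Omega(1)}$. The count already ranges over all choices of the centre $v$, since $v$ is one of the labelled vertices, so no further union bound is needed. Markov's inequality then completes the argument. Note that $L$ is independent of $d$, as required.

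\emph{Main obstacle.} The delicate step is Step 1 together with the shape count. The witness must have excess exactly $L+1$, so that the factor $n^{-L}$ appears, while having only $O(L)$ vertices of degree $\neq 2$, so that the enumeration costs only $k^{O(L)}=(\log n)^{O(L)}$ rather than something exponential in $k$. Using BFS geodesics from the common centre $v$ ensures both properties. One also has to check that a union of tree paths plus $L+1$ chords has exactly the stated edge count; this holds because the geodesics all lie in the single tree $\mathcal T$.
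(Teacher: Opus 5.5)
Your proof is correct. The paper does not prove this statement: it quotes it from \cite[Lemma~2.2]{BlancaGheissari2023} and uses it as a black box. So there is no in-paper route to compare against, and your argument works as a self-contained replacement.

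The other standard route is a BFS exploration from each $v$. One first proves a tail bound $|B_{v,h}|\le n^{1/5+o(1)}$ uniformly in $v$. Given the exploration history, each of the at most $|B_{v,h}|^2$ potential back-edges is then present with probability at most $d/n$, so the excess is dominated by a binomial with mean $n^{-3/5+o(1)}$. This gives a per-vertex bound $n^{-\frac35(L+1)+o(1)}$, followed by a union bound over the $n$ centres.

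Your witness argument needs neither the ball-size estimate nor that union bound, because the witness has at most $2(L+1)h+1$ vertices however large, or however high-degree, the ball is. The cost is moved into the enumeration, and there your kernel count is essential. The set $K$ of vertices of degree $\neq 2$ has degree sum $2(L+|K|)$, so the kernel is a multigraph with $O(L)$ vertices and edges. That gives the count $O_L(1)\,k^{O(L)}$.

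A cruder count of connected graphs on $k$ vertices with $k+L$ edges, namely $k^{k-2}k^{O(L)}$, would leave an extra factor $\mathrm{e}^{k}$. For $k$ up to $2(L+1)h$ this is as large as $n^{2(L+1)/(5\ln d)}$. For $d$ close to $1$ this makes the exponent of $n$ positive for every $L$, so the bound would fail outright rather than merely letting $L$ depend on $d$.

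Your computation also gives slightly more than you state. Already $L=1$ yields $n^{-1+\frac45+o(1)}=n^{-1/5+o(1)}$, the same rate the exploration route gives.
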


\begin{proof}[Proof of Lemma~\ref{lem:log-sob-RC-dynamics}]
    Let $\hat d = d + \tfrac{d^2}{n}$ and $L$ as in Lemma~\ref{lem:treelike}. Fix $\theta > 1$ and let $d$ be large enough for Lemma~\ref{lem:conversion} to hold with $K = \tfrac{1}{5}$, $\kappa = 2$. Assume $n\geq d^2$ and $q \geq (d+1)^\theta \geq \hat d^\theta$ be integers and $\beta > (1+\tfrac{1.01}{\theta})\tfrac{\log q}{d}$.

    By Lemma~\ref{lem:conversion}, with probability $1 - 1/n^2$, the log-Sobolev constant on the Poisson tree $T^h$ with parameter $\hat d$ truncated to the first $\lfloor\tfrac15\log_d n\rfloor$ levels is $\geq \frac{1}{|E(T^h)|n^{o(1)}}$. Note that while the lemma (and the other results in this work) are stated for $d$ constant, one can easily verify that our results hold for $\leq \hat d(n)$, which is bounded by a constant (in our case $d\leq \hat d(n)\leq d + 1$).

    Furthermore by Lemmas~\ref{lem:Poissonneig} and~\ref{lem:treelike} and union bound, with probability $1-n^{-1/6}$ over $G\sim G(n,d/n)$, there are $n$ (not necessarily independent) $\Poisson(\hat d)$ trees truncated at level $h$, $T_{1}^h,\dots,T_n^h$ such that for all $v\in [n]$, $B_{v,h}$ is $T_v^h$ up to modifying at most $10 + L$ additional edges.

    Thus in particular, with probability $1-n^{-\Omega(1)}$, for every $v$, there is a set of at most $10 + L$ edges $H_v$, such that
    $B_{v,h} \oplus H_v$ is exactly $T_v^h$, and the log-Sobolev constant for the wired RC dynamics on $T_{v}^h$ is $\geq 1/(|E(T_v^h)|n^{o(1)})$.

    We next outline how to obtain the same asymptotic bound for the log-Sobolev constant for the wired RC dynamics on $B_{v,h}$ using standard lifting techniques (see also~\cite{Gheissari2022, BlancaGheissari2023}).

    We compare with yet another RC dynamics on a disjoint union of the $T_{v}^h$ with the wired boundary condition and disjoint free edges $H_v$ (i.e. we consider each edge of $H_v$ to be in a separate component). The Gibbs distribution in this setting is a product of the distribution of $\pi^{\pl}_{T^h;q,\beta}$ and the distributions of the individual free edges. It is well-known that entropy tensorises over product spaces, thus also the log-Sobolev constant of this new RC dynamics is at least $\min\{\Omega(1), 1/(|E(T_i^h)|n^{o(1)})\} =  1/(|E(T_i^h)|n^{o(1)})$. 

    Then we note that flipping a state of an edge changes the weight of the configuration by at most a factor of $q\emm^\beta$. Since $|H_v|\leq 10 + L$, we have that the marginals of any $F\subseteq E(B_{v,r})$ in the Gibbs distribution on the wired ball $B_{v,r}$ and that on the new product graph are within $(q\emm^{\beta})^{O(L)}$ multiplicative factor apart, and so are the transition probabilities of the corresponding RC dynamics.
    Consequently, the Dirichlet forms of both chains are a constant factor apart. Using the comparison result from~\cite[Theorem 4.1.1.]{Saloff-Coste1997LecturesChains}, we obtain that the Log-Sobolev constant of the wired ball $B_{r,v}$ is at least $\Omega\left(\tfrac{1}{|E(T_i^h)| n^{o(1)}}\right) = \frac{1}{|E(B_{r,v})| n^{o(1)}}$ (where the additional constant depends only on $q$ and $\beta$), which concludes the proof.
\end{proof}

\end{document}